%%%%%%%%%%%%%%%%%%%% version submitted to mprf %%%%%%%%%%%%%%%%%%%%%%%%%%%%%%%%%%%%%%%

\documentclass[a4paper,11pt]{article}

\usepackage[utf8]{inputenc}
\usepackage[english]{babel}
\usepackage[T1]{fontenc}

\usepackage[pdftex]{graphicx}
\usepackage{setspace}
\usepackage{indentfirst}

\usepackage{courier} % usar fonte Adobe Courier
\usepackage[font=small,format=plain,labelfont=bf,up,textfont=it,up]{caption}
\usepackage[usenames,svgnames,dvipsnames]{xcolor}
\usepackage[a4paper,top=2.54cm,bottom=2.0cm,left=2.0cm,right=2.54cm]{geometry} % margens
\usepackage[pdftex,plainpages=false,pdfpagelabels,colorlinks=true,citecolor=black,linkcolor=black,urlcolor=black,filecolor=black,bookmarksopen=true]{hyperref}

\usepackage[round,sort,nonamebreak]{natbib} % citação bibliográfica textual(plainnat-ime.bst)

% ---------------------------------------------------------------------------- %
\graphicspath{{./figuras/}} % caminho das figuras (recomendável)
\frenchspacing  % arruma o espaço: id est (i.e.) e exempli gratia (e.g.) 
\urlstyle{same} % URL com o mesmo estilo do texto e não mono-spaced
\raggedbottom   % para não permitir espaços extra no texto
\fontsize{60}{62}\usefont{OT1}{cmr}{m}{n}{\selectfont}
\normalsize

%\baselineskip{22pt}

\usepackage{amsmath}
\usepackage{amssymb}
\usepackage{amsthm}
\usepackage{mathrsfs}
\numberwithin{equation}{section} %equation numbering within sections

%prevenir troca de linha em formulas inline
\relpenalty=9999
\binoppenalty=9999

\newcommand{\AAA}{{\mathcal{A}}}

\newcommand{\CC}{{\mathcal{C}}}
\newcommand{\DD}{{\mathcal{D}}}

\newcommand{\FF}{{\mathcal{F}}}

\newcommand{\GG}{{\mathcal{G}}}

\newcommand{\R}{{\mathbb{R}}}
\newcommand{\E}{{\mathbb{E}}}
\newcommand{\Prob}{{\mathbb{P}}}
\newcommand{\N}{{\mathbb{N}}}

\newcommand{\Nz}{{\mathbb{S}}}
\newcommand{\Nzb}{{\bar{\Nz}}}%{{\widebar{\Nz}}}

\newcommand{\qc}{{\emph{a.s.}} }
\newcommand{\ind}{{\mathbb{I}}}

\newtheorem{teorema}{Theorem}[section]
\newtheorem{lema}[teorema]{Lemma}
\newtheorem{proposicao}[teorema]{Proposition}
\newtheorem{corolario}[teorema]{Corollary}
\newtheorem{observacao}[teorema]{Remark}
\newtheorem{definicao}{Definition}[section]

\title{Elementary results on K processes with weights}
\author{Luiz Renato Fontes \footnote{IME-USP, Rua do Mat\~ao 1010, 05508-090
S\~ao Paulo SP,  Brazil, lrenato@ime.usp.br} \thanks{Partially
supported by CNPq grant 305760/2010-6, and FAPESP grant 2009/52379-8} 
\and Gabriel R.~C.~Peixoto \footnote{IME-USP, Rua do Mat\~ao 1010, 05508-090
S\~ao Paulo SP,  Brazil, gabrielp@ime.usp.br} \thanks{Supported by 
CNPq fellowships 131984/2009-8, 140177/2012-4}}
\date{\today}

\begin{document}

\onehalfspacing

%\singlespacing

%\baselineskip=24pt 

\maketitle

 \begin{abstract} 
 
We introduce the title process via a particular construction, and relate it
to processes previously studied, in particular a process introduced by G.~E.~H.~Reuter
in 1969. We derive elementary properties and quantities of this processes: Markov
property, transition rates, stationary distribution, and the infinitesimal generator 
for a case not treated by Reuter.

 \end{abstract}

\noindent AMS 2010 Subject Classification: {60J27, 60J35} 

\smallskip

\noindent Keywords and Phrases: {K processes, trap models, Markov property, transition rates, 
stationary distribution, infinitesimal generator}

\section{Introduction}

The aim of this paper is to introduce and analyse a class of Markov processes 
on a countable state space, which we will call K processes. This terminology has 
been used before in~\cite{fontes:08} for a subclass of the class we consider
here. In comparison, we might call the processes of the present paper 
K processes with weights, and the processes previously studied in that reference,
K processes with uniform weights.

Let us start with a tentative description of our process. Let $\Nz$ 
denote a countably infinite set --- our tentative state space. We want our process
to do the following: when at $x\in\Nz$, it waits an exponential time 
of mean $\gamma_x$, and then jumps to site $y\in\Nz$ with probability
proportional to $\lambda_y$, where $\Lambda:=\{\lambda_x,\,x\in\Nz\}$ 
is a set of weights. This would make immediate sense once we assumed 
that the sum of the weights in $\Lambda$ --- the total weight of $\Nz$ 
--- is finite.

But we make the opposite assumption, namely that the total weight of $\Nz$ 
is infinite. This is natural in the context of scaling limits
of trap models --- more about those models below. Since $\Nz$ is infinite, that 
assumption will then force a condition on the mean waiting times 
$\{\gamma_x,\,x\in\Nz\}$, and we further need to introduce an extra point in the
state space.

A more precise, even if still rough description of our process is as follows. 
Let $\infty$ denote the above mentioned extra point. Our process lives on
$\Nzb=\Nz\cup\{\infty\}$. When at $x\in\Nz$, it waits an exponential time 
of mean $\gamma_x$ and then jumps to $\infty$, which is an instantaneous state. 
Starting at $\infty$, the process enters finite sets $F\in\Nz$ with distribution 
proportional to $\{\lambda_x,\,x\in F\}$. We assume our parameter set 
$\{\gamma_x,\lambda_x,\,x\in F\}$ satisfies $\gamma_x,\lambda_x>0$
for each $x\in\Nz$ and
\begin{equation}
  \label{eq:soma-lambda-gamma}
  \sum_{x \in \Nz} \lambda_x = \infty,\quad
  \sum_{x \in \Nz} \lambda_x \gamma_x < \infty.
\end{equation}
There is a further parameter $c\geq0$ related to the size of the $\infty$-set 
of the process (i.e., the set of times that the process spends visiting $\infty$).
A precise definition/construction of the process will be given in the next section.

The above mentioned connection with trap models is as follows. 
K processes arise as scaling limits of those models under suitable scaling.
This was established in~\cite{fontes:08} for the (symmetric) trap model on the 
complete graph, where the {\em uniform weight} case $\lambda_x\equiv1$ was 
introduced and studied. More recently, the K process with a particular set 
of weights was introduced and shown to be the scaling limit of
{\em asymmetric} trap models on the complete graph in~\cite{bezerra:12}. It also
appears as a scaling limit of the trap model studied in~\cite{jara:12}.
All the K processes showing up in this context have $c=0$. 

The $c>0$ also has a history, albeit a different, older one. The uniform weight
case corresponds to the famous K1 example of Kolmogorov (introduced in~\cite{kolmogorov:51})
of a process in a countable state space with an instantaneous state. 
This example has prompted many studies since its appearance, in particular~\cite{kendall:56}, 
where this example was further analysed, and~\cite{reuter:69},
where the weighted extension was introduced
and analysed. These studies are analytical ones, defined via $Q$-matrices. 
In particular, one common interest is in the derivation of the infinitesimal generators.

What we do in this paper is to introduce the K process via a particular 
construction in Section~\ref{cons}. We establish the Markov property in Section~\ref{markov}, 
with Section~\ref{truncation} dedicated to an auxiliary truncated process. 
Transition rates and stationary distribution are obtained in Section~\ref{trans} 
--- these derivations are absent in~\cite{fontes:08} and~\cite{bezerra:12}). 
It follows from the analysis performed in Section~\ref{trans}
that ours is a construction of the process analytically introduced in~\cite{reuter:69};
as far as we know, such a construction was missing in the general weighted case.
Finally, in Section~\ref{ig}, we compute the infinitesimal generator in the 
case $c=0$; the $c>0$ case, a considerably different one, was done in~\cite{reuter:69}.

We close this introduction with a word about description via Dirichlet forms. 
\cite{fontes:08} take this point of view for the definition and analysis of the 
uniform weight K process (besides also the constructive approach employed in the 
present paper). 
This is also possible in the general weighted case, but we choose not to exploit 
it in the present paper.

\section{Construction}\label{cons}

The state space is a countably infinite set $\Nzb = \Nz \cup \{\infty\}$, in
which $\infty$ is a symbol not in $\Nz$. The parameters of the
K process are a family of strictly positive real numbers
$\{\gamma_x, \lambda_x: x \in \Nz\}$ as well as a constant $c \geq 0$.
We will call $\{\gamma_x: x \in \Nz\}$ {\em waiting time} parameters, and
$\{ \lambda_x: x \in \Nz\}$ the {\em weights} of the process.
We will impose the restrictions \eqref{eq:soma-lambda-gamma} to these parameters.

%The first one is in order to avoid a standard case, where the process to
%be defined is a Markovian pure jump process; see Remark~\ref{obs:jump} below. 
%The second one is a necessary condition for our construction to work.

\begin{observacao}
  \label{obs:restric-gamma-zero}
  Note that \eqref{eq:soma-lambda-gamma}
  implies that $\inf_{x \in \Nz} \gamma_x = 0$.
\end{observacao}

The probability space $(\Omega, \FF, \Prob)$ in which we will
construct the K process needs to admit the following independent
random variables:
\begin{itemize}
\item $\{ N_x: x \in \Nz\}$: a family of independent Poisson processes
  on $\R^+$, where $N_x$ have rate $\lambda_x$. We will denote the
  marks of $N_x$ by $0 < \sigma_1^x < \sigma_2^x < \ldots$.
\item $\{T_0\} \cup \{T_n^x: n \in \N, x \in \Nz\}$: a family of
  \emph{i.i.d.} random variables, exponential of rate $1$.
\end{itemize}

\begin{definicao}
  \label{def:Gamma}
  Let the {\em Clock Process} be given by:
  \begin{gather}
    \label{eq:Gamma}
    \Gamma(t) := \Gamma^y (t) := 
    \gamma_y T_0 + \sum_{z \in \Nz} \sum_{i = 1}^{N_z(t)} \gamma_z T^z_i
    + c t,
  \end{gather}
  for $t \geq 0$ and $y \in \Nzb$. We adopt the convention that
  $\gamma_\infty = 0$ and $\sum_{i=1}^{0}  \gamma_z T^z_i = 0$.
\end{definicao}

\begin{observacao}
  \label{obs:Gamma-cadlag}
  $\{\Gamma(t): t \geq 0\}$ is \qc strictly increasing and càdlàg. It
  also has stationary and independent increments. For each $t > 0$,
  $\Gamma(t)$ has finite mean and its distribution is absolutely
  continuous with respect to the Lebesgue measure.
\end{observacao}

%We now define the process of our main concern in this paper.

\begin{definicao}
  \label{def:processo}
  The K process with initial state $y \in \Nzb$ is a stochastic
  process $\{X^y(t): t \geq 0\}$ defined as follows:
  \begin{align}
    \label{eq:def-processo}
    X(t) := X^y(t) := 
    \begin{cases}
      y, & \text{if } t < \gamma_y T_0;\\
      x, & \text{if } t \in \bigcup_{i=1}^{\infty} [
      \Gamma^y(\sigma_i^x-), \Gamma^y(\sigma_i^x)); \\
      \infty, & \text{otherwise.}
    \end{cases}
  \end{align}
\end{definicao}

\begin{observacao}
  \label{obs:jump}
  This definition also makes sense when the first sum in~(\ref{eq:soma-lambda-gamma}) above is finite, 
but in this case the resulting process is a Markovian pure jump process. %on $\Nz$. 
This follows from the fact that in this case the set of Poissonian marks 
$\{\sigma_i^x:\,i\geq1,x\in\Nz\}$ is discrete (i.e., has no limit points) almost surely.%, and we have
%$[0,\gamma_y T_0)\cup\bigcup_{i=1}^{\infty}[\Gamma^y(\sigma_i^x-), \Gamma^y(\sigma_i^x))=[0,\infty)$,
%and thus there is no time $t$ where $X(t)=\infty$, and the last condition on~(\ref{eq:def-processo})
%is vacuous.
\end{observacao}

\begin{observacao}
  \label{obs:reinicia-infinito}
  Note that, by construction, the process started at a
  site $y \in \Nz$ stays there till the time $\gamma_y T_0$ and
  afterwards behaves as a copy of the K process started at
  $\infty$. That is:
  \begin{displaymath}
      X^{y}(t+\gamma_y T_0) = X^{\infty}(t),
  \end{displaymath}
  almost surely for every $t \geq 0$.
\end{observacao}

\begin{observacao}
  \label{obs:desc}
  It is a straightforward exercise to check that $X$ satisfies the rough description given in the 
  introduction (see beginning of paragraph of \eqref{eq:soma-lambda-gamma}), and that $c$ can be obtained as 
  $\lambda_x$ times the expected length of the $\infty$-set of $X$ between two consecutive 
  visits to $x$, for any $x\in\Nz$.
\end{observacao}

We will equip $\Nzb$ with the following metric:
\begin{equation}
  \label{eq:metrica}
  d(x, y) = (\gamma_x + \gamma_y) \ind\{ x \neq y\}.
\end{equation}
%where we adopt $\gamma_\infty = 0$. 
Under this metric $\{x\}$ is a open set for each $x \in \Nz$, and, roughly speaking, 
$x$ is close to $\infty$ when $\gamma_x$ is small. In symbols:
$x\to\infty$ if and only if $\gamma_x\to0$.

\begin{observacao}
  \label{obs:metrica}
  One readily checks that $(\Nzb, d)$ is a Polish space, not necessarily compact.  
  The continuous functions in the topology generated by this
  metric are the set:
  \begin{equation}
    \label{eq:classe-C}
    \CC = \left\{
      f: \Nzb \to \R : 
      \forall \epsilon > 0 \exists \delta > 0 \textrm{ s.~t. }
      |f(x) - f(\infty)| < \epsilon \text{ whenever } \gamma_x < \delta
    \right\}.
  \end{equation}
  Continuous functions in this topology are uniformly continuous.
\end{observacao}

\section{Truncated processes}\label{truncation}

We will establish the Markov property for the K process in the next section.
Our strategy %to prove the Markov property for the K process 
is to approximate it by Markov pure jump processes. In this section, we 
will construct these approximating processes in the same probability space 
as the original K process, and then prove an almost sure convergence in 
Theorem~\ref{teo:convergencia-truncados} below.

\begin{definicao}
  \label{def:conj-truncados}
  For each $\delta > 0$, let %us denote by:
  \begin{align*}
    \Nz_\delta := \{x \in \Nz: \gamma_x \geq \delta\},\quad
    \notag
    \Nzb_\delta := \Nz_\delta \cup \{\infty\}.
  \end{align*}
\end{definicao}

\begin{observacao}
  \label{obs:trunc-lambda}
  Note that the second condition in~\eqref{eq:soma-lambda-gamma}
  implies that $\sum_{x \in \Nz_\delta} \lambda_x < \infty$ for every
  $\delta > 0$.
\end{observacao}

\begin{proposicao}
    \label{prop:processo-cadlag}
    The K process is almost surely càdlàg.
\end{proposicao}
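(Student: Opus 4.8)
The plan is to show that the sample path $t \mapsto X^y(t)$ is right-continuous with left limits directly from the defining formula \eqref{eq:def-processo}, exploiting the structure of the Clock Process $\Gamma = \Gamma^y$ and the almost sure properties recorded in Remark~\ref{obs:Gamma-cadlag}. The key observation is that the random time set on which $X$ differs from $\infty$ is
\[
  J := [0,\gamma_y T_0) \cup \bigcup_{x\in\Nz}\bigcup_{i\ge1}\bigl[\Gamma(\sigma_i^x-),\Gamma(\sigma_i^x)\bigr),
\]
a countable union of half-open intervals, on each of which $X$ is constant; off $J$ the process equals $\infty$. So the only thing to check is the behaviour of the path near points of $J$ and near accumulation points of $J$.

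First I would fix a realization in the almost sure event on which $\Gamma$ is strictly increasing and càdlàg, its jumps occur exactly at the Poissonian marks $\{\sigma_i^x\}$ (each with jump size $\gamma_x T_i^x > 0$), and distinct marks produce distinct, non-overlapping image intervals $[\Gamma(\sigma_i^x-),\Gamma(\sigma_i^x))$; also fix the event of Remark~\ref{obs:jump}'s complement is not needed here, but I will use that the total $\infty$-set has positive Lebesgue measure at every scale, equivalently that between any two of the image intervals there is a genuine gap (this follows from $\Gamma$ being continuous off the discrete-in-$\R^+$ but dense-image set of marks, together with $c\ge0$ and the density of $\Gamma(t)$). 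Right-continuity at a time $t$: if $X(t)=x\in\Nz$, then $t$ lies in some half-open interval $[\Gamma(\sigma_i^x-),\Gamma(\sigma_i^x))$ which is a right-neighbourhood of $t$ in the relative sense, so $X(t+s)=x$ for small $s\ge0$ — here I use that the interval is relatively open on the right and that the next image interval (or the return to $\infty$) starts strictly later. If $X(t)=\infty$, then $t$ is not in any image interval; I must show $t$ is not a left-accumulation-from-the-right of image intervals, i.e. there is $\varepsilon>0$ with $(t,t+\varepsilon)$ disjoint from all of them — this is where I will need that for any fixed real time $t$, only finitely many image intervals can start within distance $\varepsilon$ of $t$, which follows from the fact that $\Gamma^{-1}$ maps a bounded time-interval to a bounded "clock-time" interval $[0,S]$, and $[0,S]$ contains only finitely many marks $\sigma_i^x$ with $\gamma_x \ge \delta$ for each $\delta$, while the marks with small $\gamma_x$ produce image intervals of total length $\le \sum_{\gamma_x<\delta}\lambda_x\gamma_x \cdot(\text{something})$, which is small by \eqref{eq:soma-lambda-gamma} — I will need to make this "escape to $\infty$" argument precise. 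Existence of left limits is symmetric: either $t$ is the right endpoint $\Gamma(\sigma_i^x)$ of an image interval, in which case $X(t-)=x$, or a left-neighbourhood of $t$ meets infinitely many ever-shorter image intervals, which by the same length estimate forces those sites to have $\gamma_x\to0$, i.e. to converge to $\infty$ in the metric \eqref{eq:metrica}, so $X(t-)=\infty$.

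The cleanest way to package the accumulation estimate is via the truncated sets $\Nz_\delta$ of Definition~\ref{def:conj-truncados}: for fixed $\delta$, the marks belonging to processes $N_x$ with $x\in\Nz_\delta$ form a Poisson process of finite total rate $\sum_{x\in\Nz_\delta}\lambda_x<\infty$ (Remark~\ref{obs:trunc-lambda}), hence are locally finite in $\R^+$, hence their (finitely many, in any bounded clock-window) image intervals are locally finite in $\R^+$ too. Meanwhile the contribution of $\Nz\setminus\Nz_\delta$ to any bounded clock-window $[0,S]$ has expected image-length $\sum_{x\notin\Nz_\delta}\lambda_x\gamma_x S$, which $\to0$ as $\delta\to0$; combined with the fact that all sites outside $\Nz_\delta$ are within metric-distance $2\delta$ of $\infty$, this shows that any sequence $t_n\to t$ with $X(t_n)\in\Nz$ but $t_n$ eventually leaving every fixed $\Nz_\delta$-image-interval must have $X(t_n)\to\infty$. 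I expect the main obstacle to be precisely this uniform control of the "short" intervals — making rigorous that near any deterministic or path-selected time $t$, the image intervals coming from low-$\gamma$ sites cannot conspire to prevent a one-sided limit, and handling the (null, by Remark~\ref{obs:Gamma-cadlag} and absolute continuity) exceptional times where $t$ could coincide with a jump of $\Gamma$. Everything else is bookkeeping on a countable union of intervals.
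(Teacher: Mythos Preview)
Your core strategy---truncate at level $\delta$, use that marks from $\Nz_\delta$ are locally finite in clock time, and that sites outside $\Nz_\delta$ are within metric-distance $2\delta$ of $\infty$---is exactly what the paper uses. The paper, however, packages it via Billingsley's oscillation criterion: for given $\epsilon>0$ it picks $\delta<\epsilon/2$, partitions $[0,T]$ at the finitely many points $\Gamma(S_i-)$, $\Gamma(S_i)$ coming from $\Nz_\delta$-marks, and observes that on each subinterval $X$ is either constant or takes values in $\Nzb\setminus\Nz_\delta$, a set of diameter at most $\epsilon$. This sidesteps pointwise right/left-limit analysis entirely.

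There is one genuine misstep in your direct approach. For $X(t)=\infty$ you write that you must find $\varepsilon>0$ with $(t,t+\varepsilon)$ disjoint from \emph{all} image intervals. That claim is false: when $c=0$ the $\infty$-set has Lebesgue measure zero (Proposition~\ref{prop:trans-c0}), so no such gap can exist. Right-continuity at an $\infty$-point means $d(X(t+s),\infty)\to0$, not that $X$ equals $\infty$ on a right-neighbourhood. The correct statement---which your final paragraph essentially contains---is that for each fixed $\delta$ there is $\varepsilon>0$ with $(t,t+\varepsilon)$ disjoint from all $\Nz_\delta$-image intervals, so $X(t+s)\in\Nzb\setminus\Nz_\delta$ there, hence $d(X(t+s),\infty)\le 2\delta$. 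The ``small total length'' estimate you sketch is irrelevant for this step; only the local finiteness of $\Nz_\delta$-marks matters. Once you fix this, your argument goes through and is equivalent to the paper's, just organized around sequential limits rather than oscillation.
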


\begin{proof}
  Using Remark \ref{obs:reinicia-infinito}, we need only show that
  the K process started at $\infty$ is càdlàg.

  Following \cite{billingsley:99}, %{\tt <more ref>}, 
  fixing a realization of the process, $T > 0$ and
  $\epsilon > 0$, let us show that there exists $0 = t_0 < t_1 < \ldots <
  t_N = T$ such that $w[t_{i-1}, t_i) < \epsilon$ for every $i = 1,
  \ldots, N$, where $w(A) = \sup_{t, s \in A} d(X^\infty(t),
  X^\infty(s))$.

  Let us fix $\delta < \epsilon / 2$, so the diameter of $\Nzb
  \setminus \Nz_\delta$ is at most $\epsilon$. And take $S_1 < \ldots
  < S_M$ an ordering of the set $\{ \sigma^x_i: x \in \Nz_\delta , \:
  i \geq 1, \: \Gamma(\sigma^x_i) \leq T\}$. This ordering is possible
  by Remark \ref{obs:trunc-lambda}, which implies that the latter set is almost surely finite
  for each finite $T$.

  Finally let us take $N = 2M+1$, $t_0 = 0$, $t_N = T$ and for
  $i=1,\ldots, M$:
  %\begin{align*}
    $t_{2i-1} = \Gamma(S_i-)$, %\\
    $t_{2i} = \Gamma(S_i)$. %\\
  %\end{align*}
  %
  If $t \in [t_{2i-1},t_{2i})$, then $X(t)$ is constant in
  this interval, while if $t \in [t_{2i-2}, t_{2i-1})$,
  then $X(t) \in \Nzb \setminus \Nz_\delta$, so the variation in this
  interval is at most $\epsilon$.  The same occurs in the intervals
  $[t_0, t_1)$ and $[t_{N-1}, t_N)$. The argument is complete.
\end{proof}

\begin{observacao}
  \label{obs:ponto-fixo-continuo}
  By the argument in the proof of Proposition \ref{prop:processo-cadlag},
  the only candidates for discontinuity points of the K process are
  $\{\Gamma(\sigma_i^x), \Gamma(\sigma_i^x-): i \in \N \, , x \in \Nz
  \}$. Since this a countable set and each such point is an absolutely continuous 
  random variable, then every fixed deterministic $t > 0$ is almost surely a 
  continuity point of the K process.
\end{observacao}

\begin{definicao}
  \label{def:convergencia-truncado}
  The truncated process at level $\delta > 0$ with initial state $y
  \in \Nzb_\delta$ is defined as follows:
  \begin{align}
    \label{eq:processo-truncado}
    X_\delta(t) := X_\delta^{y}(t) := 
    \begin{cases}
      y & \text{if } t < \gamma_y T_0\\
      x & \text{if } t \in \bigcup_{i=1}^{\infty} [
      \Gamma^y_\delta(\sigma_i^x-), \Gamma_\delta^{y}(\sigma_i^x)) \\
      \infty & \text{otherwise;}
    \end{cases}\\
    \Gamma_\delta(t) := \Gamma_\delta^{y} (t) := 
    \gamma_y T_0 + \sum_{z \in \Nz_\delta} \sum_{i = 1}^{N_z(t)} \gamma_z T^z_i
    + c t.
  \end{align}
\end{definicao}

\begin{observacao}
  \label{obs:restric-truncados}
  Since $\sum_{x \in \Nz_\delta} \lambda_x < \infty$ (Remark
  \ref{obs:trunc-lambda}) then the truncated process is a Markov pure
  jump process described as follows:
  \begin{itemize}
  \item In the case $c > 0$, from a state $x \neq \infty$, the process
    stays an exponential time of mean $\gamma_x$, after which it jumps
    to $\infty$. There it stays a exponential time of rate
    $\frac{1}{c} \sum_{x \in \Nz_\delta} \lambda_x$ and chooses a new
    state $y\in \Nz_\delta$ to jump to with probability proportional
    to $\lambda_y$.
  \item The case $c = 0$ is similar, except that in this case the
    process never visits $\infty$. After exiting a state $x \in
    \Nz_\delta$ it chooses a new state $y \in \Nz_\delta$ with
    probability proportional to $\lambda_y$. Note that in this case
    the truncated process never visits $\infty$; it is even true that
    $X^\infty(0) \neq \infty$ \qc.
  \end{itemize}
  So we can conclude that $X_\delta$ is Markovian and càdlàg.  
\end{observacao}

\begin{teorema}
  \label{teo:convergencia-truncados}
  $X_\delta^{\infty} (\bullet)$ converges to $X^\infty(\bullet)$
  \qc in the Skorohod $J_1$ topology as $\delta \to 0$.
\end{teorema}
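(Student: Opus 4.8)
The plan is to build, on the full‑measure event carrying the a.s.\ conclusions of Remarks~\ref{obs:Gamma-cadlag}, \ref{obs:trunc-lambda}, \ref{obs:restric-truncados} and~\ref{obs:ponto-fixo-continuo} and Proposition~\ref{prop:processo-cadlag}, an explicit family of time changes of $[0,\infty)$ realizing the convergence. Set $\Delta_\delta(s):=\Gamma^{\infty}(s)-\Gamma^{\infty}_\delta(s)=\sum_{z\notin\Nz_\delta}\sum_{i\le N_z(s)}\gamma_zT^z_i\ge0$; this is nondecreasing in $s$, nonincreasing as $\delta\downarrow0$, and $\E[\Delta_\delta(s)]=s\sum_{\gamma_z<\delta}\lambda_z\gamma_z\to0$ by~\eqref{eq:soma-lambda-gamma} and dominated convergence, so $\sup_{s\le S}\Delta_\delta(s)=\Delta_\delta(S)\to0$ \qc for every $S$ --- the clocks converge uniformly on compacts. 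By the standard characterization of $J_1$‑convergence on $D([0,\infty))$ it then suffices to treat each $D([0,T])$ for deterministic $T$, which by Remark~\ref{obs:ponto-fixo-continuo} is \qc a continuity point of $X^\infty$ lying in the interior of a single excursion interval. Fix such a $T$, put $S:=\inf\{s:\Gamma^\infty(s)\ge T\}<\infty$, and enumerate the marks $\{\sigma_i^x:x\in\Nz_\delta\}$ as $s_1<s_2<\cdots$ (\qc locally finite, Remark~\ref{obs:trunc-lambda}), with $x_k$ the site and $\ell_k$ the common jump of $\Gamma^\infty,\Gamma^\infty_\delta$ at $s_k$. On each matched block $[\Gamma^\infty(s_k-),\Gamma^\infty(s_k))$ both $X^\infty$ and $X^\infty_\delta$ are constant equal to $x_k$ and the clocks differ only by the shift $\Delta_\delta(s_k-)$, whereas on the complementary gaps $X^\infty$ lies in $\Nzb\setminus\Nz_\delta$, hence within $d$‑distance $\delta$ of $\infty$.

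For $c>0$ the construction is routine: let $\lambda_\delta$ be the translation $t\mapsto t-\Delta_\delta(s_k-)$ on the $k$‑th matched block (carrying it onto $[\Gamma^\infty_\delta(s_k-),\Gamma^\infty_\delta(s_k))$) and affine on each gap, carrying $[\Gamma^\infty(s_k),\Gamma^\infty(s_{k+1}-))$ onto the $\infty$‑plateau $[\Gamma^\infty_\delta(s_k),\Gamma^\infty_\delta(s_{k+1}-))$, which is nondegenerate thanks to the $ct$‑term. One checks $\lambda_\delta$ is a homeomorphism of $[0,\infty)$; then $X^\infty_\delta(\lambda_\delta(t))$ equals $x_k=X^\infty(t)$ on matched blocks and equals $\infty$ on gaps, so $\sup_{t\le T}d(X^\infty_\delta(\lambda_\delta(t)),X^\infty(t))\le\delta$, while $|\lambda_\delta(t)-t|\le\Delta_\delta(S)$ on $[0,T]$; the usual adjustment forcing $\lambda_\delta(T)=T$ costs only $o(1)$ because $X^\infty$ is constant near $T$. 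This settles the case $c>0$.

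The case $c=0$ is where the real work lies, and is the obstacle I expect. Now $X^\infty_\delta$ never visits $\infty$ and the $X^\infty_\delta$‑gaps are degenerate, so an $X^\infty$‑gap cannot be hidden in an $\infty$‑plateau. The remedy is to keep the translation on matched blocks, but for each gap $[\Gamma^\infty(s_k),\Gamma^\infty(s_{k+1}-))$ to \emph{prolong} the adjacent matched block on whichever of the two sides $k$, $k+1$ carries the smaller parameter $\gamma$, compressing the gap affinely into the start (or end) of that neighbouring $X^\infty_\delta$‑excursion. Again this yields a homeomorphism with $\sup_{t\le T}|\lambda_\delta(t)-t|\le\Delta_\delta(S)\to0$; on matched parts the path discrepancy is $0$, and on a gap $X^\infty_\delta(\lambda_\delta(t))\in\{x_k,x_{k+1}\}$ while $X^\infty(t)\in\Nzb\setminus\Nz_\delta$, so the discrepancy is at most $(\gamma_{x_k}\wedge\gamma_{x_{k+1}})+\delta$ (and similarly the discrepancy at $t=0$, where $X^\infty(0)=\infty$ and $X^\infty_\delta(0)=x_1$, is at most $\gamma_{x_1}+\delta$). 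Thus for $c=0$ the theorem reduces to showing $M_\delta:=\sup\{\gamma_{x_k}\wedge\gamma_{x_{k+1}}:s_{k+1}\le S\}\to0$ \qc --- that is, that asymptotically in $\delta$ no two consecutive slow‑site excursions are both far from $\infty$.

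To prove this, fix $\epsilon>0$ and set $W_\delta:=\sum_{x\in\Nz_\delta}\lambda_x$ and $V_\epsilon:=\sum_{\gamma_x\ge\epsilon}\lambda_x$; by~\eqref{eq:soma-lambda-gamma}, $V_\epsilon<\infty$ whereas $W_\delta\uparrow\infty$ as $\delta\downarrow0$. The superposed process $\sum_{x\in\Nz_\delta}N_x$ has intensity $W_\delta$, and conditionally on it the labels $x_1,x_2,\dots$ are i.i.d.\ with $\Prob(\gamma_{x_k}\ge\epsilon)=V_\epsilon/W_\delta$ (thinning of Poisson processes); hence, for deterministic $S_0$, the expected number of indices $k$ with $s_{k+1}\le S_0$ and both $\gamma_{x_k},\gamma_{x_{k+1}}\ge\epsilon$ is $O\!\bigl(S_0W_\delta(V_\epsilon/W_\delta)^2\bigr)=O\!\bigl(S_0V_\epsilon^2/W_\delta\bigr)\to0$, and combined with $\Prob(S>S_0)\to0$ (uniformly in $\delta$) this gives $\Prob(M_\delta\ge\epsilon)\to0$. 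Picking $\epsilon_n\downarrow0$ and then $\delta_n\downarrow0$ with $\delta_n<\epsilon_n$ and $\sum_n\Prob(M_{\delta_n}\ge\epsilon_n)<\infty$, Borel--Cantelli gives $M_{\delta_n}\to0$ \qc; since the event ``no consecutive $\Nz_\delta$‑excursion pair has both parameters $\ge\epsilon$'' is monotone under $\delta\downarrow0$ (an inserted mark has parameter $<\delta<\epsilon$), this upgrades to $M_\delta\to0$ \qc along all $\delta$, finishing the argument. In short, everything reduces to a time change driven by the elementary clock comparison $\Gamma^\infty_\delta\to\Gamma^\infty$; the single genuine difficulty is that for $c=0$ the truncated chain has no $\infty$‑plateaus, so one must show separately --- via the moment estimate above, which works precisely because~\eqref{eq:soma-lambda-gamma} concentrates the weight $W_\delta$ on sites near $\infty$ --- that the excursions $X^\infty_\delta$ skips are \qc flanked by excursions arbitrarily close to $\infty$ once $\delta$ is small.
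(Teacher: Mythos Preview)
Your argument is essentially correct but takes a genuinely harder route than the paper when $c=0$. You correctly identify the obstruction --- the truncated process has no $\infty$-plateaus to absorb the skipped excursions --- and you overcome it with a separate probabilistic argument (Poisson thinning, Borel--Cantelli, monotonicity) showing that consecutive $\Nz_\delta$-excursions are asymptotically flanked by sites close to $\infty$. This works, modulo two small repairs: (i) the phrase ``keep the translation on matched blocks'' is incompatible with ``compress the gap into the start of that neighbouring $X^\infty_\delta$-excursion'': if the $k$-th $X^\infty_\delta$-block is the translate of its own $X^\infty$-block \emph{and} also receives an adjacent gap, the map is two-to-one; what you actually want is a single piecewise-affine map sending (block $+$ its assigned gaps) onto the block; (ii) the initial gap $[0,\Gamma^\infty(s_1-))$ has only one neighbour, so you also need $\gamma_{x_1(\delta)}\to0$ \qc, which follows by the same thinning-plus-monotonicity scheme but is not covered by your $M_\delta$.

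The paper sidesteps all of this with a two-level trick. Rather than matching $X^\infty_\delta$ to $X^\infty$ at the $\Nz_\delta$-marks, it matches $X^\infty_\epsilon$ to $X^\infty$ at the \emph{coarser} $\Nz_\delta$-marks, for $\delta>\epsilon$. Because the full mark set is dense, between any two consecutive $\Nz_\delta$-marks the process $X^\infty_\epsilon$ (for $\epsilon$ small enough) already visits some site of $\Nz_\epsilon\setminus\Nz_\delta$; hence the $X^\infty_\epsilon$-gaps between $\Nz_\delta$-blocks are nondegenerate even when $c=0$, and the time change is defined by affine interpolation between matched blocks and matched gaps exactly as in your $c>0$ construction. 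The path discrepancy is then bounded uniformly by $\diam(\Nzb\setminus\Nz_\delta)\le2\delta$ --- no case split, no moment estimate, no Borel--Cantelli --- and one finishes by picking $\delta=\delta(\epsilon)\to0$ along a diagonal. What the paper's approach buys is simplicity and a unified treatment of all $c\ge0$; what yours buys is an explicit single-level time change and, incidentally, the structural fact that slow sites do not occur consecutively in the limit.
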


\begin{proof} 
  Following Proposition 5.3 from Chapter 3 of \cite{ethier:86}, we
  will prove that, for each $T > 0$, exists increasing bijective and
  Lipschitz continuous functions $\lambda_\delta: [0,\infty) \to [0,
  \infty)$, $\delta>0$,  such that almost surely:
  \begin{gather*}
    \lim_{\delta \to 0} \sup_{0 \leq t \leq T} d\left( X_\delta^{\infty}(t),
      X^\infty(\lambda_\delta(t)) \right) = 
    \lim_{\delta \to 0} \sup_{0 \leq t \leq T} |\lambda_\delta(t) - t| = 0.
  \end{gather*}
  
  These functions may depend on the realization of the process.  As the
  process always starts at $\infty$, we will stop carrying this index
  in our notation.
  
  Fix a realization of the process and a $T > 0$ and take $0 =
  S_\delta^0 < S_\delta^1 < \ldots $ an ordering of $\{0\} \cup \{
  \sigma^x_i : x \in \Nz_\delta, \, i \geq 1\}$.  This is a.s.~possible 
  since $\sum_{x\in \Nz_\delta} \lambda_x < \infty$.
  
  For $0<\epsilon < \delta$ let us define:
    \begin{displaymath}
    L_\delta^\epsilon := \min \left\{ i \geq 1: \Gamma_\epsilon(S_\delta^i) \geq T \right\}.
  \end{displaymath}
  
  The set $\{\sigma_i^x: x \in \Nz\setminus\Nz_\delta, i\geq 1\}$ is
  dense, so for $\epsilon$ sufficiently small we have that
  $\Gamma_\epsilon(S_\delta^{i+1}-) >
  \Gamma_\epsilon(S_\delta^i)$ for every $i < L^\epsilon_\delta$.
  
  For such $\epsilon$, we define $\lambda^\epsilon_\delta: [0, \infty)
  \to [0, \infty)$ the following way:
  \begin{displaymath}
    \lambda^\epsilon_\delta(t) = \begin{cases}
      \Gamma(S^i_\delta) + \frac{\Gamma(S^{i+1}_\delta-) - \Gamma(S^i_\delta)}
      {\Gamma_\epsilon(S^{i+1}_{\delta} -) - \Gamma_\epsilon(S^i_\delta)}
      \left[t - \Gamma_\epsilon(S^i_\delta)\right]
      & \textrm{ if }
      \Gamma_\epsilon(S^i_\delta) \leq t \leq \Gamma_\epsilon(S^{i+1}_\delta-) \\
      \Gamma(S^{i+1}_\delta-) - \Gamma_\epsilon(S^{i+1}_\delta-) + t
      & \textrm{ if }
      \Gamma_\epsilon(S^{i+1}_\delta-) \leq t \leq \Gamma_\epsilon(S^{i+1}_\delta)\\
      \Gamma(S_\delta^{L^\epsilon_\delta}) - \Gamma_\epsilon(S_\delta^{L^\epsilon_\delta}) + t
      & \textrm{ if } t > \Gamma_\epsilon(S^\delta_{L^\epsilon_\delta})
    \end{cases}
  \end{displaymath}

  Note that for $i =  0, \ldots L_\delta^\epsilon$:
  \begin{align*}
     \lambda^\epsilon_\delta(\Gamma_\epsilon(S^i_\delta-)) = \Gamma(S^i_\delta-),\quad
     \lambda^\epsilon_\delta(\Gamma_\epsilon(S^i_\delta)) = \Gamma(S^i_\delta),
  \end{align*}
  while $ \lambda^\epsilon_\delta$ is a linear interpolation between
  these points.
  
  Since $\Gamma(t) \geq \Gamma_\epsilon(t)$ for every $\epsilon > 0$,
  we have that $ \lambda^\epsilon_\delta(t) \geq t$. Using the
  linearity by parts and the fact that $\Gamma(\sigma^x_i) =
  \Gamma(\sigma_i^x-) + \gamma_x T^x_i$ and
  $\Gamma_\epsilon(S_\delta^{L_\delta^\epsilon}) \geq T$, we find
  that:
  \begin{displaymath}
    \sup_{0 \leq t \leq T} | \lambda^\epsilon_\delta(t) - t| \leq
    \max_{0 \leq i \leq L^\epsilon_\delta} \{ \Gamma(S^i_\delta) -
    \Gamma_\epsilon(S^i_\delta) \}.
  \end{displaymath}

  This quantity vanishes almost surely if $\delta$ is fixed and
  $\epsilon \to 0$.  So, for every $\delta > 0$ there exists 
  $\epsilon(\delta) > 0$ such that:
  \begin{displaymath}
    \sup_{0 \leq t \leq T} | \lambda^\epsilon_\delta(t) - t| < \delta
  \end{displaymath}
  for every $\epsilon \leq \epsilon(\delta)$.

  We can take $\epsilon(\delta)$ so that it is decreasing in
  $\delta$. So we can take $\delta(\epsilon)$ as its inverse. Since
  $\epsilon(\delta) > 0 \forall \delta$, then $\lim_{\epsilon \to 0}
  \delta(\epsilon) = 0$.  Thus:
  \begin{displaymath}
    \sup_{0 \leq t \leq T} |\lambda^\epsilon_{\delta(\epsilon)}(t)-t| <
    \delta(\epsilon) \xrightarrow{\epsilon \to 0} 0.
  \end{displaymath}
  
  By construction we have that $t \in
  [\Gamma_\epsilon(S^{i}_\delta-),
  \Gamma_\epsilon(S^{i}_\delta))$ if and only if
  $ \lambda^\epsilon_\delta(t) \in [\Gamma(S^{i}_\delta-),
  \Gamma(S^{i}_\delta))$. So, if $x \in \Nz_\delta$, we have that
  $X( \lambda^\epsilon_\delta(t)) = x$ if and only if
  $X_\epsilon(t) = x$. Observing that the diameter of $\Nzb
  \setminus \Nz_\delta$ is smaller or equal than $2 \delta$, we
  conclude that:
  \begin{displaymath}
    \sup_{0 \leq t \leq T} d\left(X( \lambda^\epsilon_\delta(t)),
      X_\epsilon (t)\right) 
    \leq 2 \delta.
  \end{displaymath}

  Making ${\lambda}_\epsilon =
  \lambda^\epsilon_{\delta(\epsilon)}$, we conclude that:
  \begin{displaymath}
    \sup_{0 \leq t \leq T} |{\lambda}_\epsilon(t) - t|
    \xrightarrow{\epsilon \to 0} 0, \quad
    \sup_{0 \leq t \leq T} d(X({\lambda}_\epsilon(t)), X_\epsilon(t))
    \xrightarrow{\epsilon \to 0} 0
    \qedhere
  \end{displaymath}
\end{proof}

\begin{corolario}
  \label{cor:convergencia-truncados}
  For every $y \in \Nzb$, $T > 0$ and almost every realization of the
  K process, there are increasing bijective and Lipschitz continuous
  functions $\lambda^{(y)}_\delta: [0, \infty) \to [0, \infty)$ such that:
  \begin{align}
    \lambda^{(y)}_\delta(t) &\geq t, \notag\\
    \label{eq:convergencia-truncado-uniforme}
    \sup_{0 \leq t \leq T} |\lambda^{(y)}_\delta(t) - t| &\leq
    \sup_{0 \leq t \leq T} |\lambda^{(\infty)}_\delta(t) - t|
    \xrightarrow{\delta \to 0} 0,\\
    \sup_{0 \leq t \leq T} d(X^y(\lambda_\delta^{(y)}(t)), X_\delta^{y}(t)) &\leq
    \sup_{0 \leq t \leq T} d(X^\infty(\lambda_\delta^{(y)}(t)), X_\delta^{\infty}(t))
    \xrightarrow{\delta \to 0} 0. \notag
  \end{align}

  Thus $X_\delta^{y}$ converges \qc to $X^y$ in the Skorohod
  $J_1$ topology as $\delta \to 0$ uniformly in $y$.
\end{corolario}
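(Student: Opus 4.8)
The plan is to bootstrap from Theorem~\ref{teo:convergencia-truncados}, i.e.\ from the case $y=\infty$, by means of the restart property. For $y=\infty$ nothing is needed beyond the theorem: let $\lambda^{(\infty)}_\delta$ be the time-change functions built in its proof (up to the harmless relabeling of the truncation parameter), adjusted if necessary near the origin so that $\lambda^{(\infty)}_\delta(0)=0$ --- this costs at worst an extra error term tending to $0$, because almost surely the set of states visited by $X^\infty$ and by $X^\infty_\delta$ over a short initial window has diameter $\to0$ as the window shrinks (sites reachable early have large $\lambda$, hence small $\gamma$ by~\eqref{eq:soma-lambda-gamma}, hence lie near $\infty$ in the metric~\eqref{eq:metrica}). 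These $\lambda^{(\infty)}_\delta$ are increasing Lipschitz bijections of $[0,\infty)$ with $\lambda^{(\infty)}_\delta(t)\ge t$ realizing the two displayed convergences. It remains to treat $y\in\Nz$, and since $X^y_\delta$ is defined only for $\gamma_y\ge\delta$, only $\delta\le\gamma_y$ matters as $\delta\to0$.

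Fix $y\in\Nz$ and set $a:=\gamma_yT_0$. By Remark~\ref{obs:reinicia-infinito}, $X^y(t+a)=X^\infty(t)$ \qc for all $t\ge0$; and since $\Gamma^y_\delta(t)=a+\Gamma^\infty_\delta(t)$ by Definition~\ref{def:convergencia-truncado} (recall $\gamma_\infty=0$), the same computation gives $X^y_\delta(t+a)=X^\infty_\delta(t)$ \qc for all $t\ge0$ whenever $\delta\le\gamma_y$; also $X^y(s)=X^y_\delta(s)=y$ for $0\le s<a$. Intersecting these events over the countable set $\Nz$ and over a sequence $\delta\downarrow0$ still leaves a full-measure set, on which I define
\begin{displaymath}
  \lambda^{(y)}_\delta(t):=t\ \ (0\le t\le a),\qquad \lambda^{(y)}_\delta(t):=a+\lambda^{(\infty)}_\delta(t-a)\ \ (t\ge a).
\end{displaymath}
Since $\lambda^{(\infty)}_\delta(0)=0$ the two branches agree at $a$, so $\lambda^{(y)}_\delta$ is an increasing Lipschitz bijection of $[0,\infty)$ with $\lambda^{(y)}_\delta(t)\ge t$; being the identity glued to a time shift of $\lambda^{(\infty)}_\delta$, it still meets the hypotheses of the criterion (Proposition~5.3, Chapter~3 of~\cite{ethier:86}) used in the theorem.

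To read off the estimates, assume $a<T$ (otherwise the suprema on $[0,T]$ are trivial). For $0\le t\le a$ we have $\lambda^{(y)}_\delta(t)=t<a$, so $X^y(\lambda^{(y)}_\delta(t))=y=X^y_\delta(t)$ and $|\lambda^{(y)}_\delta(t)-t|=0$. For $a\le t\le T$, put $s:=t-a\in[0,T-a]$; the restart identities give $X^y(\lambda^{(y)}_\delta(t))=X^\infty(\lambda^{(\infty)}_\delta(s))$, $X^y_\delta(t)=X^\infty_\delta(s)$, and $\lambda^{(y)}_\delta(t)-t=\lambda^{(\infty)}_\delta(s)-s$. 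Hence
\begin{displaymath}
  \sup_{0\le t\le T}\bigl|\lambda^{(y)}_\delta(t)-t\bigr|\le\sup_{0\le s\le T}\bigl|\lambda^{(\infty)}_\delta(s)-s\bigr|
\end{displaymath}
and
\begin{displaymath}
  \sup_{0\le t\le T}d\bigl(X^y(\lambda^{(y)}_\delta(t)),X^y_\delta(t)\bigr)\le\sup_{0\le s\le T}d\bigl(X^\infty(\lambda^{(\infty)}_\delta(s)),X^\infty_\delta(s)\bigr),
\end{displaymath}
and both right-hand sides vanish as $\delta\to0$ by Theorem~\ref{teo:convergencia-truncados}. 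These bounds involve only the $y=\infty$ processes, so they hold with the same rate for every $y$: this is the uniformity claimed, and the $J_1$ convergence of $X^y_\delta$ to $X^y$ follows from the cited criterion.

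I do not expect a genuine obstacle here --- the argument is the restart property together with time-change bookkeeping. The only point demanding real care is the behavior at the junction $t=a$ (equivalently at the time origin for the $X^\infty$-versions) when $c=0$: there the truncated process already sits in a site of $\Nz_\delta$ while the untruncated one is momentarily at $\infty$, which is why one cannot simply glue the identity to an unmodified $\lambda^{(\infty)}_\delta$ and must instead use the normalization $\lambda^{(\infty)}_\delta(0)=0$ together with the smallness of the diameters of the early-time state sets --- devices already built into the proof of Theorem~\ref{teo:convergencia-truncados}. Everything else is routine.
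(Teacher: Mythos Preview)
Your proof is correct and follows essentially the same approach as the paper: define $\lambda^{(y)}_\delta$ as the identity on $[0,\gamma_yT_0]$ and as $\gamma_yT_0+\lambda^{(\infty)}_\delta(\,\cdot\,-\gamma_yT_0)$ thereafter, then invoke the restart property (Remark~\ref{obs:reinicia-infinito}) to reduce the estimates to those already established for $y=\infty$. Your discussion of the junction at $t=a$ and the normalization $\lambda^{(\infty)}_\delta(0)=0$ is more cautious than necessary---the piecewise-linear construction in the proof of Theorem~\ref{teo:convergencia-truncados} already satisfies $\lambda_\delta(0)=0$ (since $S^0_\delta=0$ and $\Gamma(0)=\Gamma_\epsilon(0)=0$), so no adjustment is needed---but this does not affect correctness.
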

\begin{proof}
  After fixing a realization and a $T > 0$, let us take
  $\lambda_\delta: [0, \infty) \to [0, \infty)$ as in the proof of
  Theorem \ref{teo:convergencia-truncados}.
  
  Take  $\lambda^{(\infty)}_\delta = \lambda_\delta$ and for $y \in \Nz$ define:
  \begin{displaymath}
    \lambda_\delta^{(y)}(t) = \begin{cases}
      t, & \textrm{ if } t < \gamma_y T_0\\
      \gamma_yT_0 + \lambda_\delta(t - \gamma_y T_0),
      & \textrm{ if } t \geq \gamma_y T_0
    \end{cases}
  \end{displaymath}

  Observing Remark \ref{obs:reinicia-infinito} we conclude that this
  family of functions satisfies
  \eqref{eq:convergencia-truncado-uniforme}.
\end{proof}

\section{Markov property}\label{markov}

\begin{definicao}
  \label{def:semigrupo}
  We will denote the semigroups of the K process and the truncated
  process by $(\Psi_t)_{t \geq 0}$ and $(\Psi^{\delta}_t)_{t \geq
    0}$. That is, for $f: \Nzb \to \R$ we define:
  \begin{align*}
    \Psi_t f (x) := \E \left[  f(X^x(t) \right],\quad
    \Psi_t^{\delta} f (x) := \E \left[  f(X_\delta^{x}(t) \right] \\
  \end{align*}
\end{definicao}

\begin{proposicao}
  \label{prop:quase-feller}
  Let $f: \Nzb \to \R$ be a bounded continuous function, then $\Psi_t
  f$ is also a bounded continuous function for every $t > 0$.
\end{proposicao}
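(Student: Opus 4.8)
The plan is to reduce the statement to continuity at $\infty$ and then use the regeneration identity of Remark~\ref{obs:reinicia-infinito} together with Remark~\ref{obs:ponto-fixo-continuo}. Boundedness is immediate, since $|\Psi_t f(x)|\le\|f\|_\infty:=\sup_{\Nzb}|f|<\infty$. For continuity, recall from~\eqref{eq:metrica} (cf.\ Remark~\ref{obs:metrica}) that $\{x\}$ is open for every $x\in\Nz$, so $\Psi_t f$ is automatically continuous at each point of $\Nz$; hence it suffices to prove that $\Psi_t f(x_n)\to\Psi_t f(\infty)$ for every sequence $(x_n)\subset\Nz$ with $\gamma_{x_n}\to0$ (equivalently, $x_n\to\infty$ in the metric $d$).

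Fix such a sequence. By Remark~\ref{obs:reinicia-infinito} we have, almost surely, $X^{x_n}(t)=X^\infty(t-\gamma_{x_n}T_0)$ whenever $t\ge\gamma_{x_n}T_0$. Since $T_0<\infty$ almost surely and $\gamma_{x_n}\to0$, we get $\gamma_{x_n}T_0\to0$ almost surely, so almost surely there is $n_0$ with $\gamma_{x_n}T_0<t$, and hence $X^{x_n}(t)=X^\infty(t-\gamma_{x_n}T_0)$, for all $n\ge n_0$, with $t-\gamma_{x_n}T_0\to t$. By Remark~\ref{obs:ponto-fixo-continuo} the fixed deterministic time $t>0$ is almost surely a continuity point of the K process started at $\infty$; on that full-probability event $X^\infty(t-\gamma_{x_n}T_0)\to X^\infty(t)$ in $d$, and therefore $f(X^{x_n}(t))\to f(X^\infty(t))$ almost surely, by continuity of $f$.

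Since the random variables $f(X^{x_n}(t))$ are uniformly bounded by $\|f\|_\infty$, the dominated convergence theorem yields $\Psi_t f(x_n)=\E[f(X^{x_n}(t))]\to\E[f(X^\infty(t))]=\Psi_t f(\infty)$. This is the required continuity at $\infty$, and the proposition follows.

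The step I expect to be the crux is the almost sure convergence $X^\infty(t-\gamma_{x_n}T_0)\to X^\infty(t)$: the approximating times $t-\gamma_{x_n}T_0$ sit strictly below $t$, and the K process is only right-continuous in general, so the limit would be $X^\infty(t-)\ne X^\infty(t)$ were $t$ a jump time. What rescues the argument is precisely Remark~\ref{obs:ponto-fixo-continuo}: the candidate discontinuity points form a countable family of absolutely continuous random variables, hence almost surely avoid any fixed $t>0$. One could instead run the argument through Corollary~\ref{cor:convergencia-truncados}, using the uniform $J_1$-convergence of the truncated processes and the continuity of the truncated semigroups, again invoking Remark~\ref{obs:ponto-fixo-continuo} at the fixed time $t$; but the regeneration-identity route above is more direct.
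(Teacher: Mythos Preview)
Your proof is correct and follows essentially the same route as the paper's: reduce to continuity at $\infty$, invoke the regeneration identity from Remark~\ref{obs:reinicia-infinito} to write $f(X^{x_n}(t))=f(X^\infty(t-\gamma_{x_n}T_0))$ for large $n$, use Remark~\ref{obs:ponto-fixo-continuo} to ensure $t$ is a.s.\ a continuity point of $X^\infty$, and conclude via dominated convergence. Your commentary on the left-limit issue and the alternative route through Corollary~\ref{cor:convergencia-truncados} is accurate but not part of the paper's argument.
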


\begin{observacao}
  This result establishes a property which is close to (but is not quite) 
  a Feller property of our semigroup. For a semigroup to be
  Feller, it would have to take continuous functions that vanish at
  infinity into continuous functions that vanish at infinity. This may
  not be the case for our process, depending on the choice of the
  parameters.
\end{observacao}

\begin{proof}[Proof of Proposition \ref{prop:quase-feller}]

  Fix a $t > 0$. If $f$ is bounded, then obviously $\Psi_t f$ is also
  a bounded function. We are left with showing that it is continuous.  For
  that take $(x_n)$ a sequence of elements of $\Nzb$ that converges to
  $x \in \Nzb$. We want to show that:
  \begin{equation}
    \label{eq:quase-feller-1}
    \lim_{n \to \infty} \Psi_t f (x_n) = \Psi_t f (x)
  \end{equation}

  If $x \neq \infty$, then observing the metric \eqref{eq:metrica} we
  notice that $d(x_n, x) \geq \gamma_x\ind\{ x_n = x \}$, so there
  must exist an $n_0$ such that $x_n = x$ for every $n > n_0$. 
  \eqref{eq:quase-feller-1} follows immediately in this case.

  If $x = \infty$, we can write:
  \begin{displaymath}
    \left| \Psi_t f (x_n) - \Psi_t f (x) \right| =
    \left| \E \left[ 
      f (X^{x_n}(t)) - f (X^{\infty}(t))
    \right] \right| \leq
    \E \left| 
      f (X^{x_n}(t)) - f (X^{\infty}(t))
    \right|.
  \end{displaymath}

  The quantity inside the expected value is bounded (since $f$ is
  bounded), so if we can show that this quantity converges almost
  surely to zero then we can conclude with the dominated convergence
  theorem.

  Note that $d(x_n, \infty) = \gamma_{x_n} \xrightarrow{n\to\infty}
  0$, so for a fixed realization of the process we can take $n$ large
  enough so $\gamma_{x_n} T_0 < t$. In this case, using Remark
  \ref{obs:reinicia-infinito}, we can write:
  \begin{displaymath}
    f (X^{x_n}(t)) - f (X^{\infty}(t)) = 
    f (X^\infty(t-\gamma_{x_n} T_0)) - f (X^\infty(t)).
  \end{displaymath}

  This quantity vanishes as $n \to \infty$ since $t$ is \qc a
  continuity point of the K process (by Remark
  \ref{obs:ponto-fixo-continuo}), $\gamma_{x_n} \to 0$ as $n \to
  \infty$ and $f$ is continuous.
\end{proof}

\begin{observacao}
  One way to turn the K process into a Feller process if it does not
  have that property in the above formulation, as pointed out to us by
  Milton Jara, is to add new sites to $\Nz$ other than only $\infty$
  as follows. For concreteness, let us suppose in this remark that
  $\Nz = \{1,2, \ldots \}$.  Let $\GG$ be the set of limit points of
  $\{\gamma_x: x \in \Nz\}$. Note that $0 \in \GG$ by Remark
  \ref{obs:restric-gamma-zero}.  The new state space would be:
  \begin{displaymath}
    \Nzb := \{ (x, \gamma_x): x \in \Nz\} \cup \{ (\infty, g): g \in
    \mathcal{G} \},
  \end{displaymath}
  equipped with the metric:
  \begin{equation}
    \label{eq:metrica-concertar-feller}
    d( (x, \gamma_x), (y, \gamma_y) ) = \left| \frac{1}{x} - \frac{1}{y}
    \right| + |\gamma_x - \gamma_y|,
  \end{equation}
  with the convention that $\frac{1}{\infty} = 0$. This can be seen as a 
  completion of the metric \eqref{eq:metrica-concertar-feller} applied 
  to the original space.

  For a point $(y, g) \in \Nzb$, we (re)define our process as follows:
  \begin{align*}
    \Gamma^{y, g} (t) &:= g T_0 + \sum_{z \in \Nz} \sum_{i =
      1}^{N_z(t)} \gamma_z T^z_i + c t;\\
    X^{y, g} &:= 
    \begin{cases}
      (y,g), & \text{if } t < g T_0;\\
      (x, \gamma_x), & \text{if } t \in \bigcup_{i=1}^{\infty} [
      \Gamma^{y,g}(\sigma_i^x-), \Gamma^{y,g}(\sigma_i^x)); \\
      (\infty, 0), & \text{otherwise.}
    \end{cases}
  \end{align*}

  The point $(\infty, 0)$ replaces $\infty$ in the original formulation, 
  and the new points $(\infty, g) \in \Nzb$, $g > 0$, are states that are 
  never visited by the K process, except when the process starts in one 
  of them; in this case, the process never visits any of them again after 
  leaving that initial one of them for the first time. 

  All results from this paper, suitably reformulated when necessary,
  hold also for this version of the K process. Additionally, under
  this reformulation, the process satisfies the Feller property.

  A mild complication of this approach is that $\Nzb$ could be
  uncountable, making the characterization of the K process via a $Q$
  matrix, as will be done in Section~ \ref{trans}, cumbersome. That
  and the heavier notation of the new formulation led us to stick to
  the original definition of our process.
\end{observacao}

\begin{lema}
  \label{lema:visitados-compacto}
  For $t > 0$, let $V_t := \cup_{s \in [0, t]} \{ X(s) \}$ be the set
  of the visited states up to time $t$. Then:
  \begin{equation}
    \label{eq:visitados-compacto}
    \#\{ x \in V_t: \gamma_x > \epsilon\} < \infty
  \end{equation}
  almost surely for every $\epsilon > 0$ and $t > 0$.
\end{lema}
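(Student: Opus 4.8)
The plan is to reduce the statement to a finiteness fact about Poissonian marks by relating the visited set $V_t$ of $X=X^\infty$ to the Poisson processes $\{N_x\}$. Fix $\epsilon>0$ and $t>0$. A site $x$ with $\gamma_x>\epsilon$ — equivalently $x\in\Nz_\epsilon$ — belongs to $V_t$ if and only if the process occupies $x$ at some $s\le t$, which by Definition~\ref{def:processo} happens exactly when $s\in[\Gamma(\sigma_i^x-),\Gamma(\sigma_i^x))$ for some $i$. Hence $x\in V_t$ with $\gamma_x>\epsilon$ forces $\Gamma(\sigma_i^x-)\le t$ for some $i\ge 1$, and since $\Gamma$ is \qc\ strictly increasing (Remark~\ref{obs:Gamma-cadlag}), this gives $\sigma_i^x\le \Gamma^{-1}(t)=:\tau$, a finite random time. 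So it suffices to show that
\begin{equation*}
  \#\{x\in\Nz_\epsilon: \sigma_1^x\le \tau\}<\infty \quad\text{\qc}
\end{equation*}

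First I would bound this using a deterministic cutoff and the Clock Process. For any $M>0$, on the event $\{\tau\le M\}$ the set above is contained in $\{x\in\Nz_\epsilon:\sigma_1^x\le M\}$. By Remark~\ref{obs:trunc-lambda}, $\sum_{x\in\Nz_\epsilon}\lambda_x<\infty$, so $\E\,\#\{x\in\Nz_\epsilon:\sigma_1^x\le M\}=\sum_{x\in\Nz_\epsilon}(1-e^{-\lambda_x M})\le M\sum_{x\in\Nz_\epsilon}\lambda_x<\infty$; in particular this set is \qc\ finite for each fixed $M$. Since $\tau<\infty$ \qc\ (because $\Gamma(s)\to\infty$ \qc\ as $s\to\infty$, as $\Gamma$ is strictly increasing with stationary independent increments and $\E\Gamma(t)<\infty$ for each $t$), letting $M\to\infty$ along integers gives the claim for $X^\infty$. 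The general initial state $y\in\Nz$ follows from Remark~\ref{obs:reinicia-infinito}: $X^y$ restricted to $[\gamma_y T_0,t]$ is a copy of $X^\infty$ run for time $t-\gamma_y T_0$, so $V_t(X^y)\subseteq\{y\}\cup V_t(X^\infty)$, which changes the count by at most one.

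The only mildly delicate point is justifying $\tau=\Gamma^{-1}(t)<\infty$ \qc, i.e.\ that the Clock Process is unbounded; this is where one uses that $\Gamma$ has stationary, independent, \qc-strictly-positive increments, so $\Gamma(n)$ is a random walk with positive i.i.d.\ increments and hence diverges. Everything else is a one-line first-moment estimate plus monotone passage to the limit, so I do not anticipate a real obstacle. One could alternatively phrase the whole argument directly in terms of the truncated processes: $V_t(X^\infty)\cap\Nz_\epsilon$ coincides, for $\delta<\epsilon$, with the visited states of $X_\delta^\infty$ up to the corresponding (finite) time that lie in $\Nz_\epsilon$, and $X_\delta$ is a pure jump process on a state space $\Nzb_\delta$ making only finitely many jumps in finite time (Remark~\ref{obs:restric-truncados}); but the direct Poissonian count above is cleaner and avoids fussing with the time change $\lambda_\delta$.
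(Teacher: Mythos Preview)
Your proof is correct, but it proceeds by a genuinely different mechanism than the paper's. The paper argues on the \emph{sojourn-time} side: if infinitely many sites with $\gamma_x>\epsilon$ were visited by time $t$, then the first $N$ of them (ordered by their first Poisson marks, hence independently of the $T$'s) contribute total holding time at least $\epsilon\sum_{i=1}^{N-1}T_1^{x_{(i)}}$ inside $[0,t]$; since this is $\epsilon$ times a sum of $N-1$ i.i.d.\ exponentials, its probability of being $\le t$ tends to $0$ as $N\to\infty$. No use is made of $\sum_{x\in\Nz_\epsilon}\lambda_x<\infty$, nor of inverting the clock.

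Your argument works on the \emph{Poisson-rate} side: you invert $\Gamma$ to get a finite random Poisson-time horizon $\tau$, and then use $\sum_{x\in\Nz_\epsilon}\lambda_x<\infty$ (Remark~\ref{obs:trunc-lambda}) and a first-moment bound to conclude that only finitely many $x\in\Nz_\epsilon$ have a mark before $\tau$. This is exactly the reasoning the paper itself invokes in the proof of Proposition~\ref{prop:processo-cadlag} (finiteness of $\{\sigma_i^x:x\in\Nz_\delta,\Gamma(\sigma_i^x)\le T\}$), so your route is arguably more in keeping with the tools already on the table. The small price is the extra step of justifying $\tau<\infty$, which you handle correctly via the i.i.d.\ increments of $\Gamma$. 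The paper's route, by contrast, needs no clock inversion and uses only that $\gamma_x>\epsilon$, but requires the (easy) observation that the order of first visits to $\Nz_\epsilon$ is measurable with respect to the Poisson marks and hence independent of the holding variables $T_1^x$.
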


\begin{proof}
  Since the events considered are monotonic in $\epsilon$ and $t$,
  then showing that \eqref{eq:visitados-compacto} occurs with
  probability $1$ for any fixed $\epsilon > 0$ and $t > 0$ is enough
  to prove the lemma.
  
  For fixed $\epsilon > 0$ and $t > 0$, consider the complementary
  event, that is:
  \begin{equation}
    \label{eq:visitados-compacto-1}
    \{\#\{ x \in V_t: \gamma_x > \epsilon\} = \infty\}
  \end{equation}
  
  In this event, for any $N \in \N$ fixed, there are states $x_1,
  \ldots x_N$ such that:
  \begin{align*}
    \epsilon \sum_{i=1}^N T^{x_i}_1 <
    \sum_{i=1}^N \gamma_{x_i} T^{x_i}_1 < t .
  \end{align*} 
  
  So we can conclude that, if $S_N$ is a sum of $N$ independent
  exponential random variables of rate $1$, then the probability of
  \eqref{eq:visitados-compacto-1} can be bounded by the probability of
  $\epsilon S_N$ being smaller than $t$.  The lemma follows from the
  observation that $S_N \to \infty$ as $N \to \infty$ with probability
  $1$.
\end{proof}

\begin{lema}
  \label{lema:markov1}
  Take $f: \Nzb \to \R$ a bounded continuous function, $t > 0$ and
  a set $A \subseteq \Nzb$ satisfying:
  \begin{equation}
    \label{eq:convergencia-semigrupo-hip}
    \# \{ x \in A: \gamma_x > \epsilon \} < \infty,
   \end{equation}
  for every $\epsilon > 0$. Under these conditions:
  \begin{equation}
    \label{eq:convergencia-semigrupo}
    \sup_{y \in A} | \Psi^\delta_t f (y) - \Psi_t f(y) |
    \xrightarrow{\delta \to 0} 0.
  \end{equation}
\end{lema}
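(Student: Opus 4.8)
The plan is to deduce the uniform (over $y\in A$) convergence \eqref{eq:convergencia-semigrupo} from the almost sure convergence of the truncated processes established in Corollary~\ref{cor:convergencia-truncados}, together with dominated convergence. First I would write
\[
  |\Psi^\delta_t f(y) - \Psi_t f(y)|
  = \left| \E\!\left[ f(X^y_\delta(t)) - f(X^y(t)) \right] \right|
  \le \E\!\left| f(X^y_\delta(t)) - f(X^y(t)) \right|,
\]
which is already uniformly bounded by $2\|f\|_\infty$. So it suffices to bound the right-hand side by a quantity that is uniform in $y\in A$ and tends to $0$ almost surely. The natural route is to use the time-change functions $\lambda^{(y)}_\delta$ from Corollary~\ref{cor:convergencia-truncados}: since $\sup_{0\le s\le t} d(X^y(\lambda^{(y)}_\delta(s)), X^y_\delta(s))$ is dominated by the corresponding quantity with $y=\infty$, and since $\lambda^{(y)}_\delta(t)\ge t$, the difference $f(X^y_\delta(t))-f(X^y(t))$ is controlled by (i) the ``distortion'' $d(X^y(\lambda^{(y)}_\delta(t)), X^y_\delta(t))$, which is small uniformly in $y$, plus (ii) the fluctuation of $f\circ X^y$ between the times $t$ and $\lambda^{(y)}_\delta(t)$.

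The subtle point, and what I expect to be the main obstacle, is term (ii): even though $\lambda^{(y)}_\delta(t)-t\to 0$, to conclude that $f(X^y(\lambda^{(y)}_\delta(t)))-f(X^y(t))\to 0$ one needs $t$ to be a continuity point of the path $X^y$, and one needs this in a way that is controlled uniformly over $y\in A$. Here I would exploit the hypothesis \eqref{eq:convergencia-semigrupo-hip} on $A$ together with Lemma~\ref{lema:visitados-compacto}: Remark~\ref{obs:reinicia-infinito} says that $X^y(\cdot+\gamma_y T_0)=X^\infty(\cdot)$, so up to the random shift $\gamma_y T_0$ all the paths $X^y$ are built from the single path $X^\infty$. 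For $y$ with $\gamma_y$ small the shift $\gamma_y T_0$ is small, and the only $y\in A$ with $\gamma_y$ not small are finitely many by \eqref{eq:convergencia-semigrupo-hip}; for those one argues pointwise. Thus I would split $A = A_\epsilon' \cup A_\epsilon''$ with $A_\epsilon'=\{y\in A:\gamma_y>\epsilon\}$ finite and $A_\epsilon''=\{y\in A:\gamma_y\le\epsilon\}$, handle $A_\epsilon'$ by finitely many applications of the one-point convergence, and handle $A_\epsilon''$ by reducing to fluctuations of $X^\infty$ near the deterministic time $t$, which are \qc small by Remark~\ref{obs:ponto-fixo-continuo} (together with càdlàg regularity of $X^\infty$ from Proposition~\ref{prop:processo-cadlag}, giving that on a fixed realization the oscillation of $X^\infty$ on a small interval around $t$ is small once $t$ is a continuity point).

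Concretely, the steps in order are: (1) reduce \eqref{eq:convergencia-semigrupo} to showing $\sup_{y\in A}\E|f(X^y_\delta(t))-f(X^y(t))|\to 0$ and invoke dominated convergence on the outer limit once the integrand is shown to go to $0$ \qc uniformly in $y$; (2) fix $\epsilon>0$, split $A$ as above, and for the finite part $A_\epsilon'$ use that each $\Psi^\delta_t f(y)\to\Psi_t f(y)$ (which follows from Corollary~\ref{cor:convergencia-truncados} and bounded continuity of $f$, since $J_1$ convergence plus continuity of the limit path at $t$ gives $X^y_\delta(t)\to X^y(t)$) — a finite maximum of null sequences is null; (3) for the part $A_\epsilon''$, use $|f(X^y_\delta(t))-f(X^y(t))| \le d\text{-oscillation bounds}$ together with Corollary~\ref{cor:convergencia-truncados}'s uniform-in-$y$ control and the fact that $\gamma_y\le\epsilon$ makes the time shift, and hence the induced path fluctuation, uniformly small (here Lemma~\ref{lema:visitados-compacto} guarantees that the $X^\infty$ path spends, near any fixed time, only near-$\infty$ states except for finitely many, so small $d$-diameter prevails); (4) let $\epsilon\to 0$. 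The delicate bookkeeping is making the "fluctuation of $f\circ X^\infty$ on $[t-\gamma_y T_0,\,t]$" estimate genuinely uniform over the infinitely many $y\in A_\epsilon''$, which is why Lemma~\ref{lema:visitados-compacto} and the $J_1$ time-change from Corollary~\ref{cor:convergencia-truncados} are the essential inputs.
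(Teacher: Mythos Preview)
Your approach is essentially the same as the paper's: the paper also inserts the time-changed path $X^y(\lambda^{(y)}_\delta(t))$ between $X^y_\delta(t)$ and $X^y(t)$, uses Corollary~\ref{cor:convergencia-truncados} and uniform continuity of $f$ for the ``distortion'' piece, and for the ``fluctuation'' piece splits $A$ into a finite large-$\gamma$ part (handled pointwise) and a small-$\gamma$ part reduced via Remark~\ref{obs:reinicia-infinito} to fluctuations of $X^\infty$ near the a.s.\ continuity point $t$. One small comment: your invocation of Lemma~\ref{lema:visitados-compacto} in step (3) is superfluous --- the paper does not use it here; what suffices for the small-$\gamma$ part is simply that $t$ is a.s.\ a continuity point of $X^\infty$ (Remark~\ref{obs:ponto-fixo-continuo}) together with the uniform-in-$y$ bound $|\lambda^{(y)}_\delta(t)-t|\le\sup_s|\lambda^{(\infty)}_\delta(s)-s|$ from Corollary~\ref{cor:convergencia-truncados}, so that both $t-\gamma_y T_0$ and $\lambda^{(y)}_\delta(t)-\gamma_y T_0$ lie in a shrinking neighborhood of $t$ on which the oscillation of $f\circ X^\infty$ is small.
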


\begin{proof}

  Let us first show the convergence without uniformity, that is, we
  will show that $\Psi^\delta_t f (y) \to \Psi_t f(y)$ as $\delta \to
  0$ for any $y \in A$. Let us write
  \begin{align}
    \label{eq:convergencia-semigrupo-1}
    \Psi^\delta_t f (y) -\Psi_t f(y) &=
    \E \left[
      f(X_\delta^{y}(t)) - f(X^y(t))
    \right].
  \end{align}
  Remark \ref{obs:ponto-fixo-continuo} guarantees that $t$ is
  almost surely a continuity point of $X^y$. Since convergence in the
  Skorohod topology implies convergence at the continuity points of the
  limit trajectory and $f$ is a continuous function, then 
  $f(X_\delta^{y}(t)) \to f(X^y(t))$ \qc as $\delta \to 0$.

  Since $f$ is bounded, the dominated convergence theorem implies that
  \eqref{eq:convergencia-semigrupo-1} vanishes as well as
  $\delta\to 0$.

  This is readily extended to the case where $A$ is finite. Let us
  suppose from now on that $A$ is infinite.

  Take $\lambda_\delta^{(y)}$ as in Corollary
  \ref{cor:convergencia-truncados} for some $T > t$. We can write:
  \begin{align}
    \notag
    \sup_{y \in A} | \Psi^\delta_t f (y) - \Psi_t f(y) |
    &= \sup_{y \in A} \left\lvert \E \left[f(X_\delta^{y}(t)) -
        f(X^y(t)) \right]\right\rvert \\
    \label{eq:convergencia-semigrupo-2}
    &\leq \sup_{y\in A}\left\lvert \E \left[f(X_\delta^{y}(t)) -
        f(X^y(\lambda_\delta^{(y)}(t))) \right]
    \right\rvert\\
    \label{eq:convergencia-semigrupo-3}
    &+ \sup_{y\in A}\left\lvert \E \left[f(X^y(\lambda_\delta^{(y)}(t))) -
        f(X^y(t)) \right] \right\rvert.
  \end{align}

  Let us now show that both terms of this sum vanish as $\delta
  \to 0$.

  Observing \eqref{eq:convergencia-truncado-uniforme}, then for every
  $y$ \qc:
  \begin{equation}
    \label{eq:convergencia-semigrupo-4}
    d(X_\delta^{y}(t), X^y(\lambda_\delta^{(y)}(t)) \leq
    \sup_{s \in [0, T]} d(X_\delta^{\infty}(s), X^\infty(\lambda_\delta^{(\infty)}(s))
    \xrightarrow{\delta \to 0} 0.
  \end{equation}

  Since $f$ is a continuous functions, it is also uniformly continuous
  (see Remark \ref{obs:metrica}). That is, for every $\epsilon > 0$,
  exists a $\eta_\epsilon > 0$ such that $|f(x)-f(y)| < \epsilon$
  whenever $d(x, y) < \eta_\epsilon$.

  If we fix an arbitrary $\epsilon > 0$, we can bound
  \eqref{eq:convergencia-semigrupo-2} by:
  \begin{align*}
    &\sup_{y\in A} \left\lvert \E \left[ f(X_\delta^{y}(t)) -
        f(X^{y}(\lambda_\delta^{(y)}(t))) \right]
    \right\rvert\\
    \leq&\, \sup_{y\in A} \left\lvert \E \left[ \left(f(X_\delta^{y}(t)) -
          f(X^{y}(\lambda_\delta^{(y)}(t))) \right) \ind\{ d(X_\delta^{y}(t),
        X^y(\lambda_\delta^{(y)}(t)) < \eta_\epsilon \} \right]
    \right\rvert \\
    +&\, 2\Vert f \Vert \sup_{y \in A} \Prob \left( d(X_\delta^{y}(t),
      X^y(\lambda_\delta^{(y)}(t)) \geq
      \eta_\epsilon \right) \\
    \leq&\, \epsilon + 2 \Vert f \Vert \,\Prob \left( \sup_{s \in [0,
        T]}d(X_\delta^{\infty}(s), X^\infty(\lambda_\delta^{(\infty)}(s)) \geq
      \eta_\epsilon \right).
  \end{align*}
  Observing \eqref{eq:convergencia-semigrupo-4}, we conclude that this
  last quantity converges to $\epsilon$ as $\delta \to 0$. Since
  $\epsilon$ is an arbitrary positive constant we conclude that
  \eqref{eq:convergencia-semigrupo-2} vanishes as $\delta \to 0$.
 
  To compute \eqref{eq:convergencia-semigrupo-3}, note that each
  term of that quantity vanishes as $\delta \to 0$ since $t$ is
  \qc a continuity point of $X^y$ and $\lambda_\delta^{(y)}(t) \to t$
  \qc. Using \eqref{eq:convergencia-semigrupo-hip} we can take
  $\eta(\delta)$ a monotonic function that vanishes as $\delta
  \to 0$ and if we define $A_\delta := \{x \in A: \gamma_x <
  \eta(\delta)\}$ then:
  \begin{displaymath}
      \max_{y  \in A \setminus A_\delta} \left\lvert
      \E \left[f(X^y(\lambda_\delta^{(y)}(t))) - f(X^y(t)) \right]
    \right\rvert \xrightarrow{\delta \to 0} 0.
  \end{displaymath}

  Since $\eta(\delta) \to 0$, we may choose $\delta$ so small that
  $\sqrt{\eta(\delta)} < t \leq \lambda_\delta^{(y)}(t)$. The latter
  inequality follows from \eqref{eq:convergencia-truncado-uniforme}.

  Using Remark \ref{obs:reinicia-infinito} and considering in the
  cases $\gamma_yT_0 < \sqrt{\eta(\delta)}$ and $\gamma_yT_0 \geq
  \sqrt{\eta(\delta)}$, we can write:
  \begin{align}
    \label{eq:convergencia-semigrupo-5}
    &\sup_{y \in A_\delta} \left\lvert \E \left[f(X^y(\lambda_\delta^{(y)}(t))) -
        f(X^y(t)) \right]
    \right\rvert \notag \\
    \leq& \sup_{y \in A_\delta} \E \left\lvert
      \left[f(X^\infty(\lambda_\delta^{(y)}(t)-\gamma_yT_0)) -
        f(X^\infty(t-\gamma_yT_0)) \right] \ind\{ \gamma_yT_0 <
      \sqrt{\eta(\delta)} \}
    \right\rvert\\
    +& 2 \Vert f \Vert \sup_{y\in A_\delta} P(\gamma_y T_0 > \sqrt{\eta(\delta)}).
    \notag
  \end{align}

  Note that $\sup_{y\in A_\delta} \gamma_y \leq \eta(\delta)$,
  so $\sup_{y\in A_\delta} \Prob(\gamma_y T_0 > \sqrt{\eta(\delta)})
  \leq e^{-\frac{1}{\sqrt{\eta(\delta)}}} \to 0$ as $\delta \to 0$.

  We can bound the first term in \eqref{eq:convergencia-semigrupo-5}
  by:
  \begin{displaymath}
     \E \left[ \sup_{y \in A_\delta}
      \sup_{0 \leq s \leq \sqrt{\eta(\delta)}} \left\lvert
        f(X^\infty(\lambda_\delta^{(y)}(t)-s)) -
        f(X^\infty(t-s))
    \right\rvert \right].
  \end{displaymath}

  The random variable inside the expected value is bounded by $2\Vert f
  \Vert$.  So if we show that it goes almost surely to zero, then the
  dominated convergence theorem will tell us that its expected value
  also vanishes.

  Fixing an $\epsilon > 0$, take $\epsilon^\prime > 0$ such that
  $|f(x) - f(y)| < \epsilon$ whenever $d(x, y) < \epsilon^\prime$.
  This is possible by the uniform continuity of $f$.
 
  As $t$ is \qc a continuity point of $X^\infty$, there exists an
  $\epsilon^{\prime\prime}$ such that $d(X^\infty(t), X^\infty(t+s))
  <\frac{\epsilon^{\prime}}{2}$ whenever $|s| <
  \epsilon^{\prime\prime}$.

  If we take $\delta_0$ such that $\delta < \delta_0$ implies that
  $\sqrt{\eta(\delta)} < \frac{\epsilon^{\prime\prime}}{2}$ and
  $\sup_{0 \leq s \leq T} |\lambda_\delta^{(\infty)}(s) - s| <
  \frac{\epsilon^{\prime\prime}}{2}$, then for every $y\in\Nzb$ and $s
  \leq \sqrt{\eta(\delta)}$:
  \begin{gather*}
    |\lambda_\delta^{(y)}(t) - s - t| \leq
     |\lambda_\delta^{(y)}(t) - t | + |s| \leq
     \sup_{r \in [0, T]} |\lambda_\delta^{(\infty)}(r) - r | + |s|
     < \epsilon^{\prime\prime},\\
    |t - s - t | = |s| < \epsilon^{\prime\prime}.
  \end{gather*}
  
  So it is valid that:
  \begin{displaymath}
    d(X^\infty(\lambda_\delta^{(y)}(t)-s), X^\infty(t-s)) \leq
     d(X^\infty(\lambda_\delta^{(y)}(t)-s), X^\infty(t))+
     d(X^\infty(t), X^\infty(t-s))
     < \epsilon^{\prime}.
  \end{displaymath}

  Therewith we may conclude that if $0 < \delta < \delta_0$ then:
  \begin{displaymath}
    \sup_{y \in A_\delta}
    \sup_{0 \leq s \leq \sqrt{\eta(\delta)}} \left\lvert
      f(X^\infty(\lambda_\delta^{(y)}(t)-s)) -
      f(X^\infty(t-s))
    \right\rvert < \epsilon .
  \end{displaymath}

  Since $\epsilon > 0$ was arbitrarily chosen, we may conclude that
  this quantity vanishes \qc as $\delta \to 0$.
\end{proof}

\begin{teorema}
  \label{teo:processo-markov}
  The K process is Markovian.
\end{teorema}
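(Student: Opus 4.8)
The plan is to establish the Markov property by passing to the limit in the Markov property of the truncated processes $X_\delta$. Concretely, the goal is to show that for every bounded continuous $f:\Nzb\to\R$, every $0\le s<t$, and every starting point $y$,
\[
  \E\!\left[f(X^y(t))\mid \FF_s\right] = (\Psi_{t-s}f)(X^y(s))\quad\text{a.s.},
\]
where $\FF_s$ is the natural filtration of $X^y$. Since the truncated process $X_\delta^y$ is a Markov pure jump process (Remark \ref{obs:restric-truncados}) with semigroup $\Psi^\delta$, the analogous identity holds for each $\delta>0$; the work is entirely in taking $\delta\to0$ on both sides. It suffices to check the Markov property against test events of the form $\{X^y(t_1)\in B_1,\dots,X^y(t_k)\in B_s, X^y(s)\in B\}$ with $t_1<\dots<s$, or equivalently (by a standard monotone class / functional form argument) to prove
\[
  \E\!\left[g_1(X^y(t_1))\cdots g_k(X^y(s))\, f(X^y(t))\right]
  = \E\!\left[g_1(X^y(t_1))\cdots g_k(X^y(s))\,(\Psi_{t-s}f)(X^y(s))\right]
\]
for bounded continuous $g_1,\dots,g_k,f$, and this is what I would target directly.

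The two sides are obtained from the corresponding truncated identity by two limiting inputs: (i) $X_\delta^y(r)\to X^y(r)$ a.s.\ for each fixed $r$ (Theorem \ref{teo:convergencia-truncados} and Corollary \ref{cor:convergencia-truncados}, together with Remark \ref{obs:ponto-fixo-continuo} which says each fixed deterministic time is a.s.\ a continuity point of $X^y$, so Skorohod convergence yields pointwise convergence at $r$), hence $g_i(X_\delta^y(t_i))\to g_i(X^y(t_i))$ and $f(X_\delta^y(t))\to f(X^y(t))$ a.s.; dominated convergence (all functions bounded) then handles the left-hand side and the ``$f(X^y(t))$'' piece of the right-hand side. (ii) For the right-hand side we also need $(\Psi^\delta_{t-s}f)(X_\delta^y(s))\to(\Psi_{t-s}f)(X^y(s))$ a.s. Here I would split
\[
  |(\Psi^\delta_{t-s}f)(X_\delta^y(s)) - (\Psi_{t-s}f)(X^y(s))|
  \le |(\Psi^\delta_{t-s}f - \Psi_{t-s}f)(X_\delta^y(s))|
   + |(\Psi_{t-s}f)(X_\delta^y(s)) - (\Psi_{t-s}f)(X^y(s))|.
\]
The second term vanishes a.s.\ because $\Psi_{t-s}f$ is continuous (Proposition \ref{prop:quase-feller}) and $X_\delta^y(s)\to X^y(s)$. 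The first term is where Lemma \ref{lema:markov1} enters: one takes $A=V_s^{(\delta_0)}:=\bigcup_{\delta\le\delta_0}\bigcup_{r\in[0,s]}\{X_\delta^y(r)\}\cup\bigcup_{r\in[0,s]}\{X^y(r)\}$ — the set of states visited by the limit process and all the truncations up to a fixed small level — and checks via Lemma \ref{lema:visitados-compacto} (and the fact that the truncated trajectories only visit states with $\gamma_x\ge\delta$, whose visited portion is also controlled) that $A$ satisfies the hypothesis $\#\{x\in A:\gamma_x>\epsilon\}<\infty$ for every $\epsilon$; then $\sup_{y'\in A}|(\Psi^\delta_{t-s}f-\Psi_{t-s}f)(y')|\to0$, and since $X_\delta^y(s)\in A$ a.s.\ for $\delta$ small, the first term vanishes a.s.

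The main obstacle is precisely managing this last point: the evaluation point $X_\delta^y(s)$ of the semigroup difference is itself $\delta$-dependent and random, so one cannot simply invoke pointwise convergence of $\Psi^\delta_{t-s}f$. The device that makes it work is the \emph{uniform}-over-$A$ convergence in Lemma \ref{lema:markov1}, applied to a (random but fixed) countable set $A$ of states that contains all the relevant visited states and provably has only finitely many elements with $\gamma_x$ above any threshold — this is exactly the role of Lemma \ref{lema:visitados-compacto}. Once both sides are shown to converge a.s.\ and the integrands are uniformly bounded, dominated convergence upgrades the truncated identity to the identity for $X^y$, and a routine monotone-class argument promotes the functional form to the conditional-expectation form, giving the Markov property. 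I would also remark that the identity for general initial $y\in\Nz$ reduces to the case $y=\infty$ via Remark \ref{obs:reinicia-infinito}, though handling $y$ directly through Corollary \ref{cor:convergencia-truncados} is equally clean.
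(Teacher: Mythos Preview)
Your proposal is correct and follows essentially the same route as the paper: pass the Markov identity for the truncated processes $X_\delta$ to the limit using Skorohod convergence at a.s.\ continuity points (Remark~\ref{obs:ponto-fixo-continuo}), Proposition~\ref{prop:quase-feller} for the continuity of $\Psi_{t-s}f$, and Lemmas~\ref{lema:visitados-compacto} and~\ref{lema:markov1} to control the random, $\delta$-dependent evaluation point in $\Psi^\delta_{t-s}f(X_\delta(s))$. The only cosmetic difference is that the paper, instead of building your set $A$ as a union over all truncation levels, simply observes that there is a (random) $t'>s$ with $X_\delta(s)\in V_{t'}$ for every small $\delta$ (because $\Gamma_\delta\le\Gamma$ forces the Poisson marks seen by $X_\delta$ up to time $s$ to correspond to visits of $X$ by time $\Gamma(\Xi_{\delta_0}(s))$), and applies Lemma~\ref{lema:markov1} directly with $A=V_{t'}$; this avoids having to check the finiteness hypothesis for your larger union, but the substance is the same.
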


\begin{proof}
  Let us fix $m \geq 1$, $t_1 < t_2 < \ldots < t_{m+1}$ and $f_1,
  \ldots, f_{m+1} : \Nzb \to \R$ bounded continuous functions.

  As $X_\delta$ is Markovian:
  \begin{gather}
    \label{eq:markov-truncado}
    \E \left[
      f_{1}(X_\delta(t_{1}))
      \ldots
      f_{m}(X_\delta(t_{m}))
      f_{m+1}(X_\delta(t_{m+1}))
    \right] \\
    = \E \left[
      f_{1}(X_\delta(t_{1}))
      \ldots
      f_{m}(X_\delta(t_{m}))
      \Psi^\delta_{t_{m+1} - t_{m}} f_{m+1} (X_\delta(t_{m}))
    \right]\notag
  \end{gather}

  Remark \ref{obs:ponto-fixo-continuo} guarantee that $t_1, \ldots,
  t_{m+1}$ are \qc continuity points of the K process.  As convergence
  in the Skorohod topology guarantees pointwise convergence at 
  continuity points and each $f_i$ is continuous, we may conclude
  that $f_i(X_\delta(t_i)) \to f_i(X(t_i))$ \qc as $\delta \to 0$
  for every $i = 1, \ldots, m+1$.

  Using the dominated convergence theorem we conclude that the left
  hand side of \eqref{eq:markov-truncado} converges as $\delta \to 0$
  to:
  \begin{displaymath}
    \E \left[
      f_{1}(X(t_{1}))
      \ldots
      f_{m}(X(t_{m}))
      f_{m+1}(X(t_{m+1}))
    \right].
  \end{displaymath}

  Let us write the right hand side of \eqref{eq:markov-truncado}
  as:
  \begin{equation}
    \label{eq:markov-est-eps}
    \E \left[
      f_{1}(X_\delta(t_{1}))
      \ldots
      f_{m}(X_\delta(t_{m}))
      \Psi_{t_{m+1} - t_{m}} f_{m+1} (X_\delta(t_{m}))
    \right] + \epsilon_\delta.
  \end{equation}

  Using arguments analogous to those employed in the computation of the limit of the
  left hand side of \eqref{eq:markov-truncado}, together with
  Proposition \ref{prop:quase-feller}, we may show that the left hand
  side of \eqref{eq:markov-est-eps} converges as $\delta \to 0$ to:
  \begin{displaymath}
    \E \left[
      f_{1}(X(t_{1}))
      \ldots
      f_{m}(X(t_{m}))
      \Psi_{t_{m+1} - t_{m}} f_{m+1} (X(t_{m}))
    \right].
  \end{displaymath}

  We are left with showing that $|\epsilon_\delta| \to 0$ as $\delta \to 0$.
  For simplicity of notation, let us denote $s = t_{m+1} - t_m$, $t =
  t_m$ and $g = f_{m+1}$. Using the fact that $f_1, \ldots, f_m$ are
  bounded, we may write:
  \begin{displaymath}
    |\epsilon_\delta| \leq \text{const}
    \left\lvert \E \left[
        \Psi_{s}^\delta g (X_\delta(t)) -
        \Psi_{s} g (X_\delta(t))
      \right]
    \right\rvert.
  \end{displaymath}

  As the random variable inside the expected value is bounded, if we show
  that it converges almost surely to zero, then the result will follow
  from the dominated convergence theorem.

  If $V_t$ is as in Lemma \ref{lema:visitados-compacto}, note that
  there exists $t^{\prime} > 0$ such that $X_\delta(t) \in V_{t^\prime}$
  for every $\delta$.  Thus
  \begin{align*}
    \left\lvert \Psi_{s}^\delta g (X_\delta(t)) - \Psi_{s} g (X_\delta(t))
    \right\rvert
    &\leq \sup_{y \in V_{t^\prime}} \left\lvert \Psi_{s}^\delta g (y) - \Psi_{s} g (y)
    \right\rvert.
  \end{align*}

  Lemma \ref{lema:visitados-compacto} guarantees that the condition
  of Lemma \ref{lema:markov1} is valid for almost every
  $V_{t^\prime}$, so the above quantity vanishes \qc as $\delta
  \to 0$.
  
  Thus we can conclude that
  \begin{displaymath}
    \E \left[
      f_{1}(X(t_{1}))
      \ldots
      f_{m}(X(t_{m}))
      f_{m+1}(X(t_{m+1}))
    \right] \notag\\
    = \E \left[
      f_{1}(X(t_{1}))
      \ldots
      f_{m}(X(t_{m}))
      \Psi_{t_{m+1} - t_{m}} f_{m+1} (X(t_{m}))
    \right],
  \end{displaymath}
and the proof is complete.\qedhere
\end{proof}

\section{Transition probabilities}\label{trans}

\begin{definicao}
  \label{def:funcoes-transicao}
  For $x, y \in \Nzb$ and $t \geq 0$, let us define:
  \begin{equation}
    \label{eq:funcoes-transicao}
    p_{x y} (t) := \Prob(X^x(t) = y).
  \end{equation}
\end{definicao}

\begin{proposicao}
  \label{prop:trans-continuas}
  For every $x, y \in \Nzb$, $p_{x y} (\bullet)$ is a continuous
  function on $(0, \infty)$.
\end{proposicao}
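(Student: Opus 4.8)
The plan is to express $p_{xy}(t)$ as a limit of the corresponding transition probabilities $p_{xy}^\delta(t):=\Prob(X_\delta^x(t)=y)$ for the truncated processes, for which continuity on $(0,\infty)$ is elementary, and then argue that the convergence is locally uniform in $t$, so that continuity is preserved in the limit. For this I would fix $x\in\Nzb$ and $y\in\Nz$ (the case $y=\infty$ is immediate from $p_{x\infty}(t)=1-\sum_{z\in\Nz}p_{xz}(t)$ together with dominated convergence on the series, once the sites-case is done; in fact $\{x\}$ and $\{\infty\}$ are both open and closed in the metric \eqref{eq:metrica} restricted to each fiber, so $y\mapsto\ind\{y=x'\}$ is a bounded continuous function for each $x'\in\Nzb$). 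Then $p_{xy}(t)=\Psi_t(\ind\{\cdot=y\})(x)$, and likewise $p_{xy}^\delta(t)=\Psi_t^\delta(\ind\{\cdot=y\})(x)$. For the truncated process, which is a Markov pure jump process with countably many states and bounded-on-compact jump rates (Remark \ref{obs:restric-truncados}), the transition functions $p_{xy}^\delta(\bullet)$ are the standard ones and are $C^\infty$ on $(0,\infty)$; in particular they are continuous there. So the remaining task is to upgrade the pointwise-in-$t$ statement to something that transfers continuity.

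The key technical step is: for each fixed $t_0>0$ and each $x$, the family $t\mapsto p_{xy}^\delta(t)$ converges to $p_{xy}(t)$ uniformly for $t$ in a neighborhood of $t_0$, as $\delta\to0$. I would obtain this from Corollary \ref{cor:convergencia-truncados}: on an event of probability one there are increasing Lipschitz bijections $\lambda_\delta^{(x)}$ with $\sup_{0\le s\le T}|\lambda_\delta^{(x)}(s)-s|\to0$ and $\sup_{0\le s\le T}d(X^x(\lambda_\delta^{(x)}(s)),X_\delta^x(s))\to0$. Pick $T>t_0+1$. Then for $t$ near $t_0$, comparing $X_\delta^x(t)$ with $X^x(\lambda_\delta^{(x)}(t))$ and using that $d$ takes the value $0$ or at least $\gamma_y>0$ on pairs involving $y$, we get that for $\delta$ small enough $X_\delta^x(t)=y$ forces $X^x(\lambda_\delta^{(x)}(t))=y$ off a set whose probability is $o(1)$ uniformly in $t\le T$ (this is exactly the ingredient already exploited in the proof of Lemma \ref{lema:markov1}). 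Hence $|p_{xy}^\delta(t)-\Prob(X^x(\lambda_\delta^{(x)}(t))=y)|\to0$ uniformly in $t\in[0,T]$. It remains to handle $|\Prob(X^x(\lambda_\delta^{(x)}(t))=y)-\Prob(X^x(t)=y)|$, which I would control using the càdlàg property of $X^x$ (Proposition \ref{prop:processo-cadlag}) and the fact that $X^x$ has, almost surely, only countably many discontinuities, all of which are absolutely continuous random variables (Remark \ref{obs:ponto-fixo-continuo}); combined with $\lambda_\delta^{(x)}(t)\to t$ uniformly this gives that $\ind\{X^x(\lambda_\delta^{(x)}(t))=y\}\to\ind\{X^x(t)=y\}$ almost surely for each fixed $t$, and a standard equicontinuity argument in $t$ upgrades this to a uniform-in-$t$-near-$t_0$ statement after an application of dominated convergence. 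A cleaner route for this last piece: show directly that $t\mapsto p_{xy}(t)$ is continuous at $t_0$ by noting that $p_{xy}(t_0+h)=\E[\,p_{X^x(t_0),y}(h)\,]$ if $h>0$ by the Markov property (Theorem \ref{teo:processo-markov}), so right-continuity at every $t_0>0$ reduces to $p_{zy}(h)\to\ind\{z=y\}$ as $h\downarrow0$ uniformly over the (a.s.\ $\gamma$-small-except-finitely-many) set of possible values $z=X^x(t_0)$, which in turn follows from the quasi-Feller Proposition \ref{prop:quase-feller} and $p_{zy}(0)=\ind\{z=y\}$; left-continuity at $t_0>0$ follows symmetrically by writing $p_{xy}(t_0)=\E[\,p_{X^x(t_0-h),y}(h)\,]$ and using that $t_0$ is a.s.\ a continuity point of $X^x$ together with the same uniform small-time estimate.

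The main obstacle I anticipate is precisely the uniformity in $z$ of the small-time estimate $p_{zy}(h)\to\ind\{z=y\}$: the state space is not compact, and $p_{zy}(h)$ need not go to $\ind\{z=y\}$ uniformly over all $z\in\Nz$ (for $z$ with $\gamma_z$ tiny the process leaves $z$ almost instantly). The resolution is that one never needs uniformity over all of $\Nz$, only over the random set $V_{t_0}$ of states actually visited by time $t_0$ (or $V_{t_0-h_0}$ for the left-continuity argument), and Lemma \ref{lema:visitados-compacto} says this set contains only finitely many states with $\gamma_z>\epsilon$ for any $\epsilon>0$; for the finitely many "large-$\gamma$" states one uses the pure-jump structure directly, and for the cofinitely many "small-$\gamma$" states one absorbs them into $\{\infty\}$ at cost $o(1)$, exactly as in the proof of Lemma \ref{lema:markov1}. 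I would therefore organize the write-up around the decomposition $V_{t_0}=(V_{t_0}\cap\Nz_\epsilon)\cup(V_{t_0}\setminus\Nz_\epsilon)$ and let $\epsilon\to0$ after $h\to0$, which is the same two-parameter bookkeeping already used repeatedly in Section \ref{markov}.
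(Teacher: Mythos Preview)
The paper's proof is a single sentence: it cites Theorem~1 from Section~II of Chung's book, which is the general fact that for any continuous-time Markov process on a countable state space the transition functions $p_{xy}(\cdot)$ are continuous on $(0,\infty)$. No use is made of the specific construction, the truncated processes, or any of the lemmas in Section~\ref{markov}.

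Your approach is therefore genuinely different: you attempt a hands-on proof using the approximation $X_\delta\to X$ and the Markov property already established. The gain of your route would be self-containment; the cost is substantial, and two steps are not yet under control. First, the reduction of the case $y=\infty$ to $y\in\Nz$ via $p_{x\infty}(t)=1-\sum_{z\in\Nz}p_{xz}(t)$ and ``dominated convergence on the series'' needs a summable majorant $g_z\ge\sup_{t\in K}p_{xz}(t)$ on a neighborhood $K$ of $t_0$, which you have not produced; Fatou gives only upper semicontinuity of $p_{x\infty}$. Second, in the ``cleaner route'' the passage from pointwise convergence $\ind\{X^x(\lambda_\delta^{(x)}(t))=y\}\to\ind\{X^x(t)=y\}$ (valid at each fixed $t$ by Remark~\ref{obs:ponto-fixo-continuo}) to a statement \emph{uniform in $t$ near $t_0$} is asserted via ``a standard equicontinuity argument'', but indicator functions of visits to a fixed site are not equicontinuous along c\`adl\`ag paths, and Remark~\ref{obs:ponto-fixo-continuo} only gives almost-sure continuity at a \emph{fixed} time, not joint control over a time interval. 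These gaps are fixable, but the clean fix is precisely the general Chapman--Kolmogorov argument in Chung that the paper cites; once you have the Markov property and the standard transition function setup, continuity on $(0,\infty)$ is a theorem about all such processes, and there is no need to revisit the construction.
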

\begin{proof}
  This is one of the items %{\tt <which item?>} 
  of Theorem 1 from Section II of \cite{chung:67}.
\end{proof}

\begin{observacao}
  \label{obs:trans-facil}
  Note that Proposition \ref{prop:trans-continuas} states
  that the transition functions are continuous outside of the origin.
  It is easy to prove that $p_{x x} (t) \to 1$ as $t \searrow 0$ for every
  $x \in \Nz$; a harder problem arises when $x = \infty$.
\end{observacao}

\begin{proposicao}
  \label{prop:trans-c1}
  If $c > 0$, then:
  \begin{equation}
    \label{eq:trans-c1}
    \lim_{t \searrow 0} p_{\infty \infty} (t) = 1.
  \end{equation}
\end{proposicao}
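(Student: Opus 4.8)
The plan is to bound $1-p_{\infty\infty}(t)=\Prob(X^\infty(t)\in\Nz)=\sum_{x\in\Nz}p_{\infty x}(t)$ by an explicit quantity that vanishes as $t\downarrow0$, exploiting the $ct$ term in the Clock Process in an essential way. Fix $x\in\Nz$. By Definition~\ref{def:processo} (applied with $y=\infty$, so $\gamma_\infty=0$ and the $T_0$ term disappears), $X^\infty(t)=x$ iff $t\in[\Gamma^\infty(\sigma^x_i-),\Gamma^\infty(\sigma^x_i))$ for exactly one $i$, and $\Gamma^\infty(\sigma^x_i)=\Gamma^\infty(\sigma^x_i-)+\gamma_xT^x_i$. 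First I would condition on everything except $T^x_i$: since $\Gamma^\infty(\sigma^x_i-)$ does not involve $T^x_i$ and $T^x_i$ is an independent rate-$1$ exponential, summing over the disjoint events in $i$ gives
\[
  p_{\infty x}(t)=\E\Big[\sum_{i\ge1}\ind\{\Gamma^\infty(\sigma^x_i-)\le t\}\,
  e^{-(t-\Gamma^\infty(\sigma^x_i-))/\gamma_x}\Big].
\]
Next I would apply Mecke's (Campbell's) formula to the Poisson process $N_x$ of rate $\lambda_x$, noting that inserting an extra mark at a time $s$ does not change $\Gamma^\infty(s-)$; together with $\Gamma^\infty(s-)=\Gamma^\infty(s)$ a.s.\ for fixed $s$ this yields
\[
  p_{\infty x}(t)=\lambda_x\int_0^\infty\E\big[\ind\{\Gamma^\infty(s)\le t\}\,e^{-(t-\Gamma^\infty(s))/\gamma_x}\big]\,ds
  =\lambda_x\int_{[0,t]}e^{-(t-y)/\gamma_x}\,U(dy),
\]
where $U(dy):=\int_0^\infty\Prob(\Gamma^\infty(s)\in dy)\,ds$ is the occupation (potential) measure of the Clock Process.

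The key estimate, and the only place where the hypothesis $c>0$ enters, is that $U$ is absolutely continuous with density at most $1/c$. Indeed, by Definition~\ref{def:Gamma} and monotonicity of the jump part, $\Gamma^\infty(s)-\Gamma^\infty(s')\ge c(s-s')$ for $s\ge s'$; hence if $s'<s$ both satisfy $\Gamma^\infty(\cdot)\in(y,y+h]$ then $c(s-s')\le\Gamma^\infty(s)-\Gamma^\infty(s')\le h$, so $\{s\ge0:\Gamma^\infty(s)\in(y,y+h]\}$ is an interval of length at most $h/c$, deterministically; taking expectations, $U((y,y+h])\le h/c$. Summing the formula for $p_{\infty x}(t)$ over $x$ (Tonelli) and using this bound,
\[
  \sum_{x\in\Nz}p_{\infty x}(t)=\int_{[0,t]}\Big(\sum_{x\in\Nz}\lambda_x e^{-(t-y)/\gamma_x}\Big)U(dy)
  \le\frac1c\int_0^t\sum_{x\in\Nz}\lambda_x e^{-(t-y)/\gamma_x}\,dy
  =\frac1c\sum_{x\in\Nz}\lambda_x\gamma_x\big(1-e^{-t/\gamma_x}\big).
\]

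To conclude, I would let $t\downarrow0$ in the last expression: each term satisfies $0\le\lambda_x\gamma_x(1-e^{-t/\gamma_x})\le\lambda_x\gamma_x$, the majorant is summable by the second part of \eqref{eq:soma-lambda-gamma}, and every term tends to $0$; dominated convergence then gives $\sum_{x\in\Nz}p_{\infty x}(t)\to0$, i.e.\ $p_{\infty\infty}(t)\to1$. (The argument transparently fails for $c=0$, where the density bound on $U$ is unavailable, consistent with the fact that $\infty$ is then a.s.\ not occupied.) The main obstacle I anticipate is making the second step fully rigorous: justifying the use of Mecke's formula with the dependence of $\Gamma^\infty(\sigma^x_i-)$ on the configuration of $N_x$, checking that the relevant expectations are finite so the interchanges of sum, integral and expectation are licit, and carefully handling the left limits $\Gamma^\infty(s-)$ versus $\Gamma^\infty(s)$ (the latter being harmless for fixed $s$ by Remark~\ref{obs:Gamma-cadlag}). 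The absolute continuity of $U$ with the sharp constant $1/c$ is the other point requiring care, though the one-line monotonicity estimate above handles it.
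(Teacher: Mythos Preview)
Your argument is correct and takes a genuinely different route from the paper's. The paper argues indirectly: it first shows $\int_0^t p_{\infty\infty}(s)\,ds>0$ when $c>0$ (since on $\{\Gamma^\infty(s)<t\}$ the process spends at least $cs$ units of time at $\infty$), picks an $s$ with $p_{\infty\infty}(s)>0$, and then uses the Markov property together with continuity of the transition functions (Proposition~\ref{prop:trans-continuas}) and Remark~\ref{obs:trans-facil} to show that any subsequential limit $u$ of $p_{\infty\infty}(t_n)$ as $t_n\searrow0$ satisfies $p_{\infty\infty}(s)=u\,p_{\infty\infty}(s)$, hence $u=1$. Your approach, by contrast, is direct and quantitative: it produces the explicit bound $1-p_{\infty\infty}(t)\le c^{-1}\sum_x\lambda_x\gamma_x(1-e^{-t/\gamma_x})$ and relies neither on the Markov property from Section~\ref{markov} nor on Chung's continuity result, only on the construction itself. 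What the paper's proof buys is that it avoids the Mecke/Campbell machinery and any explicit formula for $p_{\infty x}(t)$; what yours buys is self-containment and a rate of convergence.

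Regarding the Mecke step you flag as the main obstacle: it becomes routine once you view $\{(\sigma_i^x,T_i^x)\}_{i\ge1}$ as a marked Poisson process on $\R^+\times\R^+$ with intensity $\lambda_x e^{-\tau}\,d\sigma\,d\tau$ (independently over $x$). In that picture $\Gamma^\infty(s-)$ is a measurable function of the marked points with first coordinate strictly below $s$, so inserting a point at $(s,\tau)$ leaves $\Gamma^\infty(s-)$ unchanged and the Slivnyak--Mecke identity applies directly to the nonnegative integrand; Tonelli then justifies every interchange, and $\Gamma^\infty(s-)=\Gamma^\infty(s)$ a.s.\ for fixed $s$ is exactly Remark~\ref{obs:Gamma-cadlag}. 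The density bound on $U$ is also clean as you wrote it: $\Gamma^\infty$ has drift $c>0$, so preimages of intervals of length $h$ have length at most $h/c$ deterministically, giving $U((y,y+h])\le h/c$ and hence absolute continuity with density $\le 1/c$.
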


\begin{proof}
  Consider the function:
  \begin{displaymath}
    \theta(t) := \int_0^t \ind \{ X^\infty (s) = \infty \} ds.
  \end{displaymath}

  Since $\Gamma^{\infty}(s) \to 0$ as $s \searrow 0$ (Remark
  \ref{obs:Gamma-cadlag}), then for a fixed $t > 0$ there exists an $s >
  0$ such that $\Prob(\Gamma^{\infty}(s) < t) > 0$.  By construction,
  on this event, $\theta(t) \geq c s$. From this we can conclude
  that $\E [\theta(t)] > 0$. Applying Fubini's theorem we obtain that:
  \begin{displaymath}
    0 < \E[\theta(t)] =  \int_0^t p_{\infty \infty} (s) ds
  \end{displaymath}

  Therefore the set $\{t > 0: p_{\infty \infty}(t) > 0 \}$ has a
  positive Lebesgue measure, so it is not empty. Let us fix an $s > 0$
  such that $p_{\infty \infty} (s) > 0$.

  Take an arbitrary sequence $(t_n)$, such that $t_n \searrow 0$ and
  $p_{\infty \infty} (t_n)$ converges to a real number $u$ as $n
  \to \infty$.  If we show that $u = 1$, then \eqref{eq:trans-c1} 
  follows.

  Using the Markov property and Proposition \ref{prop:trans-continuas},
  we can write:
  \begin{displaymath}
    p_{\infty\infty}(s) = \lim_{n \to \infty} p_{\infty \infty}(s+t_n) = \lim_{n \to
      \infty} \sum_{x \in \Nzb} p_{\infty x}(s) p_{x \infty} (t_n).
  \end{displaymath}

  By Remark \ref{obs:trans-facil}, if $x \in \Nz$ then $p_{x
    \infty}(t_n) \to 0$ as $n \to \infty$. Since each term in the sum
  can be bounded by $p_{\infty x} (s)$, which is summable, we can
  apply the dominated convergence theorem to conclude that:
    \begin{displaymath}
      p_{\infty\infty}(s) = p_{\infty \infty}(s)u.
    \end{displaymath}
    Since $p_{\infty \infty}(s) > 0$, we conclude that $u = 1$.
\end{proof}

\begin{proposicao}
  \label{prop:trans-c0}
  If $c = 0$, $x \in \Nzb$ and $t > 0$, then:
  \begin{displaymath}
    %\label{eq:trans-c0}
    p_{x \infty} (t) = 0.
  \end{displaymath}
\end{proposicao}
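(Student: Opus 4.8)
The plan is to show that, when $c=0$, the Lebesgue measure of the $\infty$-set of $X^\infty$ on any interval $[0,t]$ is almost surely zero, and then transfer this to the statement $p_{x\infty}(t)=0$ using Fubini and the Markov property. The key structural fact is that when $c=0$ the Clock Process is $\Gamma^\infty(t)=\sum_{z\in\Nz}\sum_{i=1}^{N_z(t)}\gamma_z T_i^z$, and by construction the range of $\Gamma^\infty$ — that is, the set $\bigcup_{x,i}[\Gamma^\infty(\sigma_i^x-),\Gamma^\infty(\sigma_i^x))$ of times at which $X^\infty$ sits at some site in $\Nz$ — has full Lebesgue measure in $[0,\infty)$. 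Indeed, the total length of the site-intervals up to the $n$-th Poissonian mark (in the global ordering of $\{\sigma_i^x\}$) is exactly $\Gamma^\infty$ evaluated at that mark, which increases to $\infty$; since $\Gamma^\infty$ is \qc strictly increasing and càdlàg with no drift part, the complement of its range has measure zero. Hence $\theta(t):=\int_0^t\ind\{X^\infty(s)=\infty\}\,ds=0$ almost surely for every fixed $t>0$, and so (by monotonicity) simultaneously for all $t$.

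First I would make the measure-zero claim precise: for $c=0$, the set $G:=\bigcup_{x\in\Nz}\bigcup_{i\geq1}[\Gamma^\infty(\sigma_i^x-),\Gamma^\infty(\sigma_i^x))$ satisfies $|[0,\infty)\setminus G|=0$ almost surely. To see this, let $0<\tau_1<\tau_2<\cdots$ be the increasing ordering of all the Poissonian marks $\{\sigma_i^x:x\in\Nz,\,i\geq1\}$ (this ordering exists \qc; recall the marks form a countable dense set in $(0,\infty)$ when $\sum_x\lambda_x=\infty$, but on each bounded interval only finitely many $\tau_k$ correspond to sites in $\Nz_\delta$). For each $k$, the jump of $\Gamma^\infty$ at $\tau_k$ equals $\gamma_{z}T^{z}_{i}$ where $(z,i)$ is the site-mark pair with $\sigma_i^z=\tau_k$, and $[\Gamma^\infty(\tau_k-),\Gamma^\infty(\tau_k))$ is precisely the corresponding site-interval. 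Since $\Gamma^\infty$ has no continuous part when $c=0$ — it is a pure-jump increasing process, being a sum of its jumps — we get $\Gamma^\infty(u)=\sum_{k:\tau_k\le u}(\Gamma^\infty(\tau_k)-\Gamma^\infty(\tau_k-))$ for all $u$, hence $\sum_{k:\Gamma^\infty(\tau_k)\le M}|\,[\Gamma^\infty(\tau_k-),\Gamma^\infty(\tau_k))\,| \to M$ as we exhaust the marks, which forces $|[0,M]\setminus G|=0$ for every $M$. I would justify the "no continuous part" assertion carefully, as it is the crux: $\Gamma^\infty$ increases only by accumulating the contributions $\gamma_z T_i^z$ at the discrete times $\sigma_i^z$, and between consecutive values of the partial sums there is no mechanism producing growth, so $\Gamma^\infty$ is constant off the (at most countable) set of its jump times — this is immediate from Definition~\ref{def:Gamma} with $c=0$.

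Next I would pass from the sample-path statement to the probability statement. Fix $t>0$. Applying Fubini's theorem to $\E[\theta(t)]$, exactly as in the proof of Proposition~\ref{prop:trans-c1} but now with the conclusion $\theta(t)=0$ \qc, gives
\[
0=\E[\theta(t)]=\int_0^t p_{\infty\infty}(s)\,ds,
\]
and since $p_{\infty\infty}(\bullet)$ is continuous on $(0,\infty)$ by Proposition~\ref{prop:trans-continuas}, it follows that $p_{\infty\infty}(s)=0$ for every $s>0$. This handles the case $x=\infty$. For $x\in\Nz$, Remark~\ref{obs:reinicia-infinito} gives $X^x(t)=X^\infty(t-\gamma_xT_0)$ on $\{\gamma_xT_0<t\}$ and $X^x(t)=x\ne\infty$ otherwise, so
\[
p_{x\infty}(t)=\Prob\big(X^\infty(t-\gamma_xT_0)=\infty,\ \gamma_xT_0<t\big)
=\int_0^t p_{\infty\infty}(t-r)\,\Prob(\gamma_xT_0\in dr)=0,
\]
using the independence of $T_0$ from the rest of the construction and the already-established vanishing of $p_{\infty\infty}$.

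\textbf{Main obstacle.} The delicate point is the measure-theoretic claim that the range of $\Gamma^\infty$ has full measure when $c=0$ — equivalently, that $\Gamma^\infty$ has no continuous (drift-like) component. This is intuitively obvious from the explicit formula, but writing it rigorously requires care with the reordering of the doubly-indexed marks $\{\sigma_i^x\}$ into a single increasing sequence (legitimate \qc on bounded intervals after truncation, then taking $\delta\to0$) and with the interchange of summation establishing $\Gamma^\infty(u)=\sum_{k:\tau_k\le u}\Delta\Gamma^\infty(\tau_k)$. One clean way to avoid reordering issues is to work at truncation level $\delta$: on $[0,T]$ the truncated clock $\Gamma_\delta^\infty$ has finitely many jumps, its range up to time $T$ misses only a set of measure $T-\Gamma_\delta^\infty(\text{last relevant mark})$-type size within the $\Gamma^\infty$ scale, and letting $\delta\to0$ together with Theorem~\ref{teo:convergencia-truncados} (or just monotone convergence of $\Gamma_\delta^\infty\uparrow\Gamma^\infty$) closes the gap; I expect this truncation route to be the least painful. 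Everything after the measure-zero claim is routine.
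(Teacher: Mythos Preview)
Your proposal is correct and follows the same overall architecture as the paper: show that $\theta(t):=\int_0^t\ind\{X^\infty(s)=\infty\}\,ds=0$ \qc, apply Fubini to get $\int_0^t p_{\infty\infty}(s)\,ds=0$, invoke continuity (Proposition~\ref{prop:trans-continuas}) to conclude $p_{\infty\infty}\equiv0$ on $(0,\infty)$, and then extend to general $x$ via Remark~\ref{obs:reinicia-infinito}.

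The only difference is in how the key step $\theta(t)=0$ is executed. You frame it as ``the jump intervals of $\Gamma^\infty$ have full measure because $\Gamma^\infty$ has no continuous part,'' and then correctly flag that the global ordering $\tau_1<\tau_2<\cdots$ of the Poisson marks does not exist (the marks are dense when $\sum_x\lambda_x=\infty$), pointing to truncation as the fix. The paper skips the range discussion entirely and implements the truncation directly: with $\Xi$ the right-continuous inverse of $\Gamma^\infty$, it writes
\[
\theta(t)\;\le\;\theta_\delta(t)\;:=\;\int_0^t\ind\{X^\infty(s)\notin\Nz_\delta\}\,ds
\;=\;\sum_{x\notin\Nz_\delta}\sum_{i=1}^{\Xi(t)}\gamma_x T_i^x,
\]
and observes that as $\delta\searrow0$ this is the tail of the almost surely convergent series $\sum_{x}\sum_{i\le\Xi(t)}\gamma_xT_i^x=\Gamma^\infty(\Xi(t))<\infty$, hence $\theta_\delta(t)\to0$. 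This is exactly your suggested ``truncation route,'' carried out in one line; your more conceptual framing is fine but, as you anticipated, the direct tail-sum bound is the cleanest way to write it. (A minor terminological slip: the set $G=\bigcup_{x,i}[\Gamma^\infty(\sigma_i^x-),\Gamma^\infty(\sigma_i^x))$ is the \emph{complement} of the closure of the range of $\Gamma^\infty$, not the range itself.)
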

\begin{proof}
  
  For $\delta \geq 0$ we define:
  \begin{align*}
    \theta_\delta(t) := \int_0^t \ind \left\{ X^\infty_0(s) \not\in
      \Nz_\delta \right\} d s
  \end{align*}

  Note that $\theta_\delta(t)$ is monotonic in $\delta$. Thus,
  if $\Xi$ is the right-continuous inverse for $\Gamma^\infty$ then:
  \begin{align*}
    \theta_0(t) \leq \theta_\delta(t) = \sum_{x \not\in \Nz_\delta}
    \sum_{i=1}^{\Xi(t)} \gamma_x T^x_i.
  \end{align*}

  As $\delta \searrow 0$ we are summing over the tail of a convergent
  series, thus $\lim_{\delta \searrow 0} \theta_\delta(t) = 0$
  \qc, so $\theta_0(t) =  0$ \qc and $\E[\theta_0(t)] = 0$. Using
  Fubini's theorem we can write
  \begin{align*}
    0 = \E\left[ \int_0^t \ind \left\{ X^\infty_0(s) = \infty
      \right\} ds \right]
    = \int_0^t \Prob \left\{ X^\infty_0(s) = \infty
    \right\} ds
    = \int_0^t p_{\infty \infty} (s) ds,
  \end{align*}
  from which we conclude that $p_{\infty \infty} (t) = 0$ for Lebesgue
  almost all point $t > 0$. Using the continuity of $p_{\infty
    \infty}$ (Proposition \ref{prop:trans-continuas}) and Remark
  \ref{obs:reinicia-infinito} we conclude this proposition.
\end{proof}

\paragraph{Transition rates}
  Let us consider the transition rates of the K process, given for $x, y \in
  \Nzb$ by:
  \begin{displaymath}
    q_{x y} = \lim_{t \searrow 0} \frac{p_{x y}(t) - \ind\{x = y\}}{t}.
  \end{displaymath}
  Most of the remainder of this section will be dedicated to proving that
  these limits exist and computing their values for different choices
  of $x$ and $y$. At the very end we will use that information to 
  give the invariant measure for the K process.
%\end{definicao}

\begin{proposicao}
  \label{prop:taxa-x-y}
  For every $x \in \Nz$ and $A \subset \Nz\setminus\{x\}$
  such that $\sum_{y \in A} \lambda_y < \infty$:
  \begin{equation}
    \label{eq:taxa-x-y}
    \lim_{t \searrow 0} \frac{\Prob(X^x(t) \in A)}{t} = 0.
  \end{equation}
  As a particular case, taking $A = \{y\}$, $y \in \Nz\setminus \{x\}$, we have
  that $q_{x y} = 0$.
\end{proposicao}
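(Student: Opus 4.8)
The plan is to relate the event $\{X^x(t)\in A\}$ to the Poissonian marks associated with the sites in $A$, using the construction in Definition~\ref{def:processo}, and then to extract a bound that is $o(t)$ as $t\searrow 0$. Recall that for the process started at $x\in\Nz$, by Remark~\ref{obs:reinicia-infinito} we have $X^x(t+\gamma_x T_0)=X^\infty(t)$, so it suffices to control $\Prob(X^\infty(s)\in A)$ for small $s$; indeed $\Prob(X^x(t)\in A)\le\Prob(X^\infty(t)\in A)$ since the occupation intervals of $A$-sites are only pushed later in time by the delay $\gamma_x T_0$. So from now on I work with $X^\infty$. By Definition~\ref{def:processo}, $\{X^\infty(t)\in A\}$ is contained in the event that $t$ lies in some interval $[\Gamma^\infty(\sigma_i^y-),\Gamma^\infty(\sigma_i^y))$ with $y\in A$, $i\ge 1$; in particular this forces $\Gamma^\infty(\sigma_i^y-)\le t$ for some such $y,i$, hence some Poissonian mark $\sigma_i^y$ with $y\in A$ must satisfy $\Gamma^\infty(\sigma_i^y-)\le t$.

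First I would bound: since $\Gamma^\infty$ is strictly increasing (Remark~\ref{obs:Gamma-cadlag}) and $\Gamma^\infty(\sigma_i^y-)\ge \gamma_y T_i^y\,\ind\{\text{that mark is the first one at }y\}$-type lower bounds are available, but more cleanly, $\Gamma^\infty(\sigma_i^y-)\ge \gamma_y T_i^y$ is false in general; instead use that the clock increment contributed by the mark at $\sigma_i^y$ itself is $\gamma_y T_i^y$, so being in the occupation interval of that mark requires $t\ge\Gamma^\infty(\sigma_i^y-)$ and the interval has length $\gamma_y T_i^y$. The simplest sufficient bound is
\begin{equation*}
  \Prob(X^\infty(t)\in A)\ \le\ \Prob\Bigl(\exists\, y\in A,\ i\ge1:\ \sigma_i^y\le \Xi(t)\Bigr)
  \ \le\ \Prob\Bigl(\sum_{y\in A} N_y(\Xi(t))\ge 1\Bigr),
\end{equation*}
where $\Xi$ is the right-continuous inverse of $\Gamma^\infty$; but $\Xi(t)$ is random, so I would instead condition on the first mark among all sites in $A$. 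Let $\tau_A:=\min\{\sigma_i^y:\ y\in A,\ i\ge1\}$, which by the independence and superposition of Poisson processes is exponential of rate $\Lambda_A:=\sum_{y\in A}\lambda_y<\infty$. Being in an $A$-occupation interval by real time $t$ forces $\Gamma^\infty(\tau_A-)\le t$; and $\Gamma^\infty(\tau_A-)$ is the clock value accumulated strictly before the first $A$-mark, which is a sum over all marks of all sites $z$ (including $z\notin A$) occurring before $\tau_A$, plus $ct$. Keeping only, say, the contribution $\gamma_{y^\star}T^{\,y^\star}$ of the first $A$-mark is not available (it's at $\tau_A$, not before); so the clean route is:
\begin{equation*}
  \Prob(X^\infty(t)\in A)\ \le\ \Prob(\tau_A\le \Xi(t))
  \ =\ \Prob\bigl(\Gamma^\infty(\tau_A-)\le t\bigr)
  \ \le\ \Prob\bigl(\tau_A\le t/c\bigr)\ \ \text{if }c>0,
\end{equation*}
using $\Gamma^\infty(u)\ge cu$, which gives $\le 1-e^{-\Lambda_A t/c}\sim (\Lambda_A/c)\,t$ — that is $O(t)$, not $o(t)$, so this crude bound is not enough and the $c=0$ case is not covered at all.

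The real argument must therefore use that, conditionally on $\tau_A=u$, the occupation interval of the first $A$-mark starts at real time $\Gamma^\infty(u-)$, which already includes the full clock contributions $\gamma_z T_i^z$ of the infinitely many non-$A$ marks $\sigma_i^z<u$; since $\sum_z\lambda_z\gamma_z<\infty$ but $\sum_z\lambda_z=\infty$, the expected clock value $\E[\Gamma^\infty(u-)\mid\tau_A=u]$ grows like a constant times $u$ plus $cu$, but crucially, on the event $\{\tau_A\le\text{something small}\}$ the clock has barely advanced. I would make this precise as follows: condition on $\tau_A$, and note $\Prob(X^\infty(t)\in A)\le\Prob(\Gamma^\infty(\tau_A-)\le t)$; then write $\Gamma^\infty(\tau_A-)=\gamma_{y^\star}\cdot 0 + (\text{clock from non-}A\text{ marks before }\tau_A)+c\tau_A$ where the middle term, conditioned on $\tau_A=u$, is distributed as $\Gamma^{\infty,A^c}(u)$ — the analogous clock built only from the independent Poisson processes $\{N_z:z\notin A\}$, which still satisfies $\sum_{z\notin A}\lambda_z=\infty$. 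Thus $\{\Gamma^\infty(\tau_A-)\le t\}\subseteq\{\Gamma^{\infty,A^c}(\tau_A)\le t\}$, and by the absolute continuity and the fact that $\Gamma^{\infty,A^c}(u)/u\to\infty$ in probability as $u\searrow0$ is false — rather $\Gamma^{\infty,A^c}(u)\to0$ a.s. Hmm: the point I actually want is that $\Prob(\Gamma^{\infty,A^c}(\tau_A)\le t)=o(t)$. Since $\tau_A\perp\Gamma^{\infty,A^c}$ and $\tau_A$ is exponential$(\Lambda_A)$, this equals $\Lambda_A\int_0^\infty e^{-\Lambda_A u}\Prob(\Gamma^{\infty,A^c}(u)\le t)\,du$; and $\Prob(\Gamma^{\infty,A^c}(u)\le t)\to0$ as $t\searrow0$ for each fixed $u>0$ (by Remark~\ref{obs:Gamma-cadlag}, $\Gamma^{\infty,A^c}(u)>0$ a.s.), with a dominated-convergence argument yielding the $o(1)$; the extra factor $t$ in the statement comes from first restricting to $\tau_A\le t$ (contributing $O(t)$) when the clock contribution is genuinely tiny, and handling $\tau_A>t$ by $\Prob(\Gamma^{\infty,A^c}(\tau_A)\le t,\ \tau_A>t)\le\sup_{u>t}\Prob(\Gamma^{\infty,A^c}(u)\le t)=o(t)$ — here one uses monotonicity of $\Gamma^{\infty,A^c}$ and that $\Prob(\Gamma^{\infty,A^c}(t)\le t)=o(t)$, which follows because $\Gamma^{\infty,A^c}(t)\ge\sum_{z\notin A, \gamma_z>\epsilon}\gamma_z T_i^z\ge\epsilon S_{N}$ as in Lemma~\ref{lema:visitados-compacto}, forcing at least $N$ non-$A$ marks before real time $t$ with the clock staying below $t$, an event of probability $o(t^N)$. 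The main obstacle is exactly this last estimate: squeezing a genuine $o(t)$ (not merely $O(t)$) out of the clock, and I expect the cleanest proof imitates Lemma~\ref{lema:visitados-compacto} — for $c=0$ the clock is a sum of infinitely many independent terms and $\Prob(\Gamma^{\infty,A^c}(t)\le t)$ decays faster than any power of $t$; for $c>0$ one absorbs the $ct$ part and the same argument applies to the jump part. Finally, undoing the reduction, $\Prob(X^x(t)\in A)\le\Prob(X^\infty(t)\in A)=o(t)$, which is \eqref{eq:taxa-x-y}; the particular case $q_{xy}=0$ for $y\ne x$ is immediate by taking $A=\{y\}$.
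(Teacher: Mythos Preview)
Your reduction step is where the argument breaks. The claimed inequality
\[
  \Prob(X^x(t)\in A)\ \le\ \Prob(X^\infty(t)\in A)
\]
is false: shifting the $A$-occupation intervals of $X^\infty$ forward by $\gamma_x T_0$ does \emph{not} give a pointwise inclusion at a fixed time $t$. (A concrete counterexample: if $X^\infty$ occupies $A$ exactly on $[0.5,1)$ and $\gamma_x T_0=0.3$, then $X^x(1.1)\in A$ while $X^\infty(1.1)\notin A$.) Worse, the target you then set yourself, $\Prob(X^\infty(t)\in A)=o(t)$, is false too: by Proposition~\ref{prop:taxa-inf-x}, for $c>0$ one has $p_{\infty y}(t)/t\to\lambda_y/c>0$, hence $\Prob(X^\infty(t)\in A)/t\to\Lambda_A/c>0$; and for $c=0$ the ratio even diverges. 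So no amount of refining the $\Gamma^{\infty,A^c}$ analysis can rescue the approach once you have passed to $X^\infty$.

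The mechanism you are discarding is exactly what produces the $o(t)$. Starting at $x\in\Nz$ gives a genuine exponential delay $\gamma_x T_0$, so Remark~\ref{obs:reinicia-infinito} yields the identity
\[
  \frac{\Prob(X^x(t)\in A)}{t}
  \ =\ \frac{1}{t}\int_0^t \Prob\bigl(X^\infty(t-s)\in A\bigr)\,\frac{1}{\gamma_x}e^{-s/\gamma_x}\,ds
  \ \le\ \frac{1}{\gamma_x}\cdot\frac{1}{t}\int_0^t \Prob\bigl(\Gamma^\infty(\sigma_A-)\le s\bigr)\,ds,
\]
where $\sigma_A=\min_{y\in A}\sigma_1^y$ is exponential of rate $\Lambda_A<\infty$ (this is your $\tau_A$). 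You have already observed that $\Prob(\Gamma^\infty(\sigma_A-)\le s)\to 0$ as $s\searrow 0$, since $\Gamma^\infty(\sigma_A-)>0$ a.s.; the point is that a Ces\`aro average of a function tending to $0$ at the origin is itself $o(1)$. The extra factor of $t$ in the denominator is absorbed by the \emph{integral} coming from the initial holding time, not by any small-time behaviour of $X^\infty$.
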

\begin{proof}

  Let us denote by $\sigma_A = \inf_{y \in A} \sigma_1^y$; this is an
  exponential random variable of rate $\sum_{y\in A} \lambda_y$.

  Using Remark \ref{obs:reinicia-infinito}, we can write:
  \begin{align*}
    \frac{\Prob (X^x(t) \in  A)}{t}
    &= \frac{1}{t} \int_{0}^{t} \Prob( X^\infty(t-s) \in A ) \frac{1}{\gamma_x}
    e^{-\frac{s}{\gamma_x}} ds \\
    &\leq \frac{1}{t \gamma_x} \int_{0}^{t} \Prob( \Gamma^\infty(\sigma_A-) \leq t-s) ds \\
    &= \frac{1}{t \gamma_x} \int_{0}^{t} \Prob( \Gamma^\infty(\sigma_A-) \leq s) ds.
  \end{align*}

  Note that $\Prob( \Gamma^\infty(\sigma_A-) = 0) = 0$, so the
  probability inside the integral vanishes as $s \searrow 0$.
\end{proof}

\begin{proposicao}
  \label{prop:taxa-x-x}
  For every $x \in \Nz$:
  \begin{equation}
    \label{eq:taxa-x-x}
    q_{x x} = -\frac{1}{\gamma_x}.
  \end{equation}
\end{proposicao}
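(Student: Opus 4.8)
The plan is to compute the $t \searrow 0$ asymptotics of $p_{xx}(t) = \Prob(X^x(t) = x)$ for a fixed $x \in \Nz$ by decomposing on the first waiting time $\gamma_x T_0$, exactly as in the proof of Proposition~\ref{prop:taxa-x-y}. Using Remark~\ref{obs:reinicia-infinito}, the process sits at $x$ until time $\gamma_x T_0$, and then evolves as $X^\infty$; for $X^x(t)$ to equal $x$ one of two things must happen: either $t < \gamma_x T_0$ (the process has not yet left $x$), or $t \geq \gamma_x T_0$ and $X^\infty(t - \gamma_x T_0) = x$ (the process has left and returned). Accordingly I would write
\begin{displaymath}
  p_{xx}(t) = e^{-t/\gamma_x} + \int_0^t p_{\infty x}(t-s)\,\frac{1}{\gamma_x} e^{-s/\gamma_x}\, ds,
\end{displaymath}
which after the change of variables used in Proposition~\ref{prop:taxa-x-y} becomes $e^{-t/\gamma_x} + \frac{1}{\gamma_x}\int_0^t p_{\infty x}(s)\,e^{-(t-s)/\gamma_x}\,ds$.

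Next I would handle the two pieces. For the first, $\frac{e^{-t/\gamma_x} - 1}{t} \to -\frac{1}{\gamma_x}$ as $t \searrow 0$, which already gives the claimed value; so it remains to show the integral term contributes $o(t)$. The bound is $\frac{1}{t}\cdot\frac{1}{\gamma_x}\int_0^t p_{\infty x}(s)\,e^{-(t-s)/\gamma_x}\,ds \leq \frac{1}{t \gamma_x}\int_0^t p_{\infty x}(s)\,ds$, so it suffices to show $\frac{1}{t}\int_0^t p_{\infty x}(s)\,ds \to 0$, i.e.\ that $p_{\infty x}(s) \to 0$ as $s \searrow 0$. Intuitively this is clear: started from $\infty$, the process must spend a positive amount of $\Gamma$-time before it can land on the specific site $x$ (it needs a Poisson mark $\sigma_i^x$ with $\Gamma^\infty(\sigma_i^x-) \le s < \Gamma^\infty(\sigma_i^x)$), and $\Gamma^\infty(\sigma_1^x-)$ is a.s.\ strictly positive. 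Concretely, $p_{\infty x}(s) = \Prob(X^\infty(s) = x) \le \Prob\big(\Gamma^\infty(\sigma_1^x-) \le s\big)$, and since $\Gamma^\infty(\sigma_1^x-)$ is an absolutely continuous (hence a.s.\ positive) random variable by Remark~\ref{obs:Gamma-cadlag}, this probability tends to $0$ as $s \searrow 0$. By dominated convergence (or just monotonicity of the Cesàro average of a function tending to $0$), $\frac{1}{t}\int_0^t p_{\infty x}(s)\,ds \to 0$, and therefore
\begin{displaymath}
  q_{xx} = \lim_{t \searrow 0} \frac{p_{xx}(t) - 1}{t} = \lim_{t\searrow 0}\frac{e^{-t/\gamma_x}-1}{t} + 0 = -\frac{1}{\gamma_x}.
\end{displaymath}

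The main obstacle, such as it is, is the justification that $p_{\infty x}(s) \to 0$ as $s \searrow 0$; everything else is a routine first-order expansion. Note that this is genuinely easier than the companion facts about $p_{\infty\infty}$ handled in Propositions~\ref{prop:trans-c1} and~\ref{prop:trans-c0}, precisely because landing on a \emph{fixed} site $x$ requires consuming strictly positive $\Gamma$-time, whereas the set of times spent at $\infty$ can accumulate at $0$. One should be slightly careful that the identity for $p_{xx}(t)$ above is exact (the two events are disjoint and exhaust $\{X^x(t)=x\}$), which follows directly from the construction in Definition~\ref{def:processo} together with Remark~\ref{obs:reinicia-infinito}; and that the exponential density $\frac{1}{\gamma_x}e^{-s/\gamma_x}$ is indeed the density of $\gamma_x T_0$ since $T_0$ is rate-$1$ exponential. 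With those bookkeeping points in place the proof is complete.
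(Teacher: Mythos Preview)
Your proof is correct and follows essentially the same approach as the paper: decompose $p_{xx}(t)$ via Remark~\ref{obs:reinicia-infinito} into the survival term $e^{-t/\gamma_x}$ plus the return integral, then show the latter is $o(t)$ by bounding $p_{\infty x}(s)\le\Prob(\Gamma^\infty(\sigma_1^x-)\le s)\to 0$, exactly as in Proposition~\ref{prop:taxa-x-y}. One small quibble: Remark~\ref{obs:Gamma-cadlag} asserts absolute continuity of $\Gamma(t)$ for \emph{fixed} $t$, not for the random time $\sigma_1^x-$, but the conclusion you need (a.s.\ positivity of $\Gamma^\infty(\sigma_1^x-)$) is still correct and is precisely what the paper invokes.
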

\begin{proof}

  Using Remark \ref{obs:reinicia-infinito}, we can write:
  \begin{align*}
    \frac{p_{x x} (t) - 1}{t}
    &= \frac{\Prob( \gamma_x T_0^x > t)-1}{t} + 
    \frac{1}{t}\int_{0}^{t} \Prob( X^\infty(t-s) = x)
    \frac{1}{\gamma_x} e^{-\frac{s}{\gamma_x}} ds\\ 
    &= \frac{e^{-\frac{t}{\gamma_x}} - 1}{t} + 
    \frac{1}{t} \int_{0}^{t} p_{\infty x}(t-s) \frac{1}{\gamma_x}
    e^{-\frac{s}{\gamma_x}} ds\\ 
  \end{align*}

  The first term of the latter sum converges to $-\frac{1}{\gamma_x}$ as $t \searrow 0$.
  Using arguments analogous to the ones used in the proof of
  Proposition \ref{prop:taxa-x-y}, we can show that the second term of
  this sum vanishes as $t \searrow 0$.
\end{proof}

\begin{proposicao}
  \label{prop:taxa-x-inf}
  For $x \in \Nz$:
  \begin{displaymath}
    q_{x \infty} = \begin{cases}
      \frac{1}{\gamma_x} & \textrm{ if } c > 0 \\
      0 & \textrm{ if } c = 0 . \\
    \end{cases}
  \end{displaymath}
\end{proposicao}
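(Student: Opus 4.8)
The plan is to derive $q_{x\infty}$ from the already-established transition rates via the identity $\sum_{y\in\Nzb} q_{xy} = 0$ for $x \in \Nz$, or, more directly, to compute the limit $\lim_{t\searrow 0} p_{x\infty}(t)/t$ head-on using the renewal-type representation from Remark \ref{obs:reinicia-infinito}. The case $c=0$ is immediate: Proposition \ref{prop:trans-c0} gives $p_{x\infty}(t)=0$ for all $t>0$, so the difference quotient is identically zero and $q_{x\infty}=0$. So the substance is the case $c>0$, where I expect the answer $1/\gamma_x$.

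For $c>0$, first I would write, using Remark \ref{obs:reinicia-infinito},
\begin{displaymath}
  \frac{p_{x\infty}(t)}{t} = \frac{1}{t}\int_0^t p_{\infty\infty}(t-s)\,\frac{1}{\gamma_x}e^{-s/\gamma_x}\,ds
  = \frac{1}{t}\int_0^t p_{\infty\infty}(u)\,\frac{1}{\gamma_x}e^{-(t-u)/\gamma_x}\,du.
\end{displaymath}
By Proposition \ref{prop:trans-c1}, $p_{\infty\infty}(u)\to 1$ as $u\searrow 0$, and the kernel $\frac{1}{\gamma_x}e^{-(t-u)/\gamma_x}$ is bounded by $\frac{1}{\gamma_x}$ on $[0,t]$ and tends to $\frac{1}{\gamma_x}$ uniformly on $[0,t]$ as $t\searrow 0$. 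Hence the integrand is close to $\frac{1}{\gamma_x}$ for small $t$, and a standard $\epsilon$-$\delta$ estimate (split the integral at a point where $p_{\infty\infty}$ is within $\epsilon$ of $1$, bound the small remaining piece using boundedness of $p_{\infty\infty}\le 1$) gives $\frac{1}{t}\int_0^t p_{\infty\infty}(u)\frac{1}{\gamma_x}e^{-(t-u)/\gamma_x}\,du \to \frac{1}{\gamma_x}$. This yields $q_{x\infty}=1/\gamma_x$.

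Alternatively, and perhaps more cleanly, I would invoke the three rates already computed. We have $q_{xx}=-1/\gamma_x$ (Proposition \ref{prop:taxa-x-x}) and, for each $y\in\Nz\setminus\{x\}$, $q_{xy}=0$ (Proposition \ref{prop:taxa-x-y}); moreover Proposition \ref{prop:taxa-x-y} gives the stronger statement that $\Prob(X^x(t)\in A)/t \to 0$ for any $A\subseteq\Nz\setminus\{x\}$ with finite total weight. Writing $1 = p_{xx}(t) + \Prob(X^x(t)\in\Nz\setminus\{x\}) + p_{x\infty}(t)$, I want to conclude $\frac{1-p_{xx}(t)}{t} - \frac{\Prob(X^x(t)\in\Nz\setminus\{x\})}{t} \to \frac{1}{\gamma_x}$, i.e. that the middle term vanishes. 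The obstacle here is that $\Nz\setminus\{x\}$ need not have finite total weight (indeed $\sum_x\lambda_x=\infty$), so Proposition \ref{prop:taxa-x-y} does not apply directly to the whole set; one would need to control the contribution of the high-weight, small-$\gamma$ sites. Because of this gap, I would present the direct computation of the previous paragraph as the main argument: the key input is Proposition \ref{prop:trans-c1} ($p_{\infty\infty}(0+)=1$ when $c>0$), and the only real work is the elementary convergence of the averaged integral, which I regard as routine rather than as a genuine obstacle.
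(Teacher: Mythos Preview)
Your proposal is correct and follows essentially the same approach as the paper: the case $c=0$ is dispatched via Proposition~\ref{prop:trans-c0}, and for $c>0$ both you and the paper write $p_{x\infty}(t)/t$ as the convolution $\frac{1}{t}\int_0^t p_{\infty\infty}(t-s)\,\frac{1}{\gamma_x}e^{-s/\gamma_x}\,ds$ via Remark~\ref{obs:reinicia-infinito} and conclude using Proposition~\ref{prop:trans-c1}. Your discussion of the alternative ``sum of rates'' route, including the correct identification of its gap, is extra but accurate.
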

\begin{proof}
  The case $c = 0$ is trivial using Proposition
  \ref{prop:trans-c0}. So let us suppose $c > 0$.
  \begin{align*}
    \frac{p_{x \infty} (t)}{t}
    &= \frac{1}{t} \int_0^t \Prob(X^{\infty} (t-s) = \infty)
    \frac{1}{\gamma_x} e^{-\frac{s}{\gamma_x}} ds\\
    &= \frac{1}{t} \int_0^t p_{\infty \infty} (t-s)
    \frac{1}{\gamma_x}e^{-\frac{s}{\gamma_x}} ds\\
    &= \frac{e^{-t}}{\gamma_x} \frac{1}{t} \int_0^t p_{\infty \infty}
    (s) e^{\frac{s}{\gamma_x}} ds .
  \end{align*}

  Proposition \ref{prop:trans-c1} guarantees that 
  $p_{\infty \infty}(t) \to 1$ as $t \searrow 0$. So the above
  quantity converges to $\frac{1}{\gamma_x}$ as $t \searrow 0$.
\end{proof}

\begin{proposicao}
  \label{prop:taxa-inf-x}
  For $x \in \Nz$:
  \begin{displaymath}
    q_{\infty x} = \begin{cases}
      \frac{\lambda_x}{c} & \textrm{ if } c > 0 \\
      \infty & \textrm{ if } c = 0 \\
    \end{cases}
  \end{displaymath}
\end{proposicao}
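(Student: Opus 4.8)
The plan is to treat the two cases separately, reducing both to the truncated processes of Section~\ref{truncation} together with Proposition~\ref{prop:trans-c1}.

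\textit{Case $c>0$.} Since $\gamma_x>0$ we have $x\in\Nz_\delta$ for every $\delta\le\gamma_x$, so we may follow the truncated process $X_\delta$ near $x$; write $p^\delta_{yz}(t):=\Prob(X^{y}_\delta(t)=z)$. By Remark~\ref{obs:restric-truncados}, $X_\delta$ is a continuous-time Markov chain on $\Nzb_\delta$ with uniformly bounded rates --- the rate out of $w\in\Nz_\delta$ is $1/\gamma_w\le 1/\delta$ and the rate out of $\infty$ is $\Lambda_\delta/c$ with $\Lambda_\delta:=\sum_{w\in\Nz_\delta}\lambda_w<\infty$ (Remark~\ref{obs:trunc-lambda}) --- hence it satisfies Kolmogorov's forward equation and its semigroup is genuinely $e^{tQ^\delta}$. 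Reading off the rates from Remark~\ref{obs:restric-truncados} ($q^\delta_{\infty x}=\lambda_x/c$, $q^\delta_{xx}=-1/\gamma_x$, and $q^\delta_{wx}=0$ for $w\in\Nz_\delta\setminus\{x\}$, since from such $w$ the process jumps only to $\infty$), the forward equation for $p^\delta_{\infty x}(\cdot)$ collapses to the linear ODE $\frac{d}{dt}p^\delta_{\infty x}(t)=\frac{\lambda_x}{c}\,p^\delta_{\infty\infty}(t)-\frac1{\gamma_x}\,p^\delta_{\infty x}(t)$, $p^\delta_{\infty x}(0)=0$, whose solution is
\begin{displaymath}
  p^\delta_{\infty x}(t)=\frac{\lambda_x}{c}\int_0^t e^{-(t-s)/\gamma_x}\,p^\delta_{\infty\infty}(s)\,ds .
\end{displaymath}

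Now I would let $\delta\to0$. By Theorem~\ref{teo:convergencia-truncados} and Remark~\ref{obs:ponto-fixo-continuo}, for a fixed $t>0$ we have $X^\infty_\delta(t)\to X^\infty(t)$ a.s.\ in $(\Nzb,d)$; since the metric isolates the point $x$, this forces $\ind\{X^\infty_\delta(t)=x\}\to\ind\{X^\infty(t)=x\}$ a.s., and bounded convergence gives $p^\delta_{\infty x}(t)\to p_{\infty x}(t)$. For the $\infty$-state I would not invoke pointwise convergence of $p^\delta_{\infty\infty}$ but only the inequality
\begin{displaymath}
  \int_0^t p^\delta_{\infty\infty}(s)\,ds\ \ge\ \int_0^t p_{\infty\infty}(s)\,ds \qquad(\delta>0,\ t>0),
\end{displaymath}
which holds pathwise: by construction the real time a process with parameter $c$ spends at $\infty$ in $[0,t]$ equals $c$ times the value at $t$ of the generalized inverse of its clock, and $\Gamma_\delta\le\Gamma$ reverses the order of the inverses (taking expectations then gives the displayed bound). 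Combining, $p^\delta_{\infty x}(t)\ge\frac{\lambda_x}{c}e^{-t/\gamma_x}\int_0^t p^\delta_{\infty\infty}(s)\,ds\ge\frac{\lambda_x}{c}e^{-t/\gamma_x}\int_0^t p_{\infty\infty}(s)\,ds$, so in the limit $p_{\infty x}(t)\ge\frac{\lambda_x}{c}e^{-t/\gamma_x}\int_0^t p_{\infty\infty}(s)\,ds$; while $p^\delta_{\infty\infty}\le1$ yields $p^\delta_{\infty x}(t)\le\frac{\lambda_x\gamma_x}{c}(1-e^{-t/\gamma_x})$ and hence the same bound for $p_{\infty x}(t)$. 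Dividing by $t$ and using $p_{\infty\infty}(s)\to1$ as $s\searrow0$ (Proposition~\ref{prop:trans-c1}), both sides tend to $\lambda_x/c$, so $q_{\infty x}=\lambda_x/c$.

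\textit{Case $c=0$.} Here I would bound $p_{\infty x}(t)$ below by the probability that the \emph{first} visit to $x$ covers time $t$:
\begin{displaymath}
  p_{\infty x}(t)\ \ge\ \Prob\big(\Gamma^\infty(\sigma_1^x-)\le t<\Gamma^\infty(\sigma_1^x)\big)\ \ge\ e^{-t/\gamma_x}\,\Prob\big(\Gamma^\infty(\sigma_1^x-)\le t\big),
\end{displaymath}
the last step because $\Gamma^\infty(\sigma_1^x)=\Gamma^\infty(\sigma_1^x-)+\gamma_xT_1^x$ with $T_1^x$ an independent unit exponential. Fix an auxiliary $c_0>0$ and let $\widehat\Gamma$ be the clock built from the same $N_z$ and $T$'s but with parameter $c_0$; then $\Gamma^\infty(\sigma_1^x-)\le\widehat\Gamma^\infty(\sigma_1^x-)$, and writing $\widehat\Gamma^\infty(\sigma_1^x-)=\widehat\Gamma_{\setminus x}(\sigma_1^x)$ for the $c_0$-clock with the site $x$ deleted, which is independent of $\sigma_1^x\sim\mathrm{Exp}(\lambda_x)$, a conditioning argument gives $\Prob(\widehat\Gamma^\infty(\sigma_1^x-)\le t)=\E\big[1-e^{-\lambda_x\widehat\Xi_{\setminus x}(t)}\big]$, with $\widehat\Xi_{\setminus x}$ the inverse of $\widehat\Gamma_{\setminus x}$. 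Since $\widehat\Xi_{\setminus x}\ge\widehat\Xi$ (the inverse of the full $c_0$-clock), $\widehat\Xi(t)\le t/c_0$, and $\E[\widehat\Xi(t)]=\frac1{c_0}\int_0^t\widehat p_{\infty\infty}(s)\,ds=\frac{t}{c_0}(1+o(1))$ by Proposition~\ref{prop:trans-c1}, we get $\Prob(\Gamma^\infty(\sigma_1^x-)\le t)\ge\frac{\lambda_x}{c_0}\,t\,(1+o(1))$, hence $\liminf_{t\searrow0}p_{\infty x}(t)/t\ge\lambda_x/c_0$. As $c_0>0$ is arbitrary, $q_{\infty x}=\infty$.

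The step I expect to be the main obstacle is the $\delta\to0$ passage in the case $c>0$: one does \emph{not} have pointwise convergence $p^\delta_{\infty\infty}\to p_{\infty\infty}$ to invoke (the truncated process can sit at a state of small but positive $\gamma$ while $X^\infty$ is at $\infty$), and the argument is organized precisely so as to need only the integrated inequality $\int_0^t p^\delta_{\infty\infty}\ge\int_0^t p_{\infty\infty}$, which itself rests on the identity expressing the occupation time of $\infty$ through the inverse clock and on the monotonicity $\Gamma_\delta\le\Gamma$ in $\delta$. The remaining points --- that the truncated chain is uniform, so that the forward equation and its explicit solution are legitimate, and the elementary first-passage estimates above --- are routine.
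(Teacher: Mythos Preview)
Your argument is correct and takes a genuinely different route from the paper. The paper works directly with the construction: it isolates the first visit to $x$, computes the Laplace transform of $\Gamma^\infty(\sigma_1^x-)$ explicitly as
\[
\phi(\beta)=\lambda_x\Bigl(\lambda_x+\beta c+\beta\sum_{y\neq x}\tfrac{\lambda_y\gamma_y}{1+\beta\gamma_y}\Bigr)^{-1},
\]
and then invokes Karamata's Tauberian theorem to read off the behaviour of $\Prob(\Gamma^\infty(\sigma_1^x-)\le t)/t$ near $0$; the $c=0$ case follows by monotonicity of $\Gamma_c$ in $c$. You instead route everything through the truncated chains: the forward equation for $X_\delta$ gives a closed integral formula for $p^\delta_{\infty x}$, and a sandwich built from the trivial bound $p^\delta_{\infty\infty}\le 1$ together with the occupation-time inequality $\int_0^t p^\delta_{\infty\infty}\ge\int_0^t p_{\infty\infty}$ (via $\Gamma_\delta\le\Gamma$ and the identity $\theta(t)=c\,\Xi(t)$) survives the limit $\delta\to 0$ and squeezes $p_{\infty x}(t)/t$ to $\lambda_x/c$ using only Proposition~\ref{prop:trans-c1}. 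For $c=0$ you again use monotonicity to compare with a $c_0>0$ clock, but compute the first-visit probability through the inverse-clock identity $\Prob(\widehat\Gamma_{\setminus x}(\sigma_1^x)\le t)=\E[1-e^{-\lambda_x\widehat\Xi_{\setminus x}(t)}]$ and a Taylor bound (legitimate because $\widehat\Xi(t)\le t/c_0$). The trade-off: the paper's approach yields the exact Laplace transform of the hitting variable, a byproduct of independent interest, at the cost of importing the Tauberian theorem; your approach is more self-contained, using only the elementary dynamics of the bounded-rate truncated chain plus the soft monotonicity facts already available in Section~\ref{truncation}. One small point worth making explicit in your write-up is the passage $\E[1-e^{-\lambda_x\widehat\Xi(t)}]\ge\lambda_x(1-\lambda_x t/(2c_0))\E[\widehat\Xi(t)]$, which is what the uniform bound $\widehat\Xi(t)\le t/c_0$ actually delivers.
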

\begin{proof}
  \begin{align}
    \frac{p_{\infty x}(t)}{t} &=
    \frac{1}{t} \Prob \left( \bigcup_{i =1}^{\infty} \left\{
        \Gamma^\infty (\sigma^x_i -) \leq t <
        \Gamma^\infty(\sigma^x_i) \right\}
    \right) \notag \\
    &= \frac{\Prob \left( \Gamma^\infty (\sigma^x_1 -) \leq t <
      \Gamma^\infty(\sigma^x_1) \right)}{t}
  + \frac{1}{t} \Prob \left( \bigcup_{i =
        2}^{\infty} \left\{ \Gamma^\infty (\sigma^x_i -) \leq t <
        \Gamma^\infty(\sigma^x_i) \right\} \right) \notag \notag \\
    \label{erros_taxa_inf}
    &= \frac{\Prob \left( \Gamma^\infty (\sigma^x_1 -) \leq t \right)}{t} -
    \frac{\Prob \left( \Gamma^\infty (\sigma^x_1) \leq t \right)}{t} +
    \frac{1}{t} \Prob \left( \bigcup_{i =
        2}^{\infty} \left\{ \Gamma^\infty (\sigma^x_i -) \leq t <
        \Gamma^\infty(\sigma^x_i) \right\} \right)
  \end{align}

  Working with the second term of this sum, we obtain:
  \begin{align*}
    \frac{\Prob (\Gamma^\infty (\sigma^x_1) \leq t)}{t}
    &= \frac{1}{t} \Prob \left(
      \gamma_x T^x_1 + 
      \sum_{y \neq x} \sum_{i = 1}^{N_y (\sigma^x_1)} \gamma_y T^y_i +
      c\sigma^x_1
      \leq t
    \right) \\
    &\leq \frac{1}{t} \Prob \left(
      \gamma_x T^x_1 + 
      \sum_{y \neq x} \sum_{i = 1}^{N_y (\sigma^x_1)} \gamma_y T^y_i
      \leq t
    \right)\\
    &= \frac{1}{t} \int_0^t \Prob \left(
      \sum_{y \neq x} \sum_{i = 1}^{N_y (\sigma^x_1)} \gamma_y T^y_i
      \leq t - s
    \right) \frac{1}{\gamma_x} e^{-\frac{s}{\gamma_x}} ds\\
    &\leq \frac{1}{\gamma_x t} \int_0^t \Prob \left(
      \sum_{y \neq x} \sum_{i = 1}^{N_y (\sigma^x_1)} \gamma_y T^y_i
      \leq t - s
    \right) ds\\
    &= \frac{1}{\gamma_x t} \int_0^t \Prob \left(
      \sum_{y \neq x} \sum_{i = 1}^{N_y (\sigma^x_1)} \gamma_y T^y_i
      \leq s
    \right) ds\\
  \end{align*}

  The probability inside the last integral vanishes as $s \to 0$,
  from which we can conclude that the second term of
  \eqref{erros_taxa_inf} vanishes as $t \searrow 0$.

  Note that the event in the third term of \eqref{erros_taxa_inf}
  is contained in the event of the second term, so the third term also
  vanishes as $t \searrow 0$.

  Now we are left with computing the limit of the first term. We are going
  to do this using Karamata's Tauberian Theorem, that relates the
  behavior of the distribution function of a positive random variable
  near the origin with the behavior of its Laplace transform near
  infinity. We will follow the statement of this theorem given in
  \cite{fellerv2} ({Theorem XIII.5.1}).

  Conditioning in the value of $\sigma_1^x$ and using the independence
  of the Poisson processes, we can compute the Laplace transform
  $\phi$ of $\Gamma^\infty(\sigma^x_1-)$ by:
  \begin{displaymath}
    \phi (\beta) := \E \left[ e^{-\beta \Gamma^\infty (\sigma^x_1-)}  \right] 
    = \lambda_x \left( \lambda_x + \beta c + \beta \sum_{y \neq x}
      \frac{\lambda_y \gamma_y}{1 + \beta\gamma_y} \right)^{-1}.
  \end{displaymath}
  
  Let us first treat the case $c > 0$. Note that in this case:
  \begin{align*}
      \lim_{\alpha \to \infty} \frac{\phi(\alpha \beta)}{\phi
        (\alpha)} &= \frac{1}{\beta}.
  \end{align*}

  So the condition of the Tauberian theorem is verified, from which we
  obtain that, when $c > 0$:
  \begin{align*}
    \lim_{t \searrow 0} \frac{P( \Gamma^\infty(\sigma^x_1-) \leq
      t)}{\phi(\frac{1}{t})} = 1
  \end{align*}

  Note that $\frac{\phi(\frac{1}{t})}{t} \to \frac{\lambda_x}{c}$ as
  $t \searrow 0$, from which we conclude the case $c > 0$.

  To treat the case $c = 0$, note that our construction allows for a
  coupling of several K processes, with different values of $c$, using
  the same Poisson processes and exponential random
  variables. So let us attach an index $c$ in $\Gamma^y_c$ to denote
  what is the value of $c$ to which we are referring.

  Note that $\Gamma^y_c$ is increasing in $c$, so $\Prob (
  \Gamma^\infty_c(\sigma^x_1-) \leq t)$ is monotonic in $c$.
  Finally we can conclude that, for every $c > 0$:
  \begin{align*}
    \liminf_{t \searrow 0} \frac{\Prob ( \Gamma^\infty_0(\sigma^x_1-) \leq
      t)}{t} &\geq \liminf_{t \searrow 0} \frac{\Prob (
      \Gamma^\infty_c(\sigma^x_1-) \leq t)}{t}
    = \frac{\lambda_x}{c}.
  \end{align*}

  Taking small values of $c > 0$, we conclude that $\frac{\Prob (
    \Gamma^\infty_0(\sigma^x_1-))}{t} \xrightarrow{t \searrow 0}
  \infty$.
\end{proof}

\begin{proposicao}
  \label{prop:taxa-inf-inf}
  \begin{displaymath}
    q_{\infty \infty} = -\infty.
  \end{displaymath}
\end{proposicao}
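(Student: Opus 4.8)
The plan is to show that $\frac{p_{\infty\infty}(t) - 1}{t} \to -\infty$ as $t \searrow 0$, or equivalently, that $\frac{1 - p_{\infty\infty}(t)}{t} \to +\infty$. The natural route is to bound $1 - p_{\infty\infty}(t)$ from below by the probability of entering a fixed state, and use the transition rates $q_{\infty x}$ computed in Proposition~\ref{prop:taxa-inf-x}. Concretely, for any $x \in \Nz$ and any $t > 0$ we have $1 - p_{\infty\infty}(t) = \Prob(X^\infty(t) \neq \infty) \geq \Prob(X^\infty(t) = x) = p_{\infty x}(t)$. Therefore

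\begin{displaymath}
  \frac{1 - p_{\infty\infty}(t)}{t} \geq \frac{p_{\infty x}(t)}{t},
\end{displaymath}

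and taking $\liminf_{t\searrow0}$ gives, by Proposition~\ref{prop:taxa-inf-x},

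\begin{displaymath}
  \liminf_{t \searrow 0} \frac{1 - p_{\infty\infty}(t)}{t} \geq q_{\infty x}.
\end{displaymath}

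Here there are two cases. If $c = 0$, Proposition~\ref{prop:taxa-inf-x} already gives $q_{\infty x} = \infty$ for any single $x$, so we are done immediately (and in fact $q_{\infty\infty} = -\infty$ follows at once, noting also $p_{\infty\infty}(t)\to1$ fails — indeed $p_{\infty\infty}(t)=0$ by Proposition~\ref{prop:trans-c0}, so the difference quotient is $-1/t\to-\infty$, an even simpler argument). If $c > 0$, then $q_{\infty x} = \lambda_x/c$, which is finite for each individual $x$; but one can improve the bound by summing over a finite set. For a finite set $F \subset \Nz$, the events $\{X^\infty(t) = x\}$, $x \in F$, are disjoint, so $1 - p_{\infty\infty}(t) \geq \sum_{x \in F} p_{\infty x}(t)$, and hence

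\begin{displaymath}
  \liminf_{t \searrow 0} \frac{1 - p_{\infty\infty}(t)}{t} \geq \sum_{x \in F} q_{\infty x} = \frac{1}{c}\sum_{x \in F} \lambda_x.
\end{displaymath}

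Since $\sum_{x \in \Nz} \lambda_x = \infty$ by \eqref{eq:soma-lambda-gamma}, choosing $F$ with $\sum_{x\in F}\lambda_x$ arbitrarily large shows the $\liminf$ is $+\infty$, i.e. $q_{\infty\infty} = -\infty$.

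The only mild subtlety — and the step I would be most careful about — is the interchange of the finite sum with the $\liminf$: one needs $\liminf_{t}\sum_{x\in F} \frac{p_{\infty x}(t)}{t} \geq \sum_{x\in F}\liminf_t \frac{p_{\infty x}(t)}{t}$, which is just superadditivity of $\liminf$ over a \emph{finite} sum and requires no uniformity. (In the $c>0$ case each term's limit exists by Proposition~\ref{prop:taxa-inf-x}, so the sum's limit is literally $\sum_{x\in F} q_{\infty x}$ and no $\liminf$ gymnastics are needed at all.) One should also note that $q_{\infty\infty}$ is by definition $\lim_{t\searrow0}\frac{p_{\infty\infty}(t)-1}{t}$, and the displayed bounds show this limit, if it exists, equals $-\infty$; it is worth a half-sentence to remark that since the difference quotient is monotone enough — or simply since we have shown $\frac{1-p_{\infty\infty}(t)}{t}\to\infty$ — the statement $q_{\infty\infty}=-\infty$ is unambiguous. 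No serious obstacle is expected; this proposition is essentially a corollary of Propositions~\ref{prop:trans-c0} and~\ref{prop:taxa-inf-x} together with the divergence of $\sum\lambda_x$.
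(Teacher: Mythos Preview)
Your proposal is correct and follows essentially the same approach as the paper: handle $c=0$ via Proposition~\ref{prop:trans-c0} (so $p_{\infty\infty}(t)=0$ and the quotient is $-1/t$), and for $c>0$ bound $1-p_{\infty\infty}(t)$ below by a finite sum $\sum_{x\in F}p_{\infty x}(t)$, apply Proposition~\ref{prop:taxa-inf-x} termwise, and let $\sum_{x\in F}\lambda_x\to\infty$ via~\eqref{eq:soma-lambda-gamma}. The paper's write-up is slightly terser (it simply writes the equality $\frac{p_{\infty\infty}(t)-1}{t}=-\sum_x\frac{p_{\infty x}(t)}{t}$ and truncates), but the argument is the same.
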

\begin{proof}
  The case $c = 0$ is trivial using Proposition
  \ref{prop:trans-c0}. So let us suppose $c > 0$. 

  Taking $\{x_1, x_2, \ldots\}$ an enumeration of $\Nz$, we can write:
  \begin{equation}
    \label{eq:taxa-inf-inf}
    \frac{p_{\infty \infty} (t) - 1}{t} 
    = - \sum_{x \in \Nz} \frac{p_{\infty x} (t)}{t}
    \leq - \sum_{i = 1}^n \frac{p_{\infty x_i} (t)}{t}.
  \end{equation}
  
  The last term converges to $-\sum_{i = 1}^n \frac{\lambda_{x_i}}{c}$ as
  $t \searrow 0$ by Proposition \ref{prop:taxa-inf-x}, and this 
  goes to $-\infty$ as $n \to \infty$ because of the first condition
  in~\eqref{eq:soma-lambda-gamma}.
\end{proof}

\begin{observacao}
  \label{obs:reuter}
  The above results on the transition rates of the K process, adjoined to the Markov property,
and the uniqueness result of~\cite{reuter:69}, imply that in the $c>0$ case, the K process is
a version of the process introduced in the latter reference. In the notation of that reference,
we have the following correspondence with the notation of the present paper:
$$a_x=1/\gamma_x,\quad b_x=\lambda_x/c.$$
\end{observacao}

\begin{teorema}
  \label{teo:dist-invariante}
  The K process has a unique invariant probability measure, given by:
  \begin{equation}
    \label{eq:dist-invariante}
    \pi(x) = \begin{cases}
      \frac{\lambda_x \gamma_x}{c + \sum_{y \in \Nz} \lambda_y
        \gamma_y} & \text{if } x \in \Nz\\
      \frac{c}{c + \sum_{y \in \Nz} \lambda_y
        \gamma_y} & \text{if } x = \infty
    \end{cases}
  \end{equation}
\end{teorema}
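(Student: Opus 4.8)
The plan is to prove existence and uniqueness together by an occupation‑time law of large numbers; the key observation is that the clock process $\Gamma:=\Gamma^\infty$ of Definition~\ref{def:Gamma} is a subordinator of finite mean. Indeed, by Remark~\ref{obs:Gamma-cadlag} it is nondecreasing, càdlàg, with stationary independent increments, and $\E[\Gamma(S)]=S\bigl(c+\sum_{x\in\Nz}\lambda_x\gamma_x\bigr)=:Sm$, where $m\in(0,\infty)$ precisely by~\eqref{eq:soma-lambda-gamma}; hence $\Gamma(S)/S\to m$ a.s.\ as $S\to\infty$ (squeeze $\Gamma(\lfloor S\rfloor)\le\Gamma(S)\le\Gamma(\lceil S\rceil)$ and apply the ordinary SLLN to the i.i.d.\ increments $\Gamma(k)-\Gamma(k-1)$). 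From Definition~\ref{def:processo}, for $y\in\Nz$ the total Lebesgue measure of $\{s\ge 0:X^\infty(s)=y\}$ contained in $[0,\Gamma(S))$ is exactly $\sum_{i=1}^{N_y(S)}\gamma_y T^y_i$ (the lengths of the excursion intervals $[\Gamma(\sigma^y_i-),\Gamma(\sigma^y_i))$), while the time spent at $\infty$ in $[0,\Gamma(S))$ equals $cS$, these contributions summing to $\Gamma(S)$. By the SLLN for the Poisson process $N_y$ and for the i.i.d.\ family $\{\gamma_y T^y_i\}_{i\ge1}$, $\frac1S\sum_{i=1}^{N_y(S)}\gamma_y T^y_i\to\lambda_y\gamma_y$ a.s.; dividing, the fraction of real time spent at $y$ up to the random time $\Gamma(S)$ tends a.s.\ to $\lambda_y\gamma_y/m=\pi(y)$, and the fraction at $\infty$ (which equals $cS/\Gamma(S)$) to $c/m=\pi(\infty)$.

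Next I would pass to deterministic times. Since $\Gamma$ grows linearly, $\Gamma(S)/\Gamma(S-)\to1$, and a monotonicity squeeze based on $\Gamma(S(t)-)\le t\le\Gamma(S(t))$, with $S(t)\uparrow\infty$, upgrades the previous limits to
\[
\frac1t\,\mathrm{Leb}\{s\le t:X^\infty(s)=z\}\xrightarrow{t\to\infty}\pi(z)\quad\text{a.s., for every }z\in\Nzb .
\]
Taking expectations (Fubini and bounded convergence), $\nu_t(z):=\frac1t\int_0^t p_{\infty z}(s)\,ds\to\pi(z)$ for each $z\in\Nzb$. A direct computation gives $\sum_{z\in\Nzb}\pi(z)=1$ (again using $\sum_x\lambda_x\gamma_x<\infty$); since each $\nu_t$ is a probability measure on the countable set $\Nzb$ converging pointwise to the probability measure $\pi$, Scheffé's lemma yields $\nu_t\to\pi$ in total variation. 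In particular no occupation mass escapes to infinity — automatic here because the candidate $\pi$ already has total mass~$1$.

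For existence I would invoke the Markov property (Theorem~\ref{teo:processo-markov}), which gives Chapman--Kolmogorov and hence, for $h>0$,
\[
(\nu_t\Psi_h)(z)=\frac1t\int_0^t p_{\infty z}(s+h)\,ds=\nu_t(z)+\frac1t\int_t^{t+h}p_{\infty z}(s)\,ds-\frac1t\int_0^h p_{\infty z}(s)\,ds,
\]
whose last two terms are bounded by $h/t\to0$; thus $(\nu_t\Psi_h)(z)-\nu_t(z)\to0$. On the other hand $|(\nu_t\Psi_h)(z)-(\pi\Psi_h)(z)|\le\Vert\nu_t-\pi\Vert_{\mathrm{TV}}\to0$, so letting $t\to\infty$ gives $\pi\Psi_h=\pi$ for all $h\ge0$; that is, $\pi$ is invariant. (Existence could alternatively be obtained from the truncation scheme: the chains $X_\delta$ of Remark~\ref{obs:restric-truncados} are reversible with explicit stationary laws $\pi_\delta$ which converge to $\pi$ in total variation as $\delta\to0$, combined with the a.s.\ Skorohod convergence of Corollary~\ref{cor:convergencia-truncados} and dominated convergence.)

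For uniqueness, let $\mu$ be an invariant probability measure. For $y\in\Nz$ the indicator $\ind_{\{y\}}$ is continuous (it lies in the class $\CC$ of~\eqref{eq:classe-C}), so invariance gives $\mu(y)=\int p_{xy}(s)\,\mu(dx)$ for every $s>0$. Averaging over $s\in[0,t]$ and letting $t\to\infty$: by Remark~\ref{obs:reinicia-infinito} one has $X^x(\cdot)=X^\infty(\cdot-\gamma_xT_0)$ after the finite shift $\gamma_xT_0$, so the occupation limit above holds with $X^\infty$ replaced by $X^x$ for every $x\in\Nz$ as well (the shift being $o(t)$), whence $\frac1t\int_0^t p_{xy}(s)\,ds\to\pi(y)$ for all $x\in\Nzb$; dominated convergence (integrands bounded by $1$, $\mu$ finite) then gives $\mu(y)=\int\pi(y)\,\mu(dx)=\pi(y)$. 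Finally $\mu(\infty)=1-\sum_{y\in\Nz}\mu(y)=1-\sum_{y\in\Nz}\pi(y)=\pi(\infty)$, so $\mu=\pi$. The main obstacle in this scheme is the very first step — interchanging the infinite sum over sites with the long‑time limit; the hypothesis $\sum_x\lambda_x\gamma_x<\infty$ is exactly what resolves it, as it makes $\Gamma$ a subordinator of finite mean so the SLLN applies to $\Gamma$ as a whole rather than term by term. The remaining steps (the jump‑negligibility squeeze, Scheffé, dominated convergence) are routine once that is in place.
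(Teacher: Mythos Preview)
Your argument is correct and takes a genuinely different route from the paper's. The paper proceeds analytically: it first argues existence/uniqueness abstractly via the discrete skeleton $Y^h_n=X(nh)$ (irreducibility/positive recurrence), and then \emph{identifies} $\pi$ by passing to the limit $t\searrow 0$ in $\mu(y)/t=\sum_x\mu(x)\,p_{xy}(t)/t$, invoking the transition rates $q_{\infty y}=\lambda_y/c$ and $q_{yy}=-1/\gamma_x$ from Propositions~\ref{prop:taxa-x-x} and~\ref{prop:taxa-inf-x}; the case $c=0$ is handled separately by a continuity-in-$c$ coupling. Your approach is probabilistic/ergodic: you read the occupation times directly off the Poisson construction, use the SLLN on the finite-mean subordinator $\Gamma$ to get the a.s.\ occupation limits, and then promote these to Ces\`aro limits of $p_{\infty z}$ and to invariance via a Krylov--Bogolyubov style averaging, with uniqueness following from the same pathwise limit started at every $x$. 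This buys you (i) a unified treatment of $c>0$ and $c=0$, (ii) independence from the Tauberian computation in Proposition~\ref{prop:taxa-inf-x}, and (iii) a direct link between the formula for $\pi$ and the construction of the process. What the paper's route buys is that, once the $q$-matrix has been computed anyway, the identification of $\pi$ is a two-line balance equation. Two cosmetic remarks: the continuity of $\ind_{\{y\}}$ is irrelevant to your uniqueness step (invariance $\mu\Psi_h=\mu$ is already a statement about measures on the countable set $\Nzb$), and the squeeze you describe is cleanest stated as $\Gamma(\Xi(t)-1)\le t\le\Gamma(\Xi(t))$ together with $\Gamma(S)/S\to m$, which immediately yields $\Gamma(\Xi(t))/t\to1$ and $\Xi(t)/t\to 1/m$; your phrase ``$\Gamma(S)/\Gamma(S-)\to1$'' is then a consequence rather than an input.
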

\begin{proof} (Sketchy)

  Let us first prove the existence and uniqueness of the invariant
  measure, using a standard argument. For a fixed $h > 0$ consider 
  the discrete time Markov Chain $(Y^h_n)_{n \in \N}$ given by:
  \begin{displaymath}
    Y^h_n := X(n h).
  \end{displaymath}

  Note that this discrete time Markov Chain is irreducible in the case
  $c > 0$ and that there is only one closed class of communication, namely
  $\Nz$, in the case $c = 0$.  Furthermore, in both cases, every
  state $x \in \Nz$ is positive recurrent.

  This implies that, for every choice of $h > 0$, there exists a unique
  invariant probability measure $\mu_h$ for $(Y^h_n)_n$. The Markov
  property yields that this probability measure is the same for every
  rational choice of $h$.

  Finally using the continuity of the transition functions
  (Proposition \ref{prop:trans-continuas}) we conclude that $\mu_h$
  is the same for every choice of $h$. It follows that this is the unique
  invariant probability measure of the K process.

  Let us thus drop the sub-index $h$ and refer to this probability
  measure simply as $\mu$.  For computing it in the case $c > 0$, note
  that it needs to satisfy:
  \begin{displaymath}
    \frac{\mu(y)}{t} = \sum_{x \in \Nzb} \mu(x) \frac{p_{x y}(t)}{t}
  \end{displaymath}
  for every $t>0$. Taking the limit $t \searrow 0$, we conclude from
  the above results about the transition rates of the process that for
  $y\in \Nz$:
  \begin{displaymath}
    \mu(\infty) q_{\infty y} + \mu(y) q_{y y} = 0.
  \end{displaymath}

  Using Propositions \ref{prop:taxa-x-x} and \ref{prop:taxa-inf-x}, we
  conclude that the only solution for this system of equations that is
  a probability measure is $\pi$ given in \eqref{eq:dist-invariante}.

  Since $p_{x y} (t)$ varies continually as a function of $c$ for every fixed 
  $x, y \in \Nz$ and $t > 0$, as can be shown via a straightforward coupling 
  argument, we may obtain the case $c = 0$ by taking the limit $c\to
  0$.
\end{proof}

\section{Infinitesimal Generator}\label{ig}

As pointed out in the introduction, the case of positive $c$ was treated in~\cite{reuter:69}.
We will restrict our attention in this section to the case where $c = 0$.

We will denote by $\CC_0$ the set of bounded continuous functions. We will show below that
the following subset of $\CC_0$ is a core for the generator of the K process.
\begin{equation}
  \label{eq:classe-D}
  \DD = \left\{
    f \in \CC_0 :
    \begin{array}{l}
      \sum_{x \in \Nzb} |f(x)-f(\infty)|\lambda_x < \infty,\,\,%\\
      \sum_{x \in \Nzb} (f(x)-f(\infty))\lambda_x = 0,\\
      %\displaystyle
     \mbox{}\hspace{2cm}\lim_{x \to \infty} \frac{f(x) - f(\infty)}{\gamma_{x}}
      \text{ exists } %\lim_{n \to \infty}\gamma_{x_n} = 0
    \end{array}
  \right\}
\end{equation}
Let us recall that in $(\Nzb,d)$, $x\to\infty$ is equivalent to $\gamma_x\to0$.

\begin{teorema}
  \label{teo:gerador-x}
  For every $x \in \Nz$, and $h \in \DD$ we have that:
  \begin{equation}
    \label{eq:gerador-x}
    \lim_{t \searrow 0} \frac{\E \left[ h(X^x(t))\right] - h(x)}{t} =
    \frac{h(\infty) - h(x)}{\gamma_x}.
  \end{equation}
\end{teorema}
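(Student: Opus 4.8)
The plan is to use the construction directly. Fix $x\in\Nz$ and $h\in\DD$. By Remark \ref{obs:reinicia-infinito}, the process started at $x$ waits an exponential time $\gamma_x T_0$ (with $T_0$ exponential of rate $1$, so $\gamma_x T_0$ has mean $\gamma_x$) and then behaves as $X^\infty$. Splitting on whether this waiting time has elapsed by time $t$, I write
\begin{align*}
  \E[h(X^x(t))] - h(x)
  &= \E\big[(h(X^x(t))-h(x))\ind\{\gamma_x T_0 > t\}\big]
   + \E\big[(h(X^x(t))-h(x))\ind\{\gamma_x T_0 \leq t\}\big].
\end{align*}
On $\{\gamma_x T_0 > t\}$ we have $X^x(t)=x$, so the first term vanishes identically. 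For the second term, on $\{\gamma_x T_0 \leq t\}$ we have $X^x(t) = X^\infty(t-\gamma_x T_0)$, so after conditioning on $\gamma_x T_0 = s$ the second term equals
\[
  \frac{1}{\gamma_x}\int_0^t \E\big[h(X^\infty(t-s))-h(x)\big]\,e^{-s/\gamma_x}\,ds.
\]
Dividing by $t$, the goal is to show this converges to $(h(\infty)-h(x))/\gamma_x$ as $t\searrow 0$.

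The key point is that $\E[h(X^\infty(t-s))] \to h(\infty)$ uniformly enough as $t,s\to 0$. First I would observe that $\Prob(\gamma_x T_0 \leq t)/t \to 1/\gamma_x$, so it suffices to show $\E[h(X^\infty(r))] \to h(\infty)$ as $r\searrow 0$ and then use a uniform-integrability / dominated-convergence style argument (the integrand $e^{-s/\gamma_x}$ is bounded, $h$ is bounded, and the average of $\E[h(X^\infty(t-s))]$ over $s\in[0,t]$ with the exponential weight tends to the value at $0^+$). So the crux reduces to proving
\[
  \lim_{r \searrow 0} \E[h(X^\infty(r))] = h(\infty), \qquad h\in\DD.
\]
Write $\E[h(X^\infty(r))] - h(\infty) = \sum_{y\in\Nz}(h(y)-h(\infty))\,p_{\infty y}(r)$, using that $p_{\infty\infty}(r)=0$ for $c=0$ by Proposition \ref{prop:trans-c0}, and that $\sum_{y\in\Nzb}(h(y)-h(\infty))\lambda_y = 0$ for $h\in\DD$.

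The main obstacle is controlling this sum as $r\searrow 0$: the individual rates $q_{\infty y}=\lambda_y/c$ blow up as $c\to 0$ (here $c=0$ makes $\infty$ instantaneous), so one cannot simply interchange limit and sum term by term. The strategy I expect to work is to exploit the two defining properties of $\DD$ beyond boundedness: the summability $\sum_y |h(y)-h(\infty)|\lambda_y<\infty$ and the limit $\lim_{y\to\infty}(h(y)-h(\infty))/\gamma_y =: L$. Writing $h(y)-h(\infty) = (h(y)-h(\infty) - L\gamma_y) + L\gamma_y$, the first piece is $o(\gamma_y)$ as $y\to\infty$ and the remainder $\sum_y L\gamma_y p_{\infty y}(r)$ is related to the time the clock process spends: indeed by the construction $\sum_{y}\gamma_y p_{\infty y}(r) = \E[\Gamma'\text{-increment up to } X^\infty\text{-scale}]$-type quantity, which one can show is $o(1)$ as $r\searrow 0$ via the estimate from the proof of Proposition \ref{prop:trans-c0} (the tail $\sum_{y\notin\Nz_\delta}\gamma_y T_i^y$ argument). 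For the $o(\gamma_y)$ piece plus the zero-sum condition, I would split $\Nz$ into $\Nz_\delta$ (finitely many states, where $p_{\infty y}(r)\to 0$ as $r\searrow 0$ by Remark \ref{obs:trans-facil} combined with the Markov property, since to reach $y\in\Nz$ from $\infty$ in time $r$ has probability $o(1)$) and its complement (where $|h(y)-h(\infty)|\leq \epsilon\gamma_y$ for $\delta$ small, and then $\sum_{y\notin\Nz_\delta}\gamma_y p_{\infty y}(r)$ is handled as above, or alternatively bounded using $\sum_{y\notin\Nz_\delta}|h(y)-h(\infty)|\lambda_y$ being a small tail together with a crude bound on $p_{\infty y}(r)/\lambda_y$). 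Assembling: the $\Nz_\delta$ part $\to 0$ for each fixed $\delta$, the tail part is $\leq \epsilon \cdot(\text{bounded})$ uniformly in small $r$, so $\limsup_{r\searrow 0}|\E[h(X^\infty(r))]-h(\infty)| \leq C\epsilon$ for every $\epsilon>0$, giving the claim. Feeding this back into the displayed integral identity and using $\Prob(\gamma_x T_0\leq t)/t\to 1/\gamma_x$ completes the proof of \eqref{eq:gerador-x}.
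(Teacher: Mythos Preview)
Your reduction via Remark~\ref{obs:reinicia-infinito} is correct: conditioning on $\gamma_x T_0$, one obtains
\[
\frac{\E[h(X^x(t))]-h(x)}{t}=\frac{1}{\gamma_x t}\int_0^t\bigl(\E[h(X^\infty(t-s))]-h(x)\bigr)e^{-s/\gamma_x}\,ds,
\]
and it indeed suffices to show $\E[h(X^\infty(r))]\to h(\infty)$ as $r\searrow0$, since then the integrand converges uniformly on $[0,t]$ to $h(\infty)-h(x)$ and $\frac{1}{t}\int_0^t e^{-s/\gamma_x}\,ds\to 1$.

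However, you have made the second half much harder than it needs to be. The claim $\E[h(X^\infty(r))]\to h(\infty)$ requires only that $h\in\CC_0$, not the special summability, zero-sum, or limit conditions defining $\DD$. Indeed, $X^\infty$ is c\`adl\`ag (Proposition~\ref{prop:processo-cadlag}) with $X^\infty(0)=\infty$ a.s., so $X^\infty(r)\to\infty$ a.s.\ as $r\searrow0$; continuity of $h$ then gives $h(X^\infty(r))\to h(\infty)$ a.s., and dominated convergence (boundedness of $h$) finishes. Your long detour through the $\DD$-conditions is unnecessary, and contains a slip: you assert that $\Nz_\delta$ has ``finitely many states'', but this need not hold --- only $\sum_{y\in\Nz_\delta}\lambda_y<\infty$ is guaranteed (Remark~\ref{obs:trunc-lambda}).

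The paper's proof is organized differently and is shorter: it writes
\[
\frac{\E[h(X^x(t))]-h(x)}{t}=\sum_{y\notin\Nz_\delta}(h(y)-h(x))\frac{p_{xy}(t)}{t}+\sum_{y\in\Nz_\delta\setminus\{x\}}(h(y)-h(x))\frac{p_{xy}(t)}{t},
\]
chooses $\delta$ so that $|h(y)-h(\infty)|<\epsilon$ off $\Nz_\delta$ (using only $h\in\CC_0$), kills the second sum via Proposition~\ref{prop:taxa-x-y} (since $\sum_{y\in\Nz_\delta}\lambda_y<\infty$), and bounds the first sum between $(h(\infty)-h(x)\pm\epsilon)\cdot\frac{1-p_{xx}(t)-\Prob(X^x(t)\in\Nz_\delta\setminus\{x\})}{t}$, which tends to $(h(\infty)-h(x)\pm\epsilon)/\gamma_x$ by Propositions~\ref{prop:taxa-x-x} and~\ref{prop:taxa-x-y}. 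Both routes ultimately exploit only continuity and boundedness of $h$; the extra $\DD$-structure is needed for Theorem~\ref{teo:gerador-inf}, not here.
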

\begin{proof}
  We will show that, for any fixed $\epsilon > 0$:
  \begin{gather}
    \label{eq:gerador-x-sup}
    \limsup_{t \searrow 0} \frac{\E \left[ h(X^x(t))\right] - h(x)}{t} \leq
    \frac{h(\infty) - h(x) + \epsilon}{\gamma_x}\\
    \label{eq:gerador-x-inf}
    \liminf_{t \searrow 0} \frac{\E \left[ h(X^x(t))\right] - h(x)}{t} \geq
    \frac{h(\infty) - h(x) - \epsilon}{\gamma_x},
  \end{gather}
  from these inequalities, \eqref{eq:gerador-x} follows
  immediately. We will only show \eqref{eq:gerador-x-sup}, since
  \eqref{eq:gerador-x-inf} is analogous.

  Since $h \in \DD \subset \CC_0$, we can take $\delta \in (0,
  \gamma_x)$ and $H > 0$ such that $|h(y)-h(\infty)| < \epsilon$
  whenever $\gamma_y < \delta$ and $\sup_{y\in \Nzb} |h(y)| < H$.
  
  \begin{align}
    \label{eq:gerador-x-1}
    \frac{\E \left[ h(X^x(t))\right] - h(x)}{t} &=
    \sum_{y \not\in \Nz_\delta} (h(y) - h(x)) \frac{p_{x
        y}(t)}{t} +
    \sum_{y \in \Nz_\delta\setminus\{x\}} (h(y) - h(x)) \frac{p_{x
        y}(t)}{t}
  \end{align}

  The second term can be dominated by $2 H \frac{1}{t}\Prob (X^x(t) \in
  \Nz_\delta\setminus\{x\})$. Since $\sum_{y \in \Nz_\delta} \lambda_y
  < \infty$ (by Remark \ref{obs:restric-truncados}), we can use
  Proposition \ref{prop:taxa-x-y} to get that this quantity vanishes as $t \searrow 0$.

  The first term of \eqref{eq:gerador-x-1} can be dominated by:
  \begin{align*}
    (h(\infty) - h(x) + \epsilon) \frac{\Prob(X^x (t) \not\in
      \Nz_\delta)}{t}
    &=
    (h(\infty) - h(x) + \epsilon) \frac{1 - \Prob(X^x (t) \in
      \Nz_\delta)}{t}\\
    &=
    (h(\infty) - h(x) + \epsilon)
    \left[
      \frac{1 - p_{x x}(t)}{t} -
      \frac{\Prob(X^x (t) \in \Nz_\delta\setminus\{x\})}{t}
    \right].
  \end{align*}

  We can again use Proposition \ref{prop:taxa-x-y} to show that the
  second term inside the brackets converges to zero, while the first
  term converges to $1/\gamma_x$ by Proposition
  \ref{prop:taxa-x-x}.
\end{proof}

\begin{definicao}
  \label{def:visitas}
  For $x \in \Nz$, let us denote by ``$X(t) = x$ first visit'' the
  event that ``at time $t$, $X$ is at its first visit to $x$'';
  more precisely, making $L_0^x\equiv0$ and $H_i^x=\inf\{t>L_{i-1}^x:X(t)=x\}$, 
  $L_i^x=\inf\{t>H_{i}^x:X(t)\ne x\}$, $i\geq1$, then $\{X(t) = x
  \text{ first visit}\} = \{ X(t) = x,\,H_2^x>t  \}$.

  Conversely, ``X(t) = x not first visit'' is the event $\{X(t) = x\}
  \setminus \{X(t) = x \text{ first visit}\}$.
\end{definicao}

\begin{lema}
  \label{lema:gerador-2visitas}
  If $h \in \DD$ and $\sup_{x\in\Nz}\lambda_x < \infty$ then:
  \begin{equation}
    \label{eq:gerador-2visitas}
    \lim_{t \searrow 0} \frac{1}{t} \sum_{x \in \Nz} |h(x)-h(\infty)|
    \Prob\left(
      X^\infty(t) = x \text{ not first visit}
    \right) = 0
  \end{equation}
\end{lema}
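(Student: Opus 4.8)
The plan is to estimate the probability that the process, started at $\infty$, is at time $t$ at a state $x$ that it has visited before, and to show that the contribution of all such events (weighted by $|h(x)-h(\infty)|$) is $o(t)$. The key point is that to be on a ``not first visit'' to $x$ at time $t$, the process must have completed a full excursion: visit $x$, leave $x$ (an exponential waiting time of mean $\gamma_x$), return to $\infty$, and come back to $x$ again — all before time $t$. Each passage through $x$ consumes an independent exponential clock of mean $\gamma_x$ inside $\Gamma^\infty$, so the event $\{X^\infty(t) = x \text{ not first visit}\}$ forces at least two marks $\sigma_i^x$ with $\Gamma^\infty(\sigma_i^x) \le t$, and in particular $\sigma_2^x < \Xi(t)$ where $\Xi$ is the right-continuous inverse of $\Gamma^\infty$, together with $\gamma_x(T_1^x + T_2^x) \le t$. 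I would first write $\Prob(X^\infty(t) = x \text{ not first visit}) \le \Prob(\sigma_2^x \le \Xi(t),\ \gamma_x(T_1^x+T_2^x) \le t)$ and, since $\sigma_2^x$ is a sum of two independent exponentials of rate $\lambda_x$ independent of the $T_i^x$, bound this by $\Prob(\gamma_x(T_1^x+T_2^x)\le t)\le (t/\gamma_x)^2/2$ using the Gamma$(2,1)$ density, crudely; but to get the sum over $x$ under control we also need the factor $\Prob(\sigma_2^x \le \Xi(t))$, which is small for $x$ with small $\lambda_x$.

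More carefully: I would split $\Nz$ into states with $\gamma_x$ large and $\gamma_x$ small. For a fixed $\eta>0$, on $\Nz_\eta = \{\gamma_x\ge\eta\}$ there are finitely many $x$ with $\sum_{x\in\Nz_\eta}\lambda_x<\infty$ (Remark~\ref{obs:trunc-lambda}); for each such $x$, $\Prob(X^\infty(t)=x\text{ not first visit}) = o(t)$ by essentially the argument of Proposition~\ref{prop:taxa-x-y} (the event forces $\Gamma^\infty(\sigma_1^x)\le t$, whose probability is $o(t)$), and since $|h(x)-h(\infty)|$ is bounded and the index set is finite, that whole piece is $o(t)$. For $x\notin\Nz_\eta$, i.e.\ $\gamma_x<\eta$, I would use that $|h(x)-h(\infty)|/\gamma_x$ is bounded near $\infty$ (the limit in the definition of $\DD$ exists), say $|h(x)-h(\infty)|\le C\gamma_x$ for $\gamma_x<\eta$ with $\eta$ small, so the tail sum is at most
\begin{equation}
  \frac{C}{t}\sum_{x:\gamma_x<\eta} \gamma_x \Prob\bigl(X^\infty(t)=x\text{ not first visit}\bigr)
  \le \frac{C}{t}\sum_{x:\gamma_x<\eta}\gamma_x\,\Prob\bigl(\gamma_x(T_1^x+T_2^x)\le t,\ \sigma_2^x\le\Xi(t)\bigr).
\end{equation}
Here I would use $\sup_x\lambda_x<\infty$ (the hypothesis of the lemma) and the independence of $\sigma_2^x$ from the $T_i^x$ to bound $\Prob(\gamma_x(T_1^x+T_2^x)\le t,\ \sigma_2^x\le\Xi(t))$; since $\gamma_x(T_1^x+T_2^x)\le t$ already gives a factor $(t/\gamma_x)^2/2$, the tail sum is at most $\tfrac{C}{t}\sum_{x:\gamma_x<\eta}\gamma_x\cdot \tfrac{t^2}{2\gamma_x^2}\Prob(\sigma_2^x\le\Xi(t)) = \tfrac{Ct}{2}\sum_{x:\gamma_x<\eta}\tfrac{1}{\gamma_x}\Prob(\sigma_2^x\le\Xi(t))$, which still needs the $\Prob(\sigma_2^x\le\Xi(t))$ factor to be summable against $1/\gamma_x$.

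The main obstacle — and the step I expect to require the real work — is controlling $\sum_x \gamma_x^{-1}\Prob(\sigma_2^x\le\Xi(t))$, or more precisely organising the estimate so that summing over all $x$ with $\gamma_x<\eta$ yields something $o(1)$ as $t\searrow0$. The natural device is to condition on $\Xi(t)$ (equivalently on $\Gamma^\infty$): given $\Xi(t)=u$, $\sigma_2^x\le u$ has probability at most $(\lambda_x u)^2/2 \le (\Lambda_0 u)^2/2$ with $\Lambda_0=\sup_x\lambda_x$, and simultaneously the constraint $\gamma_x(T_1^x+T_2^x)\le t$ together with the fact that these two $T_i^x$-terms are summands of $\Gamma^\infty$ itself forces $u$ to be small. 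I expect one must couple the ``$\Gamma^\infty$ small'' and ``many marks $\sigma_i^x$ early'' events rather than bounding them by independence, mirroring how Proposition~\ref{prop:taxa-inf-x} and Proposition~\ref{prop:trans-c1} exploit $\Gamma^\infty(s)\to0$. Concretely I would fix a small $s$ with $\Prob(\Gamma^\infty(s)\ge t)$ controlled, use $\Xi(t)\le s$ off an event of probability $o(t)$ (uniformly, this is where $c=0$ and $\sum\lambda_x\gamma_x<\infty$ enter, via $\theta_\delta(t)\to0$ as in Proposition~\ref{prop:trans-c0}), and on $\{\Xi(t)\le s\}$ bound $\sum_{x:\gamma_x<\eta}\Prob(\sigma_2^x\le s) \le \tfrac12 s^2\sum_x\lambda_x^2$, which is not summable in general — so instead I would keep the $\gamma_x$ weight and bound $\sum_{x:\gamma_x<\eta}\gamma_x^{-1}\cdot\gamma_x(T_1^x+T_2^x)\ind\{\sigma_2^x\le s\}\cdot(\dots)$, i.e.\ reorganise to make the convergent series $\sum_x\lambda_x\gamma_x$ appear. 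Getting the bookkeeping right so that the final bound is $o(t)$ using only $\sum_x\lambda_x\gamma_x<\infty$, $\sup_x\lambda_x<\infty$, and $|h(x)-h(\infty)|=O(\gamma_x)$ is the crux; everything else is dominated convergence and the already-established $o(t)$ estimates for finitely many ``large-$\gamma$'' states.
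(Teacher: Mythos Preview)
Your proposal correctly identifies the split into large and small $\gamma_x$ and the use of $|h(x)-h(\infty)|=O(\gamma_x)$ from $h\in\DD$, but it does not close: you yourself flag as ``the crux'' the estimate on $\sum_x \gamma_x^{-1}\Prob(\sigma_2^x\le\Xi(t))$, and the devices you sketch (condition on $\Xi(t)$, bound $\Prob(\sigma_2^x\le s)\le(\lambda_x s)^2/2$) do not work. First, the decomposition $\{X^\infty(t)=x\text{ not first visit}\}\subset\{\gamma_x(T_1^x+T_2^x)\le t\}\cap\{\sigma_2^x\le\Xi(t)\}$ is awkward because $\Xi(t)$ depends on all the $T_i^y$ and $\sigma_i^y$, so the two events are not independent and you cannot factor the probability. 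Second, even with $\sup_x\lambda_x<\infty$, a bound of the form $\sum_x\lambda_x^2$ is infinite in general, so that route cannot produce $o(1)$.

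The paper sidesteps both problems by a different containing event and an exponential Markov bound. Writing $\Gamma^{<x>}(t)=\sum_{z\ne x}\sum_{i=1}^{N_z(t)}\gamma_z T_i^z$, one has
\[
\{X^\infty(t)=x\text{ not first visit}\}\subset\{\Gamma^{<x>}(\sigma_2^x)\le t\}\cap\{\gamma_x T_1^x\le t\},
\]
and these two events \emph{are} independent. Then $\Prob(\Gamma^{<x>}(\sigma_2^x)\le t)\le e\,\E\bigl[e^{-\Gamma^{<x>}(\sigma_2^x)/t}\bigr]$, and the Laplace transform factors (stationary independent increments of $\Gamma^{<x>}$, $\sigma_2^x$ a sum of two exponentials) to give a bound $\le\text{const}\,\lambda_x^2\bigl(\sum_y\lambda_y\gamma_y/(t+\gamma_y)\bigr)^{-2}$. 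This produces a factor going to zero that is \emph{uniform in $x$} and leaves $\sum_x|h(x)|\lambda_x\,(1-e^{-t/\gamma_x})/t$ to control; the latter is handled by splitting at $\Nz_t=\{\gamma_x\ge t\}$ (a $t$-dependent cut, not a fixed $\eta$), using $s_t:=\sum_{x\in\Nz_t}\lambda_x\to\infty$ on one piece and $\sum_x\lambda_x\gamma_x<\infty$ on the other. The Laplace-transform step is the missing idea in your plan.
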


\begin{proof}

  Throughout this proof we will always assume that the process started
  at $\infty$ and omit this index of our notations. We will also
  assume, without loss of generality, that $h(\infty) = 0$.
  For $x\in\Nz$ and $t\geq0$, let
  \begin{equation}
    \label{eq:Gx}
   \Gamma^{<x>}(t)=\sum_{z \in \Nz\setminus\{x\}} \sum_{i = 1}^{N_z(t)} \gamma_z T^z_i.
  \end{equation}

  For arbitrary $x \in \Nz$ and $t > 0$:
  \begin{align*}
    \Prob\left(\Gamma^{<x>}(\sigma^x_2) \leq t \right)
    &= \Prob\left(
      e^{-\frac{\Gamma^{<x>}(\sigma^x_2)}{t}} \geq e^{-1} \right)
    \leq e \,\E \exp\left\{
      -\frac{\Gamma^{<x>}(\sigma^x_2)}{t}
    \right\}.
  \end{align*}

  Since $\sigma^x_2$ is a sum of two independent exponential random
  variables and $\Gamma$ have stationary and independents increments,
  we have that:
  \begin{align*}
    E \left[ \exp\left\{
        -\frac{\Gamma^{<x>}(\sigma^x_2)}{t}
      \right\} \right]
    = E \left[ \exp\left\{
        -\frac{\Gamma^{<x>}(\sigma^x_1)}{t}
      \right\} \right]^2\\
    = \left(
      1 + \frac{1}{\lambda_x} \sum_{y \neq x} \frac{\lambda_y
        \gamma_y}{t + \gamma_y}
    \right)^{-2}
    \leq \left(
      \frac{1}{\lambda_x} \sum_{y \in \Nz} \frac{\lambda_y
        \gamma_y}{t + \gamma_y}
    \right)^{-2}
  \end{align*}

  The event $\{X^\infty(t) = x \text{ not first visit}\}$ is
  contained in the event $\{ \Gamma^{<x>}(\sigma^x_2) \leq t\} \cap \{
  \gamma_x T^x_1 \leq t\}$, which are independent. Thus, since
  $\sup_x \lambda_x < \infty$:
  \begin{align}
    \label{eq:gerador-2visitas-inter}
    \frac{1}{t} \sum_{x \in \Nz} |h(x)|
    \Prob\left(
      X^\infty(t) = x \text{ not first visit}
    \right) &\leq \textrm{const}
    \left(
      \sum_{y \in \Nz} \frac{\lambda_y
        \gamma_y}{t + \gamma_y}
    \right)^{-2}
    \sum_{x \in \Nz} |h(x)| \lambda_x 
    \frac{1 - e^{-t/\gamma_x}}{t}
  \end{align}

  The right hand side of this inequality (except for the constant) can
  be written as:
  \begin{align}
    \label{eq:gerador-2visitas-1}
    &
    \left(
      \sum_{y \in \Nz} \frac{\lambda_y
        \gamma_y}{t + \gamma_y}
    \right)^{-2}
    \sum_{x \in \Nz_t} |h(x)| \lambda_x 
    \frac{1 - e^{-t/\gamma_x}}{t}\\
    \label{eq:gerador-2visitas-2}
    +&
    \left(
      \sum_{y \in \Nz} \frac{\lambda_y
        \gamma_y}{t + \gamma_y}
    \right)^{-2}
    \sum_{x \not\in \Nz_t} |h(x)| \lambda_x 
    \frac{1 - e^{-t/\gamma_x}}{t}
  \end{align}

  Let us denote by $s_t = \sum_{x \in \Nz_t} \lambda_x$. Note that
  $s_t < \infty$ for every $t > 0$ but $s_t \to \infty$ as $t \searrow
  0$.

  To control \eqref{eq:gerador-2visitas-1}, we will use that
  $(1-e^{-t/\gamma_x})/t \leq 1/\gamma_x$ and that $|h(x)|/\gamma_x$
  is bounded by a constant (since $h \in \DD$).
  With these considerations we can bound \eqref{eq:gerador-2visitas-1}
  by:
  \begin{align*}
    \left(
      \sum_{y \in \Nz_t} \frac{\lambda_y
        \gamma_y}{t + \gamma_y}
    \right)^{-2}
    \sum_{x \in \Nz_t} \frac{|h(x)|}{\gamma_x} \lambda_x 
    &\leq
    \left(
      \sum_{y \in \Nz_t} \frac{\lambda_y
        \gamma_y}{2 \gamma_y}
    \right)^{-2}
    \sum_{x \in \Nz_t} \textrm{const } \lambda_x\\
    &= \textrm{const } \frac{s_t}{s_t^2} = 
    \textrm{const } \frac{1}{s_t}
    \xrightarrow{t\searrow 0} 0 
  \end{align*}

  To control \eqref{eq:gerador-2visitas-2}, we will use that
  $1-e^{-t/\gamma_x} \leq 1$ and $|h(x)| \leq \textrm{const}\,
  \gamma_x$. Thus we can bound \eqref{eq:gerador-2visitas-2}
  (except for a constant factor) by:
  \begin{align*}
    \left(
      \sum_{y \in \Nz} \frac{\lambda_y
        \gamma_y}{t + \gamma_y}
    \right)^{-2}
    \sum_{x \not\in \Nz_t}  \frac{\lambda_x \gamma_x}{t}
  \end{align*}

  If the sum in $x$ is bounded for every value of $t$, then this
  expression vanishes as $t\searrow 0$, otherwise we can bound
  this expression as:
  \begin{displaymath}
    \left(
      \sum_{y \in \Nz} \frac{\lambda_y
        \gamma_y}{2 t}
    \right)^{-2}
    \sum_{x \not\in \Nz_t}  \frac{\lambda_x \gamma_x}{t} = 
    4 \left( \sum_{y \in \Nz} \frac{\lambda_y \gamma_y}{t}
    \right)^{-1}
    \xrightarrow{t\searrow 0} 0
    \qedhere
  \end{displaymath}
\end{proof}

\begin{teorema}
  \label{teo:gerador-inf}
   Let us assume that $\sup_{x \in \Nz} \lambda_x <
  \infty$. Then for every $h \in \DD$ we have that:
  \begin{equation}
    \label{eq:gerador-inf}
    \lim_{t \searrow 0} \frac{\E \left[ h(X^\infty(t))\right] -
      h(\infty)}{t} =
    \lim_{x \to \infty}
    \frac{h(\infty) - h(x)}{\gamma_x}.
  \end{equation}
  %where the right hand limit is taken over any sequence in $\Nz$ where
  %$\gamma_x \to 0$.
\end{teorema}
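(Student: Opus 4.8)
The plan is to exploit the three defining conditions of $\DD$ together with the fact that, since $c=0$, Proposition \ref{prop:trans-c0} forces $p_{\infty\infty}(t)=0$ for $t>0$; combined with $\sum_{x\in\Nz}p_{\infty x}(t)=1$ this gives
\[
  \E\!\left[h(X^\infty(t))\right]-h(\infty)=\sum_{x\in\Nz}g(x)\,p_{\infty x}(t),\qquad g(x):=h(x)-h(\infty).
\]
Write $b:=\lim_{x\to\infty}g(x)/\gamma_x$, which exists and is finite by the third condition in \eqref{eq:classe-D}; note also that $x\mapsto g(x)/\gamma_x$ is bounded (within $\epsilon$ of $b$ once $\gamma_x$ is small, and bounded on $\Nz_\delta$ by $(\sup_z|g(z)|)/\delta$). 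The goal is to prove $\tfrac1t\sum_x g(x)p_{\infty x}(t)\to -b$ as $t\searrow0$. I first split each $p_{\infty x}(t)$ into its ``first visit'' and ``not first visit'' parts as in Definition \ref{def:visitas}; since $\sup_x\lambda_x<\infty$, Lemma \ref{lema:gerador-2visitas} shows that the not-first-visit contribution, even weighted by $|g(x)|$ and divided by $t$, tends to $0$. So it suffices to compute $\lim_{t\searrow0}\tfrac1t\sum_x g(x)\,\Prob\big(X^\infty(t)=x\ \text{first visit}\big)$.

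For the first-visit probability I use $\{X^\infty(t)=x\ \text{first visit}\}=\{\Gamma^\infty(\sigma_1^x-)\le t<\Gamma^\infty(\sigma_1^x)\}$. Set $A_x:=\Gamma^\infty(\sigma_1^x-)=\sum_{z\neq x}\sum_{i=1}^{N_z(\sigma_1^x)}\gamma_z T^z_i$; this is independent of $T^x_1$, and conditioning on $\sigma_1^x$ gives $\E[e^{-\beta A_x}]=\lambda_x/(\lambda_x+\psi_x(\beta))$, where $\psi_x(\beta):=\beta\sum_{z\neq x}\lambda_z\gamma_z/(1+\beta\gamma_z)$ --- this is exactly the function $\phi$ computed in the proof of Proposition \ref{prop:taxa-inf-x} (with $c=0$). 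Since $\Gamma^\infty(\sigma_1^x)=A_x+\gamma_xT^x_1$,
\[
  \Prob\big(X^\infty(t)=x\ \text{first visit}\big)=\E\!\left[\ind\{A_x\le t\}\,e^{-(t-A_x)/\gamma_x}\right],
\]
whose Laplace transform in $t$ equals $\lambda_x\gamma_x/\big[(\lambda_x+\psi_x(\beta))(1+\beta\gamma_x)\big]=\lambda_x\gamma_x/\big[\psi(\beta)(1+\beta\gamma_x)+\lambda_x\big]$, where $\psi(\beta):=\beta\sum_z\lambda_z\gamma_z/(1+\beta\gamma_z)$ and I used the identity $\lambda_x+\psi_x(\beta)=\psi(\beta)+\lambda_x/(1+\beta\gamma_x)$. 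Summing the analogous quantities over all visits ($i\ge1$) --- a geometric series whose ratio telescopes via the same identity --- gives
\[
  \int_0^\infty e^{-\beta t}\,p_{\infty x}(t)\,dt=\frac{\lambda_x\gamma_x}{\psi(\beta)\,(1+\beta\gamma_x)},
\]
and Lemma \ref{lema:gerador-2visitas} guarantees that replacing the first-visit kernel by this one changes $\tfrac1t\sum_x g(x)(\cdot)$ negligibly.

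The heart of the matter is the second condition in \eqref{eq:classe-D}, namely $\sum_x g(x)\lambda_x=0$. Summing $\int_0^\infty e^{-\beta t}p_{\infty x}(t)\,dt$ against $g(x)$ --- permissible since $\sum_x|g(x)|\lambda_x\gamma_x\le(\sup_z|g(z)|)\sum_x\lambda_x\gamma_x<\infty$ --- and writing $w_x(\beta):=\lambda_x\gamma_x/(1+\beta\gamma_x)$, so that $\psi(\beta)=\beta\sum_x w_x(\beta)$, one obtains, using $\gamma_x/(1+\beta\gamma_x)=\beta^{-1}\big(1-(1+\beta\gamma_x)^{-1}\big)$ and $\sum_x g(x)\lambda_x=0$,
\[
  \int_0^\infty e^{-\beta t}\Big(\E[h(X^\infty(t))]-h(\infty)\Big)\,dt
  =\frac{1}{\psi(\beta)}\sum_x\frac{g(x)\lambda_x\gamma_x}{1+\beta\gamma_x}
  =-\frac{1}{\beta^2}\cdot\frac{\sum_x\frac{g(x)}{\gamma_x}\,w_x(\beta)}{\sum_x w_x(\beta)}.
\]
By \eqref{eq:soma-lambda-gamma}, $\psi(\beta)\uparrow\infty$ as $\beta\to\infty$ (while $\psi(\beta)/\beta\downarrow0$ by dominated convergence); consequently the weights $w_x(\beta)$ concentrate on $\{\gamma_x<\delta\}$ for every $\delta>0$, since $\sum_{x\in\Nz_\delta}w_x(\beta)\le\beta^{-1}\sum_{x\in\Nz_\delta}\lambda_x$ is finite (Remark \ref{obs:trunc-lambda}) while $\sum_x w_x(\beta)=\psi(\beta)/\beta$. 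As $g(x)/\gamma_x\to b$ when $\gamma_x\to0$ and $g(x)/\gamma_x$ is bounded, the weighted average above tends to $b$, so the Laplace transform is $\sim -b/\beta^2$ as $\beta\to\infty$. The remaining --- and main --- obstacle is the Tauberian inversion: passing from this to $\tfrac1t\big(\E[h(X^\infty(t))]-h(\infty)\big)\to -b$. Since $t\mapsto\E[h(X^\infty(t))]-h(\infty)$ need not be monotone, one must supply a side condition (e.g. a direct oscillation/regularity bound on this function) or, equivalently, recast the argument in real time by splitting $\sum_x$ according to whether $\gamma_x$ is large or small compared with $t$ and estimating $\Prob(X^\infty(t)=x\ \text{first visit})$ on each range from the formula above. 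In either route the point requiring care is that the cancellation $\sum_x g(x)\lambda_x=0$ is genuinely needed and cannot be bypassed: one cannot decompose $h$ into its ``linear part'' $h(\infty)+b\gamma_x$ plus a remainder, because for the linear part alone $\tfrac1t\big(\E[\,\cdot\,]-h(\infty)\big)$ diverges in general; only after rewriting $\sum_{\gamma_x\gg t}g(x)\lambda_x=-\sum_{\gamma_x\lesssim t}g(x)\lambda_x\approx b\sum_{\gamma_x\lesssim t}\gamma_x\lambda_x$ do the $\Theta(t)$-order contributions from the regions $\gamma_x\gg t$ and $\gamma_x\ll t$ combine to the finite limit $-b$.
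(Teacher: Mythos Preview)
Your Laplace-transform calculation is correct up to the asymptotic $\int_0^\infty e^{-\beta t}\big(\E[h(X^\infty(t))]-h(\infty)\big)\,dt\sim -b/\beta^2$, and you have correctly identified where the argument is incomplete: you never carry out the Tauberian inversion. Karamata's theorem in the form used in the paper (Feller XIII.5) applies to \emph{distribution functions}, i.e.\ nondecreasing functions; the map $t\mapsto\E[h(X^\infty(t))]-h(\infty)$ has no reason to be monotone, and the extra regularity/slow-oscillation hypothesis you allude to is not supplied. Saying ``one must supply a side condition \dots\ or recast the argument in real time by splitting $\sum_x$'' is a description of what remains to be done, not a proof; the last paragraph explains why the naive decomposition fails but does not execute either proposed route.

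The paper's proof avoids this obstacle precisely by \emph{not} taking the Laplace transform of the full quantity. It uses the cancellation $\sum_x\bar h(x)\lambda_x=0$ in real time: fixing a reference site $a\in\Nz$, it subtracts $\lambda_x\,\Prob(X(t)=a\ \text{first visit})/\lambda_a$ from each first-visit term, then splits according to $\{\sigma_1^x\lessgtr\sigma_1^a\}$. After showing the cross terms are $o(1)$ and integrating by parts, the surviving piece is bounded above and below by $-(L\pm\epsilon)\,G(t)/t$, where
\[
  G(t)=\sum_{x\notin\Nz_\delta\setminus\{a\}}\frac{\lambda_x\gamma_x}{\lambda_a+\lambda_x}\int_0^t F_x(s)\,\tfrac{1}{\gamma_x}e^{-(t-s)/\gamma_x}\,ds
\]
is a \emph{genuine distribution function} (a positive combination of convolutions of distribution functions with densities). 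Karamata then applies to $G$ without any extra hypothesis, giving $G(t)/t\to1$. The whole point of the reference-site manoeuvre is to manufacture this monotone $G$; your direct Laplace approach loses exactly this structure, which is why you get stuck.
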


\begin{proof}
  Let us denote by $\bar{h}(x) = h(x) - h(\infty)$ and let $L =
  \lim_{x \to \infty} \frac{\bar{h}(x)}{\gamma_x}$. Without loss of
  generality let us suppose that $L \geq 0$.

  \begin{align}
    \label{eq:gerador-inf-1}
    \frac{E \left[ h(X^\infty(t))\right] - h(\infty)}{t}
    &= \frac{1}{t}
    \sum_{x \in \Nz} \bar{h}(x) \Prob(X(t) = x \text{ first visit})
    + \frac{1}{t}
    \sum_{x \in \Nz} \bar{h}(x) \Prob(X(t) = x \text{ not first visit})
  \end{align}
  (recall Definition \ref{def:visitas}).
  %where, for $x \in \Nz$, ``$X(t) = x$ first visit'' means ``at time $t$, $X$ is at its first visit to $x$'';
  %more precisely, making $L_0^x\equiv0$ and $H_i^x=\inf\{t>L_{i-1}^x:X(t)=x\}$, 
  %$L_i^x=\inf\{t>H_{i}^x:X(t)\ne x\}$, $i\geq1$, then ``$X(t) = x$ first visit'' means ``$X(t) = x,\,H_2^x>t$''.

  Note that the second term of right hand side of \eqref{eq:gerador-inf-1} vanishes as
  $t \searrow 0$ by Lemma \ref{lema:gerador-2visitas}.
  Since $\sum_x \bar{h}(x) \lambda_x = 0$, if we fix an arbitrary $a
  \in \Nz$, then we can write the first term as:
  \begin{align}
    &\notag
    \frac{1}{t} \sum_{x \in \Nz}
    \bar{h}(x) \Prob(X(t) = x \text{ first visit})
    - \frac{1}{t} \sum_{x \in \Nz}
    \bar{h}(x) \lambda_x \frac{\Prob(X(t) = a \text{ first
        visit})}{\lambda_a} \\
    \notag
    =&
    \frac{1}{t} \sum_{x \in \Nz\setminus \{a\}}
    \frac{\bar{h}(x)}{\lambda_a} \left[
      \lambda_a \Prob(X(t) = x \text{ first visit}) -
      \lambda_x \Prob(X(t) = a \text{ first visit})
    \right]\\
    \label{eq:gerador-inf-2}
    =&
    \frac{1}{t} \sum_{x \in \Nz\setminus \{a\}}
    \frac{\bar{h}(x)}{\lambda_a} \left[
      \lambda_a \Prob(X(t) = x \text{ first time}, \sigma^x_1 < \sigma^a_1) -
      \lambda_x \Prob(X(t) = a \text{ first visit}, \sigma^a_1 < \sigma^x_1)
    \right]\\
    \label{eq:gerador-inf-3}
    +&
    \frac{1}{t} \sum_{x \in \Nz\setminus \{a\}}
    \frac{\bar{h}(x)}{\lambda_a} \left[
      \lambda_a \Prob(X(t) = x \text{ first visit}, \sigma^x_1 > \sigma^a_1) -
      \lambda_x \Prob(X(t) = a \text{ first visit}, \sigma^a_1 > \sigma^x_1)
    \right].
  \end{align}
  
  We can bound the absolute value of \eqref{eq:gerador-inf-3} by:
  \begin{align}
    \label{eq:gerador-inf-erro1-1}
    &\frac{1}{t} \sum_{x \in \Nz\setminus \{a\}}
    |\bar{h}(x)| \Prob(X(t) = x \text{ first visit}, \sigma^x_1 >
    \sigma^a_1)\\
    \label{eq:gerador-inf-erro1-2}
    +& \frac{1}{\lambda_a}
    \frac{1}{t} \sum_{x \in \Nz\setminus \{a\}}  |\bar{h}(x)| 
    \lambda_x \Prob(X(t) = a \text{ first visit}, \sigma^a_1 > \sigma^x_1)
  \end{align}

  Analogously to the proof of Lemma \ref{lema:gerador-2visitas}, we
  can bound \eqref{eq:gerador-inf-erro1-1} by:
  \begin{align}\notag
    \frac{1}{t} \sum_{x\in\Nz\setminus\{a\}}
    |\bar{h}(x)| \Prob(\Gamma^{<x, a>}(\sigma^x_1) \leq t, \gamma_a
    T^1_a \leq t) &\leq
    \text{const}  \sum_{x\in\Nz\setminus\{a\}}  |\bar{h}(x)|
    \frac{1-e^{-\frac{t-s}{\gamma_a}}}{t} \left(
      \frac{1}{\lambda_x} \sum_{y\in \Nz\setminus\{a\}}
      \frac{\lambda_y \gamma_y}{t+\gamma_y}
    \right)^{-1}\\\label{eq:gxa}
    &\leq
    \text{const}  \sum_{x\in\Nz\setminus\{a\}}  |\bar{h}(x)|\lambda_x
    \left(
      \sum_{y\in \Nz\setminus\{a\}}
      \frac{\lambda_y \gamma_y}{t+\gamma_y}
    \right)^{-1},
  \end{align}
  where
  \begin{equation}
    \label{eq:Gxa}
   \Gamma^{<x,a>}(t)=\sum_{z \in \Nz\setminus\{x,a\}} \sum_{i = 1}^{N_z(t)} \gamma_z T^z_i.
  \end{equation}
  The quantity on the right hand side of~(\ref{eq:gxa}) vanishes as $t
  \searrow 0$ by the dominated convergence theorem.

  Given that $\sigma^a_1 > \sigma^x_1$, then $\sigma^a_1$ has the
  same distribution as a sum of two independent exponential random
  variables, of rates $\lambda_x+\lambda_a$ and $\lambda_a$. Using
  this fact we can bound \eqref{eq:gerador-inf-erro1-2} by:
  \begin{align*}
    &\frac{1}{\lambda_a} \frac{1}{t}
    \sum_{x \in \Nz\setminus \{a\}}  |\bar{h}(x)| \lambda_x
    \Prob(
    \Gamma^{<x, a>}(\sigma^a_1) \leq t,
    \gamma_x T^x_1 \leq t, \sigma^a_1 > \sigma^x_1)
    \\\leq& \text{ const}
    \sum_{x \in \Nz\setminus \{a\}}  |\bar{h}(x)| \lambda_x
    \frac{1 - e^{-t/\gamma_x}}{t}
    \frac{\lambda_x}{\lambda_a+\lambda_x}
    \left(
      \frac{1}{\lambda_x+\lambda_a} \sum_{y\in \Nz\setminus\{a\}}
      \frac{\lambda_y \gamma_y}{t+\gamma_y}
    \right)^{-1}
    \left(
      \frac{1}{\lambda_a} \sum_{y\in \Nz\setminus\{a\}}
      \frac{\lambda_y \gamma_y}{t+\gamma_y}
    \right)^{-1}\\
    \leq &\text{ const}
    \left(
      \sum_{y\in \Nz\setminus\{a\}}
      \frac{\lambda_y \gamma_y}{t+\gamma_y}
    \right)^{-2}
    \sum_{x \in \Nz\setminus \{a\}}  |\bar{h}(x)| \lambda_x
    \frac{1 - e^{-t/\gamma_x}}{t}.
  \end{align*}
  Disregarding constants, the latter quantity is exactly \eqref{eq:gerador-2visitas-inter}, 
  which showed up at an intermediate step of the proof of Lemma
  \ref{lema:gerador-2visitas}. Following that proof, we conclude
  that \eqref{eq:gerador-inf-erro1-2} vanishes as $t\searrow 0$.

  Suppose that $S_x$ is a exponential random variable with rate
  $\lambda_a + \lambda_x$, independent of the process.  If we call the
  distribution function and the density of $\Gamma^{<x,a>}(S_x)$ (as
  in \eqref{eq:Gxa}) by
  $F_x$ and $f_x$ respectively, then we can write
  \eqref{eq:gerador-inf-2} as:
  \begin{align}
    \notag&
    \frac{1}{t} \sum_{x \in \Nz\setminus \{a\}}
    \bar{h}(x)\frac{\lambda_x}{\lambda_a+\lambda_x} 
    \int_0^t f_x (s) \left[
      e^{-\frac{t-s}{\gamma_x}} -
      e^{-\frac{t-s}{\gamma_a}}
    \right] d s\\
    \label{eq:gerador-inf-4}
    =&
    - \frac{1}{t} \sum_{x \in \Nz\setminus \{a\}}
    \bar{h}(x)\frac{\lambda_x}{\lambda_a+\lambda_x} 
    \int_0^t f_x (s) \left[
      1 - e^{-\frac{t-s}{\gamma_x}}
    \right] d s\\
    \label{eq:gerador-inf-5}
    &+
    \frac{1}{t} \sum_{x \in \Nz\setminus \{a\}}
    \bar{h}(x)\frac{\lambda_x}{\lambda_a+\lambda_x} 
    \int_0^t f_x (s) \left[
      1 - e^{-\frac{t-s}{\gamma_a}}
    \right] d s
  \end{align}

  The absolute value of \eqref{eq:gerador-inf-5} can be bounded by:
  \begin{align*}
    \frac{1}{\lambda_a}
    \sum_{x \in \Nz\setminus \{a\}}
    |\bar{h}(x)|\lambda_x
    \int_0^t f_x (s) \frac{1 - e^{-\frac{t-s}{\gamma_a}}}{t} d s
    &\leq
   \frac{1}{\lambda_a \gamma_a}
    \sum_{x \in \Nz\setminus \{a\}}
    |\bar{h}(x)|\lambda_x F_x(t).
  \end{align*}

  This vanishes as $t \searrow 0$ by the dominated convergence theorem.

  For an $\epsilon > 0$ fixated, take $\delta > 0$ such that
  $|\frac{\bar{h}(x)}{\gamma_x} - L| < \epsilon$ whenever $\gamma_x < \delta$.
  Integrating \eqref{eq:gerador-inf-4} by parts, we obtain:
  \begin{align}
    \notag &\mbox{}\,\,\,\,\,
    - \frac{1}{t} \sum_{x \in \Nz\setminus \{a\}}
    \frac{\bar{h}(x)}{\gamma_x} \frac{\lambda_x}{\lambda_a+\lambda_x} 
    \int_0^t F_x (s)  e^{-\frac{t-s}{\gamma_x}} d s\\
    \label{eq:gerador-inf-6} &=
    - \frac{1}{t} \sum_{x \not\in \Nz_\delta\setminus \{a\}}
    \frac{\bar{h}(x)}{\gamma_x} \frac{\lambda_x}{\lambda_a+\lambda_x} 
    \int_0^t F_x (s)  e^{-\frac{t-s}{\gamma_x}} d s\\
    \label{eq:gerador-inf-7} &\quad
    - \frac{1}{t} \sum_{x \in \Nz_\delta\setminus \{a\}}
    \frac{\bar{h}(x)}{\gamma_x} \frac{\lambda_x}{\lambda_a+\lambda_x} 
    \int_0^t F_x (s)  e^{-\frac{t-s}{\gamma_x}} d s
  \end{align}

  Note that $|\bar{h}(x)|/\gamma_x$ is bounded since $h \in
  \DD$. Thus we bound the absolute value of
  \eqref{eq:gerador-inf-7} by:
  \begin{displaymath}
    \text{const} \sum_{x \in \Nz_\delta\setminus\{a\}}
    \lambda_x \frac{1}{t} \int_0^t F_x (s)  d s.
  \end{displaymath}

  This quantity vanishes as $t \searrow 0$ since
  $\Gamma^{<x,a>}(S_x)$ is a continuous positive random variable and
  $\sum_{x \in \Nz_\delta}\lambda_x < \infty$.

  We can bound \eqref{eq:gerador-inf-6} above and below by $- (L
  \pm \epsilon) G(t)/t$, where
  \begin{gather*}
    %\label{eq:gerador-inf-8-}
    G(t) := \sum_{x \not\in \Nz_\delta\setminus \{a\}}
    \frac{\lambda_x \gamma_x}{\lambda_a+\lambda_x} \int_0^t F_x (s)
    \frac{1}{\gamma_x}e^{-\frac{t-s}{\gamma_x}} d s.
  \end{gather*}

  Now we are left with showing that $G(t)/t \to 1$ as $t \searrow
  0$.

  The integral in the definition of $G$ is a probability distribution function,
  since it is a convolution of a probability distribution function and a
  probability density function. Thus $G$ is a distribution function of a
  finite measure. Our strategy now is to use Karamata's Tauberian
  Theorem (as stated in Theorem XIII.5.1 of \cite{fellerv2}), that
  relates the behavior of the $G$ near the origin with the behavior
  of its Laplace transform near $+\infty$.

  Let us denote the Laplace transform of $G$ by $\tilde{G}$. For $\beta
  > 0$, integrating by parts, we can write:
  \begin{align}
    \notag
    \tilde{G}(\beta) :=&
    \int_0^\infty e^{-\beta t} G(d t)
    = \beta \int_0^\infty e^{-\beta t} G(t) d t\\
    \label{eq:gerador-inf-8}
    =& \sum_{x \not\in \Nz_\delta\setminus \{a\}}
    \frac{\lambda_x}{\lambda_a+\lambda_x} \beta 
    \int_0^\infty e^{-t \beta}
    \int_0^t F_x (s) e^{-\frac{t-s}{\gamma_x}} d s d t.
  \end{align}

  Note that the integral in $t$ at the right hand side of
  \eqref{eq:gerador-inf-8} is the Laplace transform of the convolution
  of two functions. We can compute the Laplace transform of each one
  of them, getting:
  \begin{align*}
    \int_0^\infty e^{-\beta t} e^{-t/\gamma_x} d t &= \frac{\gamma_x}{1 + \beta \gamma_x},\\
    \int_0^\infty e^{-\beta t} F_x(t) d t = \frac{1}{\beta} \E \left[e^{- \beta \Gamma^{<x, a>} (S_x)}\right]
    &= \frac{\lambda_a+\lambda_x}{\beta} \left(\lambda_1 + \lambda_x 
    + \beta \sum_{y \in \Nz\setminus \{a, x\}}\frac{\lambda_y \gamma_y}{1+\beta \gamma_y}\right)^{-1}.
  \end{align*}

  Thus% we can compute $\tilde{G}$ by:
  \begin{align*}
    \tilde{G}(\beta) &=
    \sum_{x \not\in \Nz_\delta\setminus \{a\}} 
    \frac{\lambda_x \gamma_x}{1 + \beta \gamma_x}
    \left(
      \lambda_1 + \lambda_x + \beta \sum_{y \in \Nz\setminus \{a, x\}}
      \frac{\lambda_y \gamma_y}{1+\beta \gamma_y}
    \right)^{-1}.
  \end{align*}
  
  Since $\sup_{x \in \Nz} \lambda_x < \infty$ and $\sum_{x \in
    \Nz_\delta} \lambda_x < \infty$, we get that $\beta \tilde{G}(\beta) \to
  1$ as $\beta \to \infty$. We have thus verified the assumption of
  the Tauberian Theorem:
  \begin{displaymath}
    \frac{\tilde{G}(\alpha \beta)}{\tilde{G}(\beta)}
    \xrightarrow{\beta \to \infty} \frac{1}{\alpha},
  \end{displaymath}
  yielding that $G(t)/\tilde{G}(1/t) \to 1$ as $t \searrow 0$. With
  this we can conclude:
  \begin{displaymath}
    \frac{G(t)}{t} =
    \frac{G(t)/\tilde{G}(1/t)}{\left(\frac{1}{t}\tilde{G}(\frac{1}{t})\right)^{-1}}
    \xrightarrow{t \searrow 0} 1.
    \qedhere
  \end{displaymath}
\end{proof}

\begin{corolario}
  \label{cor:gerador-inf}
  Theorem \ref{teo:gerador-inf} also holds when $\sup_{x
    \in \Nz} \lambda_x = \infty$.
\end{corolario}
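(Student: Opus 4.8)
The plan is to reduce the statement to Theorem~\ref{teo:gerador-inf} by a weight-splitting coupling, which produces a second K process with \emph{bounded} weights whose trajectory projects exactly onto that of the original process. Given the (possibly unbounded) weights $\{\lambda_x:x\in\Nz\}$, set $k_x=\max\{1,\lceil\lambda_x\rceil\}$ and let $\Nz'$ be the countably infinite set obtained by replacing each $x\in\Nz$ with $k_x$ copies $x^{(1)},\dots,x^{(k_x)}$; define the surjection $\pi:\Nz'\to\Nz$ by $\pi(x^{(\ell)})=x$, extended by $\pi(\infty')=\infty$, and write $\Nzb'=\Nz'\cup\{\infty'\}$. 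Equip the primed space with parameters $\gamma'_{x^{(\ell)}}:=\gamma_x$, $\lambda'_{x^{(\ell)}}:=\lambda_x/k_x$, and the same constant $c=0$. Then $\lambda'_{x'}\le 1$ for every $x'$, while $\sum_{x'\in\Nz'}\lambda'_{x'}=\sum_{x\in\Nz}\lambda_x=\infty$ and $\sum_{x'\in\Nz'}\lambda'_{x'}\gamma'_{x'}=\sum_{x\in\Nz}\lambda_x\gamma_x<\infty$, so \eqref{eq:soma-lambda-gamma} holds for the primed parameters, the associated K process $X'$ is well defined, and $\sup_{x'\in\Nz'}\lambda'_{x'}<\infty$. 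Set $h':=h\circ\pi:\Nzb'\to\R$ (so $h'(\infty')=h(\infty)$); since $\gamma'_{x'}=\gamma_{\pi(x')}$ and $h'(x')-h'(\infty')=h(\pi(x'))-h(\infty)$, one checks directly against the definition \eqref{eq:classe-D} that $h'$ lies in the class $\DD'$ built from the primed parameters: boundedness and continuity are inherited from $h$, the two sums over $\Nz'$ reorganize fiber-by-fiber into the corresponding sums over $\Nz$, and $\lim_{x'\to\infty}\frac{h'(x')-h'(\infty')}{\gamma'_{x'}}=\lim_{x\to\infty}\frac{h(x)-h(\infty)}{\gamma_x}$, because $\gamma'_{x'}\to 0$ forces $\gamma_{\pi(x')}\to 0$ and the ratio is constant along fibers.

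Next I would realize $X'^{\infty'}$ and $X^\infty$ on a common (enlarged) probability space so that they agree after projection. Keep the Poisson processes $\{N_x\}$ and the exponential variables used to build $X^\infty$ in Definitions~\ref{def:Gamma}--\ref{def:processo}; for each $x$ with $k_x>1$, independently assign each mark of $N_x$ uniformly at random to one of the $k_x$ copies of $x$, and let $N'_{x^{(\ell)}}$ be the resulting thinned point process, retaining for each of its marks the exponential variable attached to the parent mark. By the splitting property of Poisson processes the $\{N'_{x'}\}$ are independent Poisson processes of rates $\lambda'_{x'}$, independent of the exponentials, so this is a genuine realization of the construction of $X'$. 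Because $\gamma'_{x'}=\gamma_{\pi(x')}$, the clock processes coincide, $\Gamma'^{\infty'}(t)=\Gamma^\infty(t)$ for all $t$, and each visit interval $[\Gamma'^{\infty'}(\sigma-),\Gamma'^{\infty'}(\sigma))$ of a copy $x^{(\ell)}$ is exactly the corresponding visit interval of $x$ for $X^\infty$; hence $\pi(X'^{\infty'}(t))=X^\infty(t)$ for every $t\ge0$, almost surely, and therefore $h'(X'^{\infty'}(t))=h(X^\infty(t))$ almost surely for every $t$.

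Finally, I would apply Theorem~\ref{teo:gerador-inf} to the K process $X'$ — legitimate since $\sup_{x'}\lambda'_{x'}<\infty$ and $h'\in\DD'$ — to obtain
\[
\lim_{t\searrow 0}\frac{\E[h'(X'^{\infty'}(t))]-h'(\infty')}{t}
=\lim_{x'\to\infty}\frac{h'(\infty')-h'(x')}{\gamma'_{x'}}.
\]
By the coupling the left-hand side equals $\lim_{t\searrow 0}\frac{\E[h(X^\infty(t))]-h(\infty)}{t}$, and by the fiber-constancy recorded above the right-hand side equals $\lim_{x\to\infty}\frac{h(\infty)-h(x)}{\gamma_x}$; this is exactly \eqref{eq:gerador-inf} for the original process.

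I expect the only delicate point to be the coupling step: one must verify carefully, directly from the construction in Section~\ref{cons}, that the Poisson thinning is a bona fide realization of $X'$ and that $\pi$ carries its trajectory pathwise onto that of $X^\infty$ — in particular that the two clock processes and all visit intervals match, and that times spent at $\infty'$ correspond to times spent at $\infty$. Everything else — the verification of \eqref{eq:soma-lambda-gamma} for the primed parameters, the membership $h'\in\DD'$, and the bookkeeping that translates the two one-sided limits back to the original objects — is routine.
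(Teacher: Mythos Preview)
Your proposal is correct and follows essentially the same approach as the paper: both split each site $x$ into $\lceil\lambda_x\rceil$ copies with equal shares of the weight (so that the new weights are bounded by $1$), couple the two K processes via Poisson splitting/superposition so that the original process is the projection of the primed one, lift $h$ to $h'=h\circ\pi$, check $h'\in\DD'$, and then invoke Theorem~\ref{teo:gerador-inf} for the bounded-weight process. Your write-up is in fact somewhat more careful than the paper's (which is marked ``Sketchy''), particularly in spelling out why the clocks coincide and why the projection carries trajectories pathwise.
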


\begin{proof} (Sketchy)
  The basic idea is to {\em break} sites with large $\lambda_x$ into
  several sites with bounded $\lambda_x$. To formalize this argument, 
  %suppose that $\Nz$
  %is a countable set and we have $\{\lambda_x, \gamma_x: x \in \Nz\}$
  %satisfying \eqref{eq:soma-lambda}, \eqref{eq:soma-lambda-gamma} and
  %$\sup_{x\in\Nz} \lambda_x = \infty$. 
  we will construct
  simultaneously a K process with the original parameters, and another
  K process related to the first, but with bounded weights.

  Take $\Nz^\prime = \{ (x, n) \in \Nz \times \N: n < \lambda_x\} $;
  this is a countable set. For $(x, n) \in \Nz^\prime$ let us define:
  \begin{gather*}
    %\label{eq:lambda-p}
    \lambda^\prime_{(x, n)} := \frac{\lambda_x}{\lceil \lambda_x
      \rceil},\quad
    %\label{eq:gamma-p}
    \gamma^\prime_{(x,n)} := \gamma_x
  \end{gather*}
  
  It can be verified that $\{\lambda^\prime_{(x, n)}, \gamma^\prime_{(x, n)}: (x, n)
  \in \Nz^\prime\}$ satisfy %\eqref{eq:soma-lambda} and
  \eqref{eq:soma-lambda-gamma}. Furthermore:
  \begin{gather*}
    %\label{eq:soma-lambda-p}
    %\notag
    \sum_{n < \lambda_x} \lambda^\prime_{(x, n)} = \lambda_x,\quad
    %\notag
    %\label{eq:lambda-p-limitada}
    \sup_{(x, n)\in \Nz^\prime}\lambda^\prime_{(x,n)} \leq 1
  \end{gather*}
  
  Since the superposition of independent Poisson processes is a Poisson
  process, we may construct $\{X(t), t \geq 0\}$ and
  $\{X^\prime(t), t \geq 0\}$, K processes started at $\infty$ in the
  same probability space such that:
  \begin{itemize}
  \item $X$ has $\Nzb$ as the state space and parameters $\{\lambda_x,
    \gamma_x: x\in \Nz\}$;
  \item $X^\prime$ has $\Nzb^\prime=\Nz^\prime\cup\{\infty\}$ as the state space and parameters
    $\{\lambda^\prime_{(x,n)}, \gamma^\prime_{(x,n)}: (x,n) \in
    \Nz^\prime\}$;
  \item $X(t)$ is the projection in the first coordinate of $X^\prime(t)$.
  \end{itemize}

  Let $\DD$ and $\DD^\prime$ be the domain of the generator as defined
  in \eqref{eq:classe-D} for the processes $X$ and $X^\prime$
  respectively.

  Take $h \in \DD$; we want to show \eqref{eq:gerador-inf}. For that
  let us define $h^\prime: \Nzb^\prime \to \R$ by $h^\prime((x, n)) =
  h(x)$, $h^\prime(\infty) = h(\infty)$. With this construction we
  have that $h(X(t)) = h^\prime(X^\prime(t))$ \qc.

  It may be readily verified that $h^\prime \in  \DD^\prime$, so we
  conclude using Theorem \ref{teo:gerador-inf} for the process
  $X^\prime$:
  \begin{align*}
    &\mbox{}\quad\quad\lim_{t \searrow 0} \frac{\E \left[ h(X(t))\right] -
      h(\infty)}{t} =
    \lim_{t \searrow 0} \frac{\E \left[ h^\prime(X^\prime(t))\right] -
      h^\prime(\infty)}{t}\\& =
    \lim_{(x,n) \to \infty}
    \frac{h^\prime(\infty) - h^\prime((x,n))}{\gamma^\prime_{(x,n)}}=
    \lim_{x \to \infty}
    \frac{h(\infty) - h(x)}{\gamma_x}.
    \qedhere
  \end{align*}

\end{proof}

\begin{teorema}
  \label{teo:gerador-suficiente}
  The operator $\AAA: \DD \to \CC_0$. Given by:
  \begin{equation}
    \label{eq:gerador-final}
    \AAA f (x) = \begin{cases}
      \frac{f(\infty) - f(x)}{\gamma_x}, & \text{if } x \in \Nz;\\
      \lim_{x \to \infty} \frac{f(\infty) - f(x)}{\gamma_x}, &
      \text{if } x = \infty,
    \end{cases}
  \end{equation}
  has a closure that is the Markov generator of the semigroup of the
  K process over $\CC_0$.
\end{teorema}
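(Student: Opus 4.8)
The plan is to work in the Banach space $(\CC_0,\|\cdot\|_\infty)$ and deduce the statement from the Lumer--Phillips theorem together with a resolvent identification. Concretely I would show: (i) $\AAA$, with domain $\DD$, is densely defined and dissipative and satisfies $(\lambda-\AAA)\DD=\CC_0$ for every $\lambda>0$, so that $\overline{\AAA}$ generates a strongly continuous contraction semigroup $(S_t)$ on $\CC_0$; and (ii) $(S_t)$ coincides with the semigroup $(\Psi_t)$ of the K process. Then $\overline{\AAA}$ is the generator of $(\Psi_t)$ over $\CC_0$ and, being the closure of $\AAA$, has $\DD$ as a core --- which is exactly the assertion.

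For (i), fix $\lambda>0$ and $g\in\CC_0$ and solve $\lambda f-\AAA f=g$. At $x\in\Nz$ this forces $f(x)=\frac{\gamma_x g(x)+f(\infty)}{1+\lambda\gamma_x}$, and once this is substituted the equation at $x=\infty$ becomes an identity in the still-free constant $f(\infty)$. Since then $f(x)-f(\infty)=\frac{\gamma_x(g(x)-\lambda f(\infty))}{1+\lambda\gamma_x}$, the mean-zero condition in \eqref{eq:classe-D} pins down $\lambda f(\infty)$ as the $\{\lambda_x\gamma_x/(1+\lambda\gamma_x)\}_x$-weighted average of $\{g(x)\}_x$ --- well defined because those weights are positive with finite total by \eqref{eq:soma-lambda-gamma}. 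One checks directly that the resulting $f$ lies in $\CC_0$ and satisfies all three defining conditions of $\DD$ (the summability one from $\sum_x\lambda_x\gamma_x<\infty$, the limit one because $\frac{f(x)-f(\infty)}{\gamma_x}=\frac{g(x)-\lambda f(\infty)}{1+\lambda\gamma_x}\to g(\infty)-\lambda f(\infty)$), and that $|f(\infty)|\le\lambda^{-1}\|g\|_\infty$ (a weighted average) whence $\|f\|_\infty\le\lambda^{-1}\|g\|_\infty$; this one computation yields both surjectivity of $\lambda-\AAA$ and dissipativity of $\AAA$. For density of $\DD$: given $g\in\CC_0$ and $\epsilon>0$, pick $\delta$ with $|g(x)-g(\infty)|<\epsilon$ for $\gamma_x<\delta$, let $f_0$ agree with $g$ on $\{\gamma_x\ge\delta\}$ and equal $g(\infty)$ on $\{\gamma_x<\delta\}\cup\{\infty\}$ (then $\|f_0-g\|_\infty<\epsilon$, and $f_0$ meets the summability and limit conditions since $\sum_{\gamma_x\ge\delta}\lambda_x<\infty$ by Remark~\ref{obs:trunc-lambda}), and repair the only possible defect $a:=\sum_x(f_0(x)-f_0(\infty))\lambda_x\ne0$ by subtracting $a/\sum_{x\in A}\lambda_x$ on a finite $A\subset\{\gamma_x<\delta\}$ with $\sum_{x\in A}\lambda_x>|a|/\epsilon$, which exists since $\sum_{\gamma_x<\delta}\lambda_x=\infty$; the result lies in $\DD$ and is within $2\epsilon$ of $g$.

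For (ii), let $R_\lambda$ be the resolvent of $(S_t)$, i.e. the map $g\mapsto f$ above, and let $U_\lambda g(x):=\E\int_0^\infty e^{-\lambda t}g(X^x(t))\,dt$ be the K-process resolvent; it suffices to prove $U_\lambda=R_\lambda$ on $\CC_0$, equivalently $\E\int_0^\infty e^{-\lambda t}\AAA f(X^x(t))\,dt=\lambda U_\lambda f(x)-f(x)$ for $f\in\DD$. I would get this from the Markov property (Theorem~\ref{teo:processo-markov}), in the form $\E[\Psi_s f(X^x(t))]=\Psi_{t+s}f(x)$, the pointwise convergence $s^{-1}(\Psi_s f(y)-f(y))\to\AAA f(y)$ as $s\downarrow0$ from Theorems~\ref{teo:gerador-x} and \ref{teo:gerador-inf} and Corollary~\ref{cor:gerador-inf}, and the uniform bound $\sup_{y\in\Nzb,\,0<s\le1}s^{-1}|\Psi_s f(y)-f(y)|<\infty$, which follows by combining the decomposition from Remark~\ref{obs:reinicia-infinito}, $\Psi_s f(y)-f(y)=(1-e^{-s/\gamma_y})(f(\infty)-f(y))+\int_0^s(\Psi_{s-u}f(\infty)-f(\infty))\gamma_y^{-1}e^{-u/\gamma_y}\,du$, with the estimate $\Psi_u f(\infty)-f(\infty)=O(u)$ coming from Theorem~\ref{teo:gerador-inf}. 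That bound licenses exchanging $\E$, $\int_0^\infty e^{-\lambda t}\,dt$ and $\lim_{s\downarrow0}$; after the exchange a short Laplace-transform computation leaves $\lambda U_\lambda f(x)-f(x)$, using also $\Psi_u f(x)\to f(x)$ as $u\downarrow0$ --- immediate for $x\in\Nz$, and true at $x=\infty$ because $X^\infty(0)=\infty$ (a consequence of $c=0$ and $\sum_x\lambda_x=\infty$) while $X^\infty$ is càdlàg. Then $U_\lambda=R_\lambda$, and inverting Laplace transforms in $t$ --- legitimate since $t\mapsto\Psi_t f(x)$ is continuous on $(0,\infty)$ and right-continuous at $0$ --- gives $\Psi_t=S_t$ for all $t$.

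The hard part will be the limit interchange in (ii), i.e. squeezing enough uniformity out of the pointwise generator identities for $f\in\DD$. The delicate point is that for $x$ with $\gamma_x\downarrow0$ the difference quotient $s^{-1}(\Psi_s f(x)-f(x))$ passes through a crossover between the ``$\Nz$-value'' $\frac{f(\infty)-f(x)}{\gamma_x}$ and the ``$\infty$-value'' $\AAA f(\infty)$; these two agree in the limit thanks to the third condition in \eqref{eq:classe-D}, and the two terms in the Remark~\ref{obs:reinicia-infinito} decomposition cancel to leading order across the crossover. Pushing that cancellation through in fact yields $\|s^{-1}(\Psi_s f-f)-\AAA f\|_\infty\to0$ for $f\in\DD$, which would also allow one to invoke directly the standard core criterion (a subspace of the generator's domain on which some $\lambda$ minus the generator has dense range is a core) and skip the resolvent identification altogether.
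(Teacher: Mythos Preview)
Your part~(i) is essentially the paper's proof: the paper appeals to the Hille--Yosida criterion (Ethier--Kurtz, Theorem~1.2.6) and verifies exactly the three items you list---density of $\DD$, dissipativity, and surjectivity of $\lambda-\AAA$---with the same explicit resolvent computation (the paper does it at $\lambda=1$). The one cosmetic difference is that you read off dissipativity from the resolvent bound $\|f\|_\infty\le\lambda^{-1}\|g\|_\infty$, while the paper argues directly: the mean-zero condition forces $\|f\|>|f(\infty)|$, so a near-extremal $y\in\Nz$ has $|f(y)|>|f(\infty)|$, whence $f(y)$ and $-\AAA f(y)=(f(y)-f(\infty))/\gamma_y$ share a sign and $|\beta f(y)-\AAA f(y)|\ge\beta|f(y)|$. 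Your density argument matches the paper's almost verbatim.

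The paper's written proof \emph{stops} after these three verifications; there is no explicit identification of the abstractly generated semigroup with $(\Psi_t)_{t\ge0}$. The intended link is through the preceding pointwise computations (Theorems~\ref{teo:gerador-x},~\ref{teo:gerador-inf}, Corollary~\ref{cor:gerador-inf}), but as you observe, those are pointwise in $x$, and closing the loop requires either your resolvent identification~(ii) or the upgrade to uniform convergence $\|s^{-1}(\Psi_s f-f)-\AAA f\|_\infty\to0$ for $f\in\DD$ followed by the core criterion. Your decomposition via Remark~\ref{obs:reinicia-infinito} and the uniform bound (using that $\sup_y|f(y)-f(\infty)|/\gamma_y<\infty$ for $f\in\DD$ and $\Psi_u f(\infty)-f(\infty)=O(u)$ from Theorem~\ref{teo:gerador-inf}) is the right way to get this; the subsequent Laplace-transform manipulation giving $U_\lambda(\lambda-\AAA)f=f$ on $\DD$, hence $U_\lambda=R_\lambda$ on $\CC_0=(\lambda-\AAA)\DD$, is correct. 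So your route is the paper's route for~(i), plus an explicit justification of a step the paper leaves to the reader for~(ii).
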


\begin{proof}
  Following Theorem 2.6 from the Chapter 1 of \cite{ethier:86},
  equipping $\CC_0$ with the \emph{supremum} norm, making it a Banach space,
  we need to verify that:
  \begin{enumerate}
  \item $\DD$ is dense in $\CC_0$;
  \item $\Vert \beta f - \AAA f \Vert \geq \beta \Vert f \Vert$ for
    every $\beta > 0$ and $f \in \DD$.
  \item The range of $I - \AAA$ is $\CC_0$, where $I$ is the
    identity operator.
  \end{enumerate}

  Let us prove each item separately:
  \begin{enumerate}
  \item Given a $g \in \CC_0$ and a $\epsilon > 0$, we want to find a
    $f \in \DD$ such that $\sup_{x \in \Nzb} |f(x) - g(x)| <
    \epsilon$. Since $g \in \CC_0$, there is a $\delta > 0$ such that
    $|g(x) - g(\infty)| < \epsilon/2$ whenever $\gamma_x < \delta$.

    Fixing such $\delta$, let us define $f(\infty) = g(\infty)$ and
    $f(x) = g(x)$ whenever $\gamma_x \geq \delta$.  By
    \eqref{eq:soma-lambda-gamma} we have
    that $\sum_{x: \gamma_x \geq \delta} \lambda_x < \infty$ and
    $\sum_{x: \gamma_x < \delta} \lambda_x = \infty$, so we can choose
    finitely many $x$ not yet chosen, and define $f(x)$ in a such a way
    that $\sum_x \lambda_x (f(x) - f(\infty)) = 0$, if we set $f(x) =
    g(\infty)$ for every other $x$. One may readily check that such an
    $f$ satisfies our conditions.
  
  \item Take $f \in \DD$ and $\beta > 0$.

    Since $\sum_{x\in \Nz} (f(x)-f(\infty)) \lambda_x = 0$, then
    $\Vert f \Vert > |f(\infty)|$, so for a fixed $\epsilon > 0$ we
    can take $y \in \Nz$ such that $|f(y)| > \Vert f \Vert - \epsilon$
    and $|f(y)| > |f(\infty)|$. This implies that $f(y)$ and $-\AAA f
    (y)$ have the same sign, so we may conclude that
    \begin{displaymath}
      \Vert \beta f - \AAA f \Vert \geq
      |\beta f(y) - \AAA f (y)| \geq
      |\beta f(y)| \geq
      \beta\Vert f \Vert - \beta\epsilon.
    \end{displaymath}

  \item Given a $g\in\CC_0$, let us define:
    \begin{align*}
      L &:=
      \left[
        \sum_{x\in\Nz} \frac{\lambda_x \gamma_x}{1+\gamma_x}
      \right]^{-1}
      \left[
        \sum_{x \in \Nz} \frac{\lambda_x \gamma_x}{1+\gamma_x} (g(x)-
        g(\infty))
      \right];\\
      f(x) &:= \begin{cases}
        g(x) \frac{\gamma_x}{1+\gamma_x} + \frac{g(\infty) +
          L}{1+\gamma_x}, & \text{if } x \in \Nz;\\
        g(\infty) + L, & \text{otherwise}.
      \end{cases}
    \end{align*}
    Direct computations verify that $f \in \DD$ and $f - \AAA f = g$.\qedhere
  \end{enumerate}
\end{proof}

\vspace{.5cm}

\noindent{\bf Acknowledgements}

This paper contain results of the Master's dissertation of the second
author, supervised by the first author.  The authors enjoyed
enlightening discussions with Milton Jara and Pierre Mathieu on the
subject and results herein.  They thank NUMEC for
hospitality. They also thank an anonymous referee for suggestions 
of improvement to an earlier version of this paper.
The work of the first author is part of USP project MaCLinC.

% Bibliografia
\singlespacing
\bibliographystyle{plainnat}
\bibliography{bibliografia}

\end{document}